\numberwithin{equation}{section}
\newtheorem{theorem}{Theorem}[section]
\newtheorem{proposition}[theorem]{Proposition}
\newtheorem{corollary}[theorem]{Corollary}
\newtheorem{lemma}[theorem]{Lemma}
\newtheorem{conjecture}[theorem]{Conjecture}
\newtheorem{problem}[theorem]{Problem}
\newtheorem{remark}[theorem]{Remark}
\theoremstyle{definition}
\newtheorem{defn}[theorem]{Definition}
\newcommand{\Hilb}{{\mathrm{Hilb}}}
\newcommand{\maj}{{\mathrm{maj}}}
\newcommand{\one}{{\mathbf{1}}}
\newcommand{\Frob}{{\mathrm{Frob}}}
\newcommand{\symm}{{\mathfrak{S}}}
\newcommand{\II}{{\mathbf{I}}}
\newcommand{\MMM}{{\mathcal{M}}}
\newcommand{\gr}{{\mathrm {gr}}}
\newcommand{\BBB}{{\mathcal{B}}}
\newcommand{\KKK}{{\mathcal{K}}}
\newcommand{\PM}{{\mathcal{PM}}}
\newcommand{\Res}{\mathrm{Res}}
\newcommand{\lds}{\mathrm{lds}}
\newcommand{\Zpoints}{\mathcal{Z}}
\newcommand{\grFrob}{{\mathrm{grFrob}}}
\newcommand{\Ind}{{\mathrm{Ind}}}
\newcommand{\CC}{{\mathbb{C}}}
\newcommand{\ZZ}{{\mathbb{Z}}}
\newcommand{\Mat}{{\mathrm{Mat}}}
\newcommand{\SYT}{\mathrm{SYT}}
\newcommand{\spa}{\mathrm{span}}
\newcommand{\xxx}{{\mathbf{x}}}
\newcommand{\zzz}{{\mathbf{z}}}
\newcommand{\mmm}{{\mathfrak{m}}}
\newcommand{\bl}{\boldsymbol{\lambda}}
\newcommand{\precdot}{\mathrel{\ooalign{$\prec$\cr
  \hidewidth\raise0.001ex\hbox{$\cdot\mkern0.6mu$}\cr}}}
\title{Involution Matrix Loci and Orbit Harmonics}
\author{Moxuan (Jasper) Liu}
\address{Department of Mathematics \\ UC San Diego \\ La Jolla, CA, 92093 \\ USA}
\email{(mol008, bprhoades)@ucsd.edu}
\author{Yichen Ma}
\address{Department of Mathematics \\ Cornell University \\ Ithaca, NY, 14853 \\ USA}
\email{ym476@cornell.edu}
\author{Brendon Rhoades}
\author{Hai Zhu}
\address{School of Mathematical Sciences \\ Peking University \\ Haidian District, Beijing 100871 \\ China}
\email{zhuhai1686@stu.pku.edu.cn}
\begin{document}

\maketitle

\begin{abstract}
    Let $\Mat_{n \times n}(\CC)$ be the affine space of $n \times n$ complex matrices with coordinate ring $\CC[\xxx_{n \times n}]$. We define graded quotients of $\CC[\xxx_{n \times n}]$ which carry an action of the symmetric group $\symm_n$ by simultaneous permutation of rows and columns. These quotient rings are obtained by applying the orbit harmonics method to matrix loci corresponding to all involutions in $\symm_n$ and the conjugacy classes of involutions in $\symm_n$ with a given number of fixed points. In the case of perfect matchings on $\{1, \dots, n\}$ with $n$ even, the Hilbert series of our quotient ring is related to Tracy-Widom distributions and its graded Frobenius image gives a refinement of the plethysm $s_{n/2}[s_2]$.
\end{abstract}

\section{Introduction}
\label{sec:Introduction}

Let $\xxx_N = (x_1, \dots, x_N)$ be a list of $N$ variables and let $\CC[\xxx_N] := \CC[x_1,\dots,x_N]$ be the polynomial ring over these variables. If $\Zpoints \subseteq \CC^N$ is a finite locus of points in affine $N$-space, we have the vanishing ideal
\begin{equation}
    \II(\Zpoints) := \{ f \in \CC[\xxx_N] \,:\, f(\zzz) = 0 \text{ for all $\zzz \in \Zpoints$} \}
\end{equation}
and an identification $\CC[\Zpoints] = \CC[\xxx_N]/\II(\Zpoints)$ of vector spaces. The method of {\em orbit harmonics} replaces $\II(\Zpoints)$ by its associated graded ideal $\gr \, \II(\Zpoints)$; we have an isomorphism
\begin{equation}
\label{eqn:orbit-harmonics-isomorphism}
    \CC[\Zpoints] = \CC[\xxx_N]/\II(\Zpoints) \cong \CC[\xxx_N]/\gr \, \II(\Zpoints) = R(\Zpoints)
\end{equation}
where the quotient $R(\Zpoints) := \CC[\xxx_N]/\gr \, \II(\Zpoints)$ is a graded $\CC$-vector space. If $\Zpoints$ is stable under the action of a finite matrix group $G \subseteq GL_N(\CC)$, then \eqref{eqn:orbit-harmonics-isomorphism} is an isomorphism of $G$-modules, where $R(\Zpoints)$ is a graded $G$-module.

Geometrically, orbit harmonics corresponds to linearly deforming the reduced locus $\Zpoints$ to a scheme of multiplicity $|\Zpoints|$ supported at the origin. This deformation is shown schematically in the picture below for a locus $\Zpoints$ of six points in the plane which is stable under the group $(\cong \symm_3)$ generated by reflections in the lines. 

\begin{center}
 \begin{tikzpicture}[scale = 0.2]
\draw (-4,0) -- (4,0);
\draw (-2,-3.46) -- (2,3.46);
\draw (-2,3.46) -- (2,-3.46);

 \fontsize{5pt}{5pt} \selectfont
\node at (0,2) {$\bullet$};
\node at (0,-2) {$\bullet$};

\node at (-1.73,1) {$\bullet$};
\node at (-1.73,-1) {$\bullet$};
\node at (1.73,-1) {$\bullet$};
\node at (1.73,1) {$\bullet$};

\draw[thick, ->] (6,0) -- (8,0);

\draw (10,0) -- (18,0);
\draw (12,-3.46) -- (16,3.46);
\draw (12,3.46) -- (16,-3.46);

\draw (14,0) circle (15pt);
\draw(14,0) circle (25pt);
\node at (14,0) {$\bullet$};

 \end{tikzpicture}
\end{center}

Orbit harmonics  has seen applications to presenting cohomology rings \cite{GP}, Macdonald theory \cite{Griffin, HRS}, cyclic sieving \cite{OR}, Donaldson-Thomas theory \cite{RRT}, and Ehrhart theory \cite{RR}. When the locus $\Zpoints$ has favorable `organization' and interesting symmetries, the quotient $R(\Zpoints)$ often has nice properties. One expects algebraic properties of $R(\Zpoints)$ to be governed by combinatorial properties of $\Zpoints$.

Let $\xxx_{n \times n} = (x_{i,j})_{1 \leq i,j \leq n}$ be an $n \times n$ matrix of variables. Consider the affine space $\Mat_{n \times n}(\CC)$ of $n \times n$ complex matrices with coordinate ring $\CC[\xxx_{n \times n}]$. The application of orbit harmonics to finite matrix loci $\Zpoints \subseteq \Mat_{n \times n}(\CC)$ was initiated by Rhoades \cite{RhoadesViennot}. He considered the locus $\Zpoints = \symm_n \subseteq \Mat_{n \times n}(\CC)$ of $n \times n$ permutation matrices. Since $\symm_n$ forms a group, this permutation matrix locus carries an action of $\symm_n \times \symm_n$ by left and right multiplication. Algebraic properties of $R(\symm_n)$ are governed by longest increasing subsequences in $\symm_n$ and the Schensted correspondence. Liu extended \cite{Liu} this work to the locus $\Zpoints = \ZZ_r \wr \symm_n$ of $r$-colored permutation matrices. 

In this paper we consider matrix loci $\Zpoints \subseteq \Mat_{n \times n}(\CC)$ which do not form a group, but are closed under permutation matrix conjugation. We consider the  locus 
\begin{equation}
    \MMM_n := \{ w \in \symm_n \,:\, w^2 = 1 \} \subseteq \Mat_{n \times n}(\CC)
\end{equation}
of involutions (or matchings). For $0 \leq a \leq n$ with $a \equiv n \mod 2$, we have the sublocus
\begin{equation}
    \MMM_{n,a} := \{ w \in \MMM_n \,:\, \text{$w$ has exactly $a$ fixed points} \}
\end{equation}
of matchings with $a$ fixed points. If $n$ is even, we write
\begin{equation}
\PM_n := \MMM_{n,0}  \quad \quad \text{($n$ even)}
\end{equation}
for the set of perfect matchings (or fixed-point-free involutions).
The loci $\MMM_n$ and $\MMM_{n,a}$ are closed under the conjugation action of the permutation matrix group $\symm_n$. The quotients $R(\MMM_n)$ and $R(\MMM_{n,a})$ are therefore graded $\symm_n$-modules. Our results on these modules are as follows.
\begin{itemize}
    \item We give explicit generating sets for the associated graded ideals $\gr \, \II(\MMM_n)$ and $\gr \, \II(\PM_n)$; see Propositions~\ref{prop:matching-ideal-equality}, \ref{prop:pm-ideal-equality}.
    \item We show (Theorem~\ref{thm:matching-monomial-basis}) that $R(\MMM_n)$ admits a basis of monomials $\mmm(w)$ indexed by matchings $w \in \MMM_n$ where 
    $$\mmm(w) := \prod_{\substack{w(i) = j \\ i \, < \, j}} x_{i,j}$$
    is the squarefree product of upper triangular variables indexed by 2-cycles in $w$.
    \item We describe the graded $\symm_n$-module structure of each of the orbit harmonics quotient rings $R(\MMM_n)$ and $R(\MMM_{n,a})$; see Theorems~\ref{thm:matching-frobenius}, \ref{thm:conjugacy-module-character}.
\end{itemize}
In the case of $R(\PM_n)$, the graded structure in the last bullet point has an especially nice form. For $n$ even, the conjugation action of $\symm_n$ on perfect matchings is well-known to have Frobenius image 
\begin{equation}
    \Frob(\PM_n) = s_{n/2}[s_2] = \sum_{\substack{\lambda \, \vdash \, n \\ \lambda \text{ even}}} s_\lambda
\end{equation}
where $s_{n/2}[s_2]$ is the plethysm of Schur functions and the sum is over partitions $\lambda \vdash n$ with only even parts. The graded refinement of this character weights this sum according to the length of the first row of $\lambda$, viz.
\begin{equation}
    \grFrob(R(\PM_n);q) = \sum_{\substack{\lambda \, \vdash \, n \\ \lambda \text{ even}}} q^{\frac{n - \lambda_1}{2}} \cdot s_\lambda
\end{equation}
where $q$ is a grading variable; see Theorem~\ref{thm:pm-frobenius}. The Hilbert series of $R(\PM_n)$ is (up to renormalization) the distribution of the longest decreasing subsequence statistic on fixed-point-free involutions in $\symm_n$; see Corollary~\ref{cor:pm-hilbert}. Baik and Rains proved \cite{BR} that the limit as $n \to \infty$ of this Hilbert series arises in random matrix theory as a flavor of the Tracy-Widom distribution (up to renormalization and reversal).

The rest of the paper is organized as follows. 
In {\bf Section~\ref{sec:Background}} we give background material on the orbit harmonics deformation, symmetric functions, and $\symm_n$-representation theory.
In {\bf Section~\ref{sec:Matching}} we prove our results on the matrix locus $\MMM_n$ of all involutions in $\symm_n$.
In {\bf Section~\ref{sec:Perfect}} we turn to the perfect matching locus $\PM_n$. {\bf Section~\ref{sec:Conjugacy}} is the most technical part of the paper; it computes the graded $\symm_n$-structure of $R(\MMM_{n,a})$. We close in {\bf Section~\ref{sec:Conclusion}} with some conjectures and open problems.

\section{Background}
\label{sec:Background}


\subsection{Orbit Harmonics}
Let $\Zpoints \subseteq \CC^N$ be a finite locus of points in affine $N$-space. As in the introduction, the  {\em vanishing ideal} $\II(\Zpoints)$ is given by
\begin{equation}
    \II(\Zpoints) := \{f \in \CC[\xxx_N] : f(\zzz) = 0 \text{ for all  $\zzz \in \Zpoints$} \}.
\end{equation}
Multivariate Lagrange interpolation gives the identification of vector spaces
\begin{equation}\label{eq:locus-vanishing quotient}
    \CC[\Zpoints] \cong \CC[\xxx_N]/\II(\Zpoints)
\end{equation}
where $\CC[\Zpoints]$ is the space of all functions $\Zpoints \to \CC$. If $\Zpoints$ is stable under the action of a finite matrix group $G \subseteq GL_N(\CC)$, the ideal $\II(\Zpoints)$ is also $G$-stable and \eqref{eq:locus-vanishing quotient} is an isomorphism of $G$-modules.

Given a nonzero polynomial $f \in \CC[\xxx_N]$, write $f = f_0 + f_1 + \dots + f_d$, where $f_i$ is homogeneous of degree $i$ and $f_d \neq 0$. Let $\tau(f)$ be the top degree homogeneous part of $f$, that is, $\tau(f) := f_d$. Given an ideal $I \subseteq \CC[\xxx_N]$, the {\em associated graded ideal} of $I$ is the homogeneous ideal
\begin{equation}
    \gr \, I := ( \tau(f) : f \in I, f \neq 0 ).
\end{equation}

\begin{remark}
    \label{rmk:gr-generation}
    Given a generating set $I = (f_1, \dots, f_r)$ of an ideal $I \subseteq \CC[\xxx_N]$, we have a containment $(\tau(f_1), \dots, \tau(f_r)) \subseteq \gr \, I$ of homogeneous ideals in $\CC[\xxx_N]$, but this containment is strict in general. 
\end{remark}

The identification of ungraded $\CC$-vector spaces in Equation~\eqref{eq:locus-vanishing quotient} extended to a vector space isomorphism
\begin{equation}\label{eq:orbit-harmonics-isomorphisms}
    \CC[\Zpoints] \cong \CC[\xxx_N]/\II(\Zpoints) \cong \CC[\xxx_N]/\gr \, \II(\Zpoints) =: R(\Zpoints).
\end{equation}
Since $\gr \, \II(\Zpoints) \subseteq \CC[\xxx_N]$ is a homogeneous ideal, $R(\Zpoints)$ has the additional structure of a graded $\CC$-vector space. If $\Zpoints$ is stable under the action of a finite matrix group $G \subseteq GL_n(\CC)$, we may regard \eqref{eq:orbit-harmonics-isomorphisms} as an isomorphism of ungraded $G$-modules, where $R(\Zpoints)$ has the additional structure of a graded $G$-module.

We will need a slightly more refined statement than the isomporphism \eqref{eq:orbit-harmonics-isomorphisms}. If $V  = \bigoplus_{d \geq 0} V_d$ is a graded vector space and $m \geq 0$, we write $V_{\leq m} := \bigoplus_{d = 0}^m V_d$. In particular, we write $\CC[\xxx_N]_{\leq m}$ for the vector space of polynomials in $\CC[\xxx_N]$ of degree $\leq m$. If $I \subseteq \CC[\xxx_N]$ is an ideal, we have a subspace $I \cap \CC[\xxx_{N}]_{\leq m} \subseteq \CC[\xxx_N]_{\leq m}$. The following result is standard; see e.g. \cite[Lem. 3.15]{RhoadesViennot}.

\begin{lemma}
    \label{lem:basis-transfer}
    Suppose $I \subseteq \CC[\xxx_N]$ is an ideal and $m \geq 0$. Let $\BBB \subseteq \CC[\xxx_N]_{\leq m}$ be a set of homogeneous polynomials which descends to a vector space basis for $(\CC[\xxx_N]/\gr \, I)_{\leq m}$. Then $\BBB$ descends to a vector space basis of $\CC[\xxx_N]_{\leq m}/(I \cap \CC[\xxx_N]_{\leq m})$.
\end{lemma}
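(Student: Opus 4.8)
The plan is to prove directly that the image of $\BBB$ in $\CC[\xxx_N]_{\le m}/(I \cap \CC[\xxx_N]_{\le m})$ is both linearly independent and spanning, arguing in each case by induction along the degree filtration $\CC[\xxx_N]_{\le 0} \subseteq \CC[\xxx_N]_{\le 1} \subseteq \cdots \subseteq \CC[\xxx_N]_{\le m}$. Throughout I would repeatedly use two facts: if $g \in I$ is nonzero then $\tau(g) \in \gr\, I$; and conversely $(\gr\, I)_d$ is spanned by elements of the form $p \cdot \tau(h) = \tau(ph)$ with $h \in I$, $p$ homogeneous, and $\deg(ph) = d$, where $ph \in I$ because $I$ is an ideal.

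For linear independence, I would suppose $g := \sum_{b \in \BBB} c_b b \in I \cap \CC[\xxx_N]_{\le m}$ with not all $c_b$ zero, and let $d$ be the largest degree occurring among the $b \in \BBB$ with $c_b \ne 0$. Since $\BBB$ descends to a basis of $(\CC[\xxx_N]/\gr\, I)_{\le m}$ its elements are in particular linearly independent polynomials, so the top-degree part $\tau(g) = \sum_{\deg b = d} c_b b$ is a nonzero homogeneous polynomial of degree $d$; but $g \in I$ forces $\tau(g) \in \gr\, I$, contradicting the linear independence of the degree-$d$ elements of $\BBB$ modulo $\gr\, I$. Hence the images of $\BBB$ in $\CC[\xxx_N]_{\le m}/(I \cap \CC[\xxx_N]_{\le m})$ are linearly independent.

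For spanning I would show by induction on $d$ that every $f \in \CC[\xxx_N]$ of degree $d \le m$ is congruent modulo $I \cap \CC[\xxx_N]_{\le m}$ to a linear combination of $\BBB$; the case $d = 0$ is immediate since a constant either lies in $I$ or is a scalar multiple of the degree-$0$ element of $\BBB$. Given $f$ of degree $d$ with top part $\tau(f)$, the hypothesis on $\BBB$ yields coefficients $c_b$ and an element $g_d \in (\gr\, I)_d$ with $\tau(f) = \sum_{\deg b = d} c_b b + g_d$. Expanding $g_d$ as a sum of terms $\tau(ph)$ with $\deg(ph) = d \le m$ as above, I obtain $h' \in I \cap \CC[\xxx_N]_{\le m}$ whose degree-$d$ homogeneous component equals $g_d$; then $f - \sum_{\deg b = d} c_b b - h'$ has degree strictly less than $d$, so the inductive hypothesis applies. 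The step I expect to be the only delicate point — and the main obstacle — is precisely the one flagged in Remark~\ref{rmk:gr-generation}: one cannot replace $(\gr\, I)_d$ by the span of top-degree parts of a convenient generating set of $I$, so one must invoke the full description of $(\gr\, I)_d$, and must check that the correcting element $h'$ does not escape the truncation $\CC[\xxx_N]_{\le m}$ — which is exactly why the products $\tau(ph)$ are taken with $\deg(ph)$ equal to $d$ rather than merely $\le m$. Combining the two inductions shows $\BBB$ descends to a basis, completing the argument.
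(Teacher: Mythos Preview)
The paper does not actually prove this lemma; it cites the result as standard with a reference to \cite[Lem.~3.15]{RhoadesViennot}. Your direct argument is correct and is the standard one.

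One small streamlining: in the spanning step you expand $g_d \in (\gr\, I)_d$ as a sum of terms $\tau(p h)$ and then add the $ph$'s to get $h'$. It is cleaner to note at the outset that $(\gr\, I)_d$ equals $\{\tau(f) : f \in I,\ \deg f = d\} \cup \{0\}$: given a sum $\sum_i p_i \tau(h_i)$ with each $p_i h_i \in I$ of degree $d$, the element $f = \sum_i p_i h_i \in I$ has degree $\le d$ and degree-$d$ part exactly $\sum_i p_i \tau(h_i)$, so if that sum is nonzero then $\tau(f)$ realizes it. This lets you take $h'$ in one stroke with $\tau(h') = g_d$ and $\deg h' = d \le m$, and removes any worry about the sum of $\tau$'s not being $\tau$ of the sum. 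Your base case $d=0$ is also slightly brisk (if $(\gr\, I)_0 = \CC$ then $I$ contains a nonzero constant, whence $I = \CC[\xxx_N]$, $\BBB = \varnothing$, and both quotients vanish), but this is harmless.
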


Now suppose $\Zpoints \subseteq \CC^N$ is a finite locus which is stable under the action of a finite matrix group $G \subseteq GL_N(\CC)$. For $m \geq 0$, the subspace $\CC[\xxx_N]_{\leq m} \subseteq \CC[\xxx_N]$ is $G$-stable, as is the ideal $\II(\Zpoints)$. The quotient $\CC[\xxx_N]_{\leq m}/(I \cap \CC[\xxx_N]_{\leq m})$ is therefore an ungraded $G$-module. The following result refines the isomorphism \eqref{eq:orbit-harmonics-isomorphisms}.

\begin{lemma}
    \label{lem:refined-orbit-harmonics}
    Suppose a finite locus $\Zpoints \subseteq \CC^N$ is stable under the action of a finite matrix group $G \subseteq GL_N(\CC)$. For $m \geq 0$, we have an isomorphism of ungraded $G$-modules
    $$R(\Zpoints)_{\leq m} = (\CC[\xxx_N]/\gr \, \II(\Zpoints))_{\leq m} \cong_G \CC[\xxx_N]_{\leq m}/(I \cap \CC[\xxx_N]_{\leq m}).$$
\end{lemma}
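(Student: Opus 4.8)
The statement is essentially a $G$-equivariant upgrade of Lemma~\ref{lem:basis-transfer}, applied with $I = \II(\Zpoints)$. So the plan is to reduce to that lemma. Since $G$ is a finite group and we are over $\CC$ (characteristic zero), every $\CC G$-module is semisimple, so two finite-dimensional $G$-modules are isomorphic if and only if they have the same character, equivalently the same composition multiplicities for each irreducible. Thus it suffices to produce, for each irreducible $\CC G$-module $S$, an equality of multiplicities
$$\langle \mathrm{ch}\, R(\Zpoints)_{\leq m}, \chi^S \rangle = \langle \mathrm{ch}\, \CC[\xxx_N]_{\leq m}/(\II(\Zpoints) \cap \CC[\xxx_N]_{\leq m}), \chi^S \rangle.$$

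First I would recall that the full isomorphism \eqref{eq:orbit-harmonics-isomorphisms}, $\CC[\xxx_N]/\II(\Zpoints) \cong_G \CC[\xxx_N]/\gr\,\II(\Zpoints) = R(\Zpoints)$, is an isomorphism of ungraded $G$-modules; this is stated in the excerpt and is the ungraded ($m = \infty$) version of what we want. The point of the refined statement is to control the filtration piece by piece. The key construction is the standard degree-filtration-to-grading comparison: filter $\CC[\xxx_N]/\II(\Zpoints)$ by the images of the subspaces $\CC[\xxx_N]_{\leq d}$, and observe that the associated graded of this filtration is exactly $\CC[\xxx_N]/\gr\,\II(\Zpoints) = R(\Zpoints)$ with its grading. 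Concretely, for each $d$ one has a $G$-equivariant isomorphism between the $d$-th graded piece $R(\Zpoints)_d$ and the quotient of $\CC[\xxx_N]_{\leq d}/(\II(\Zpoints)\cap \CC[\xxx_N]_{\leq d})$ by $\CC[\xxx_N]_{\leq d-1}/(\II(\Zpoints)\cap \CC[\xxx_N]_{\leq d-1})$, the map being "take top homogeneous part". The $G$-equivariance holds because $G$ acts by linear substitutions, hence preserves degree, hence commutes with $f \mapsto \tau(f)$ on each filtration layer. Summing these isomorphisms over $0 \leq d \leq m$ and using semisimplicity (so that a filtered module with semisimple associated graded is isomorphic to that associated graded) gives $R(\Zpoints)_{\leq m} \cong_G \CC[\xxx_N]_{\leq m}/(\II(\Zpoints)\cap \CC[\xxx_N]_{\leq m})$.

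Alternatively, and perhaps more cleanly, I would invoke Lemma~\ref{lem:basis-transfer} directly with $I = \II(\Zpoints)$: if $\BBB_{\leq m}$ is a homogeneous basis of $R(\Zpoints)_{\leq m}$, then it descends to a basis of $\CC[\xxx_N]_{\leq m}/(\II(\Zpoints)\cap\CC[\xxx_N]_{\leq m})$. To make this $G$-equivariant, choose $\BBB_{\leq m}$ compatibly with the $G$-action: for each $d \leq m$ decompose $R(\Zpoints)_d$ into $G$-irreducibles and pick a basis adapted to this decomposition, so that $\mathrm{span}_\CC(\BBB_d) \cong_G R(\Zpoints)_d$ as the span of these chosen vectors. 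Then the dimension count in Lemma~\ref{lem:basis-transfer} (dimension of the degree-$\leq m$ quotient equals $|\BBB_{\leq m}|$) refines: the image of $\mathrm{span}_\CC(\BBB_{\leq d})$ in $\CC[\xxx_N]_{\leq m}/(\II(\Zpoints)\cap\CC[\xxx_N]_{\leq m})$ gives a $G$-stable filtration whose layers are $G$-isomorphic to the $R(\Zpoints)_d$, and semisimplicity collapses the filtration to a direct sum.

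The only mild subtlety — and the step I would write most carefully — is checking that "take the top homogeneous part" is well-defined and $G$-equivariant as a map on the relevant subquotients, i.e. that it does not depend on the choice of coset representative modulo $\II(\Zpoints)\cap\CC[\xxx_N]_{\leq d-1}$ and that it carries that subquotient isomorphically onto $R(\Zpoints)_d$. This is exactly the content that makes $\gr$ of the degree filtration on $\CC[\xxx_N]/\II(\Zpoints)$ equal to $\CC[\xxx_N]/\gr\,\II(\Zpoints)$, and it is routine; the $G$-equivariance is immediate from $G \subseteq GL_N(\CC)$ acting by degree-preserving linear substitutions. Everything else is formal semisimple-module bookkeeping. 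I do not anticipate a genuine obstacle here; the lemma is a packaging statement, and the work is in stating the filtration correctly rather than in overcoming any real difficulty.
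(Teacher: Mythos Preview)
Your proposal is correct and follows essentially the same approach as the paper. The paper chooses an arbitrary homogeneous basis $\BBB = \bigsqcup_d \BBB_d$ of $R(\Zpoints)_{\leq m}$, transfers it via Lemma~\ref{lem:basis-transfer}, and then observes that for each $g \in G$ the representing matrix on $R(\Zpoints)_{\leq m}$ is block diagonal while that on $\CC[\xxx_N]_{\leq m}/(\II(\Zpoints)\cap\CC[\xxx_N]_{\leq m})$ is block triangular with the same diagonal blocks; semisimplicity then finishes. This is exactly your ``associated graded of the degree filtration equals $R(\Zpoints)$'' argument in matrix form, so your first approach matches the paper, and your second approach's extra step of choosing a $G$-adapted basis is harmless but unnecessary.
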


Lemma~\ref{lem:refined-orbit-harmonics} is well-known in orbit harmonics theory, but we include a proof for the benefit of the reader.

\begin{proof}
    Let $\BBB \subseteq \CC[\xxx_N]_{\leq m}$ be a set of homogeneous polynomials which descends to a vector space basis of $R(\Zpoints)_{\leq m}$. Lemma~\ref{lem:basis-transfer} says that $\BBB$ also descends to a vector space basis of the quotient $\CC[\xxx_N]_{\leq m}/(\II(\Zpoints) \cap \CC[\xxx_N]_{\leq m}).$ 
    
    Fix a matrix $g \in G$.  Write $\BBB = \BBB_0 \sqcup \BBB_1 \sqcup \cdots \sqcup \BBB_m$ where $\BBB_d$ denotes the elements in $\BBB$ of homogeneous degree $d$. One checks that 
    \begin{itemize}
        \item the representing matrix for $g$ on $R(\Zpoints)_{\leq m}$ is block diagonal with respect to the stratification $\BBB = \bigsqcup_{d = 0}^m \BBB_d$, 
        \item the representing matrix for $g$ on $\CC[\xxx_N]_{\leq m}/(\II(\Zpoints) \cap \CC[\xxx_N]_{\leq m})$ is block triangular with respect to the stratification $\BBB = \bigsqcup_{d = 0}^m \BBB_d$, and
        \item the diagonal blocks of the above two matrices coincide.
    \end{itemize}
    By the above three bullet points, the $\CC[G]$-modules $R(\Zpoints)_{\leq m}$ and $\CC[\xxx_N]_{\leq m}/(\II(\Zpoints) \cap \CC[\xxx_N]_{\leq m})$ have the same composition series. Since $G$ is a finite group, the group algebra $\CC[G]$ is semisimple and we have $R(\Zpoints)_{\leq m} \cong_G \CC[\xxx_N]_{\leq m}/(\II(\Zpoints) \cap \CC[\xxx_N]_{\leq m})$, as desired.
\end{proof}

The above proof works over any field $\mathbb{F}$ in which the order of $G$ is nonzero. For arbitrary fields $\mathbb{F}$, the $\mathbb{F}[G]$-modules $(\mathbb{F}[\xxx_N]/\gr \, \II(\Zpoints))_{\leq m}$ and $\mathbb{F}[\xxx_N]_{\leq m}/(\II(\Zpoints) \cap \mathbb{F}[\xxx_N]_{\leq m})$ have the same composition series (i.e. these modules are {\em Brauer-isomorphic}).

\subsection{The Schensted Correspondence}
Let $n$ be a positive integer. A {\em partition} of $n$ is a sequence of non-decreasing positive integers $\lambda = (\lambda_1, \lambda_2, \dots , \lambda_k)$ such that $\sum_{i=1}^k \lambda_i = n$. We write $\lambda \vdash n$ to say that $\lambda$ is a partition of $n$.

Let $\lambda = (\lambda_1, \lambda_2, \dots , \lambda_k) \vdash n$. A {\em Young diagram} of shape $\lambda$ is the diagram obtained by placing $\lambda_i$ boxes in the $i$-th row. Below on the left is a Young diagram of shape $(4,2,1) \vdash 7$.
\begin{center}
    \begin{ytableau}
    \: & \: & \: & \: \cr
       & \cr
    \:
\end{ytableau} \quad \quad
\begin{ytableau}
    1 & 3 & 6 & 7 \cr
    2 & 5 \cr
    4
\end{ytableau}
\end{center}
A {\em standard Young tableaux} of shape $\lambda$ is a bijective filling of $[n]$ into the boxes of the Young diagram of shape $\lambda$, such that the entries are increasing across rows and down columns. Above on the right is a standard Young tableaux of shape $(4,2,1)$. We write $\SYT(\lambda)$ to denote the collection of standard Young tableau of shape $\lambda$.

The {\em Schensted correspondence}~\cite{Schensted} is a bijection
\begin{equation}
    \symm_n \xrightarrow{\quad \sim \quad} \bigsqcup_{\lambda \vdash n} \{ (P,Q): P,Q \in \SYT (\lambda)\},
\end{equation}
which associates each permutation $w \in \symm_n$ with a pair of standard Young tableau $(P(w),Q(w))$ of the same shape. The Schensted correspondence is usually described by an insertion algorithm, which can be found in \cite{Sagan}. 

\begin{remark}
    One important fact about the Schensted correspondence is that if $w \in \symm_n$ corresponds to the pair of tableau $(P,Q)$, then $w^{-1}$ corresponds to the pair of tableau $(Q,P)$. In particular, if $w \in \symm_n$ is an involution, the pair of tableau that corrsponds to $w$ will have the form $(P,P)$.
\end{remark}

\subsection{Representation Theory of $\symm_n$} We write $\Lambda = \bigoplus_{n \, \geq \, 0} \Lambda_n$ for the graded algebra of symmetric functions in an infinite variable set $\xxx = (x_1, x_2, \dots )$ over the ground field $\CC(q)$. For example, the {\em power sum symmetric function} of degree $n$ is given by $p_n := \sum_{i \geq 1} x_i^n \in \Lambda_n$. It is well known that $\{p_1, p_2, \dots \}$ is an algebraically independent generating set $\Lambda$.

Bases of the $n^{th}$ graded piece $\Lambda_n$ of $\Lambda$ are indexed by partitions $\lambda \vdash n$. For example, the {\em power sum basis} $\{ p_\lambda \,:\, \lambda \vdash n \}$ is defined by $p_\lambda := p_{\lambda_1} p_{\lambda_2} \cdots$ for $\lambda = (\lambda_1 \geq \lambda_2 \geq \cdots )$. We will also need the {\em Schur basis} $\{ s_\lambda \,:\, \lambda \vdash n\}$.

A symmetric function $F \in \Lambda$ is {\em Schur-positive} if the coefficients $c_\lambda(q)$ in its $s$-expansion $F = \sum_\lambda c_\lambda(q) \cdot s_\lambda$ satisfy $c_\lambda(q) \in \mathbb{R}_{\geq 0}[q]$. If $F, G \in \Lambda$ we define $F \leq G$ by
\begin{center}
    $F \leq G$ if and only if $G-F$ is Schur-positive.
\end{center}
Furthermore, if $P$ is some property that partitions may or may not have and 
$F = \sum_\lambda c_\lambda(q) \cdot s_\lambda$, we define the truncation $\{F\}_P$ by
\begin{equation}
    \{F\}_P := \sum_{\lambda} \chi[\text{$\lambda$ satisfies $P$}] \cdot c_\lambda(q) \cdot s_\lambda.
\end{equation}
Here $\chi[S]$ is 1 if a statement $S$ is true and 0 otherwise. The truncation operator $\{-\}_P$ will typically be used to restrict the first row lengths of a partition, e.g. $\{ - \}_{\lambda_1 = a}$ or $\{ - \}_{\lambda_1 \leq a}$. 

In addition to its usual multiplication, the graded ring $\Lambda$ carries an additional binary operation called {\em plethysm}. If $F = F(x_1, x_2, \dots ) \in \Lambda$ and $n > 0$, we defined $p_n[F] \in \Lambda$ by 
\begin{equation}
    p_n[F] := F(x_1^n, x_2^n, \dots ).
\end{equation}
Since $\{p_1, p_2, \dots \}$ is an algebraically independent generating set of $\Lambda$, the rules
\begin{equation}
    \begin{cases}
        (G_1 + G_2)[F] = G_1[F] + G_2[F] \\
        (G_1 \cdot G_2)[F] = G_1[F] \cdot G_2[F] \\
        c[F] = c
    \end{cases}
\end{equation}
for all $G_1, G_2, F \in \Lambda$ and all scalars $c \in \CC(q)$ uniquely define a symmetric function $G[F] \in \Lambda$ for all $G, F \in \Lambda$. For more details on plethysm, see Macdonald's book \cite{Macdonald}. 

Irreducible representations of the symmetric group $\symm_n$ over $\CC$ are in one-to-one correspondence with partitions $\lambda \vdash n$. If $\lambda \vdash n$ is a partition, write $V^\lambda$ for the corresponding irreducible $\symm_n$-module. If $V$ is any finite-dimensional $\symm_n$-module, there are unique multiplicities $c_\lambda \geq 0$ so that $V \cong \bigoplus_{\lambda \vdash n} c_\lambda V^\lambda$. The {\em Frobenius image} of $V$ is the symmetric function
\begin{equation}
    \Frob(V) := \sum_{\lambda \, \vdash \, n} c_\lambda \cdot s_\lambda
\end{equation}
obtained by replacing each irreducible $V^\lambda$ with the corresponding Schur function $s_\lambda$. For example, the trivial representation $\one_{\symm_n}$ corresponds to the Schur function $s_{n}$ indexed by the one-part partition $(n)$. If $V = \bigoplus_{d \geq 0} V_d$ is a graded $\symm_n$-module, we define the {\em graded Frobenius image} by 
\begin{equation}
    \grFrob(V;q) := \sum_{d \, \geq \, 0} \Frob(V_d) \cdot q^d.
\end{equation}
More generally, if $V = \bigoplus_{d \geq 0} V_d$ is a graded vector space, the {\em Hilbert series} of $V$ is
\begin{equation}
    \Hilb(V;q) := \sum_{d \, \geq \, 0} \dim(V_d) \cdot q^d.
\end{equation}

Given $n, m \geq 0$, we have an embedding $\symm_n \times \symm_m \subseteq \symm_{n+m}$ by letting $\symm_n$ permute the first $n$ letters of $[n+m]$ and letting $\symm_m$ permute the last $m$ letters. If $V$ is an $\symm_n$-module an $W$ is an $\symm_m$-module, the tensor product $V \otimes W$ is naturally an $\symm_n \times \symm_m$-module. The {\em induction product} of $V$ and $W$ is 
\begin{equation}
    V \circ W := \Ind_{\symm_n \times \symm_m}^{\symm_{n+m}} (V \otimes W).
\end{equation}
Induction product of representations corresponds to multiplication of symmetric functions; we have (see e.g. \cite{Macdonald, Sagan})
\begin{equation}
    \Frob(V \circ W) = \Frob(V) \cdot \Frob(W).
\end{equation}

For $n, m \geq 0$, the {\em wreath product} $\symm_n \wr \symm_m$ is the subgroup of $\symm_{nm}$ generated by $\dots$
\begin{itemize}
    \item the $n$-fold direct product $\symm_m \times \symm_m \times \cdots \times \symm_m$ permuting elements of the sets $$\{1,2,\dots,m\},\{m+1, m + 2, \dots,2m\}, \dots, \{n m - m +1, nm - m + 2, \dots, n m\}$$
    independently, as well as
    \item products of transpositions of the form $$(im-m+1,jm-m+1)(im-m+2,jm-m+2) \cdots (im,jm)$$
    for $1 \leq i,j \leq n$ which interchange two of the above $n$ sets `wholesale'.
\end{itemize}
One way to visualize the group $\symm_n \wr \symm_m$ is as follows. Fill the entries of an $n \times m$ grid with the numbers $1, 2, \dots, nm$ in English reading order. For example, if $n = 3$ and $m = 4$ we have the following figure.
$$\begin{ytableau}
    1 & 2 & 3 & 4 \\
    5 & 6 & 7 & 8 \\
    9 & 10 & 11 & 12
\end{ytableau}$$
The group $\symm_n \wr \symm_m$ is the subgroup of $\symm_{nm}$ generated by permutations which either permute entries within rows or interchange rows wholesale.
We have embeddings $\symm_m^n \subseteq \symm_n \wr \symm_m$ and $\symm_n \subseteq \symm_n \wr \symm_m$; the images of these embeddings generate $\symm_n \wr \symm_m$.

If $V$ is an $\symm_n$-module and $W$ is an $\symm_m$-module, we have a $(\symm_n \wr \symm_m)$-module $V \wr W$ with underlying vector space 
\begin{equation}
    V \wr W := V \otimes \overbrace{W \otimes \cdots \otimes W}^n
\end{equation}
and group action determined by
\begin{equation}    
    \begin{cases}
        (u_1, \dots, u_n) \cdot (v \otimes w_1 \otimes \cdots \otimes w_n) := v \otimes (u_1 \cdot w_1) \otimes \cdots \otimes (u_n \cdot w_n) & (u_1, \dots, u_n) \in \symm_m^n, \\
        t \cdot (v \otimes w_1 \otimes \cdots \otimes w_n) := t \cdot v \otimes w_{t^{-1}(1)} \otimes \cdots \otimes w_{t^{-1}(n)} & t \in \symm_n.
    \end{cases}
\end{equation}
Plethysm of symmetric functions relates to this construction via
\begin{equation}
    \Frob(V)[\Frob(W)] = \Ind_{\symm_n \wr \symm_m}^{\symm_{nm}} (V \wr W).
\end{equation}
See \cite{Macdonald} for a textbook treatment of this fact.
We will only use this construction when both $V$ and $W$ are trivial modules; we note the combinatorial interpretation of the relevant symmetric function $s_n[s_m]$.

\begin{lemma}
    \label{lem:plethysm-interpretation}
    For $n, m \geq 0$, let $\Pi_{n,m}$ be the family of set partitions of $[nm]$ consisting of $n$ blocks, each of size $m$. The symmetric group $\symm_{nm}$ acts on $\Pi_{n,m}$; let $\CC[\Pi_{n,m}]$ be the corresponding permutation module. The $\symm_{nm}$-module $\CC[\Pi_{n,m}]$ has Frobenius image
    $$\Frob(\Pi_{n,m}) = s_n[s_m].$$
\end{lemma}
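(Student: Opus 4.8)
The plan is to identify $\CC[\Pi_{n,m}]$ with the induction product $\Ind_{\symm_n \wr \symm_m}^{\symm_{nm}}(\one \wr \one)$ and then invoke the plethysm formula $\Frob(V)[\Frob(W)] = \Ind_{\symm_n \wr \symm_m}^{\symm_{nm}}(V \wr W)$ with $V = \one_{\symm_n}$ and $W = \one_{\symm_m}$, using $\Frob(\one_{\symm_n}) = s_n$ and $\Frob(\one_{\symm_m}) = s_m$ to conclude $\Frob(\Pi_{n,m}) = s_n[s_m]$. So the content of the proof is really the combinatorial identification of the permutation module on $\Pi_{n,m}$ with the induced module.

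First I would describe the standard set partition $\pi_0 \in \Pi_{n,m}$ whose blocks are the rows of the $n \times m$ grid filled in reading order, i.e. $\{1,\dots,m\}, \{m+1,\dots,2m\}, \dots, \{nm-m+1,\dots,nm\}$. The key observation is that $\symm_{nm}$ acts transitively on $\Pi_{n,m}$ — any set partition into $n$ blocks of size $m$ can be carried to any other by a permutation — so $\CC[\Pi_{n,m}] \cong \CC[\symm_{nm}/H]$ where $H = \mathrm{Stab}_{\symm_{nm}}(\pi_0)$ is the stabilizer of $\pi_0$. Next I would check that this stabilizer is exactly the wreath product: a permutation fixes the partition $\pi_0$ (setwise on the set of blocks) if and only if it permutes the elements within each block and permutes the blocks among themselves, which is precisely the generating description of $\symm_n \wr \symm_m \subseteq \symm_{nm}$ given in the text. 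Hence $\CC[\Pi_{n,m}] \cong \Ind_{\symm_n \wr \symm_m}^{\symm_{nm}} \one = \Ind_{\symm_n \wr \symm_m}^{\symm_{nm}}(\one_{\symm_n} \wr \one_{\symm_m})$, where the last equality holds because the module $\one_{\symm_n} \wr \one_{\symm_m}$ has underlying space $\CC \otimes \CC^{\otimes n} = \CC$ with trivial action of both $\symm_m^n$ and $\symm_n$, hence is the trivial $(\symm_n \wr \symm_m)$-module.

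Finally, applying $\Frob(V)[\Frob(W)] = \Ind_{\symm_n \wr \symm_m}^{\symm_{nm}}(V \wr W)$ to $V = \one_{\symm_n}$, $W = \one_{\symm_m}$ gives $\Frob(\CC[\Pi_{n,m}]) = \Frob(\one_{\symm_n})[\Frob(\one_{\symm_m})] = s_n[s_m]$, as claimed. I expect the only mild subtlety — the "hard part," though it is routine — to be the verification that the stabilizer of $\pi_0$ coincides with the wreath product on the nose rather than merely containing it: one argues that a permutation $g$ fixing $\pi_0$ must send each block to a block (inducing a permutation in $\symm_n$ of the $n$ blocks), and after composing with an element of the "block-interchange" subgroup $\symm_n$ we may assume $g$ fixes every block setwise, whence $g \in \symm_m^n$; this shows $H \subseteq \symm_m^n \rtimes \symm_n = \symm_n \wr \symm_m$, and the reverse containment is immediate from the generating set. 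A boundary remark: the cases $n = 0$ or $m = 0$ are degenerate ($\Pi_{n,m}$ is a single empty/trivial partition and both sides equal $s_0 = 1$), so they hold trivially.
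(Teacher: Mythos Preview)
Your proof is correct and follows essentially the same approach as the paper: identify $\one_{\symm_n} \wr \one_{\symm_m}$ with the trivial $(\symm_n \wr \symm_m)$-module, recognize $\CC[\Pi_{n,m}]$ as the coset module $\CC[\symm_{nm}/(\symm_n \wr \symm_m)]$ via the stabilizer computation, and apply the plethysm formula. Your write-up is in fact more detailed than the paper's, which simply asserts the equivariant bijection between $\Pi_{n,m}$ and $\symm_{nm}/(\symm_n \wr \symm_m)$ without spelling out the stabilizer argument.
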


\begin{proof}
    It is not hard to see that $\one_{\symm_n} \wr \one_{\symm_m} = \one_{\symm_n \wr \symm_m}$ so that $s_n[s_m]$ is the Frobenius image of the coset representation
    \begin{equation}
        \Ind_{\symm_n \wr \symm_m}^{\symm_{nm}} \one_{\symm_n \wr \symm_m} = \CC[\symm_{nm}/(\symm_n \wr \symm_m)].
    \end{equation}
    On the other hand, there is a natural correspondence between set partitions in $\Pi_{n,m}$ and cosets in $\symm_{nm}/(\symm_n \wr \symm_m)$ which is equivariant with respect to $\symm_{nm}$.
\end{proof}

In particular, if $n$ is even, Lemma~\ref{lem:plethysm-interpretation} implies that the Frobenius image of the action of $\symm_n$ on the set $\PM_n$ of perfect matchings on $[n]$ is $s_{n/2}[s_2]$. It is an extremely difficult open problem to determine the $s$-expansion $s_a[s_b]$ for arbitrary $a,b$. However, if $n$ is even it is known that
\begin{equation}\label{eq:ple-exp}
    s_{n/2}[s_2] = \sum_{\substack{\lambda \, \vdash \, n \\ \text{$\lambda$ even}}} s_\lambda
\end{equation}
is the multiplicity-free sum over all $s_\lambda$ where $\lambda \vdash n$ has all even parts.

We will need a standard result on symmetrizers acting on $\symm_n$-irreducibles. Let $j \leq n$ and consider the embedding $\symm_j \subseteq \symm_n$ where $\symm_j$ acts on the first $j$ letters. This induces an embedding of group algebras $\CC[\symm_j] \subseteq \CC[\symm_n]$. We let $\eta_j \in \CC[\symm_j] \subseteq \CC[\symm_n]$ be the element which symmetrizes with respect to $\symm_j$, i.e.
\begin{equation}
   \eta_j := \sum_{w \, \in \, \symm_j} w.
\end{equation}
If $V$ is an $\symm_n$-module, the element $\eta_j$ acts as an operator on $V$. The following result characterizes when $\eta_j$ annihilates irreducible $\symm_n$-modules.

\begin{lemma}
    \label{lem:eta-annihilation}
    Let $j \leq n$. For a partition $\lambda \vdash n$, we have $\eta_j \cdot V^\lambda \neq 0$ if and only if $\lambda_1 \geq j$.
\end{lemma}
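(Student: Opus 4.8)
The plan is to reduce the claim to a multiplicity computation for the restriction $\Res^{\symm_n}_{\symm_j} V^\lambda$, and then to read that multiplicity off from the branching rule (or, equivalently, from Pieri's rule).

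First I would record the elementary identity $\eta_j^2 = j! \cdot \eta_j$: expanding $(\sum_{u \in \symm_j} u)(\sum_{v \in \symm_j} v)$, each $w \in \symm_j$ arises as a product $uv$ with $u, v \in \symm_j$ in exactly $|\symm_j| = j!$ ways. Hence $\tfrac{1}{j!}\eta_j \in \CC[\symm_j] \subseteq \CC[\symm_n]$ is idempotent. For any $\symm_n$-module $V$, the relation $u \cdot \eta_j = \eta_j$ for $u \in \symm_j$ shows that $\eta_j \cdot V$ is contained in the subspace $V^{\symm_j}$ of $\symm_j$-fixed vectors, while $\eta_j$ acts as the scalar $j!$ on $V^{\symm_j}$; therefore $\eta_j \cdot V = V^{\symm_j}$. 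Applying this with $V = V^\lambda$, we conclude that $\eta_j \cdot V^\lambda \neq 0$ if and only if $(V^\lambda)^{\symm_j} \neq 0$, i.e. if and only if the trivial module $\one_{\symm_j}$ occurs in $\Res^{\symm_n}_{\symm_j} V^\lambda$ with positive multiplicity.

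Next I would compute the multiplicity $m := \dim (V^\lambda)^{\symm_j} = \langle \Res^{\symm_n}_{\symm_j} V^\lambda, \one_{\symm_j} \rangle_{\symm_j}$. Iterating the branching rule along $\symm_n \downarrow \symm_{n-1} \downarrow \cdots \downarrow \symm_j$ (see e.g. \cite{Sagan}) gives $\Res^{\symm_n}_{\symm_j} V^\lambda \cong \bigoplus_{\mu \vdash j} f^{\lambda/\mu} \, V^\mu$, where $f^{\lambda/\mu}$ is the number of standard Young tableaux of skew shape $\lambda/\mu$ (so $f^{\lambda/\mu} = 0$ unless $\mu \subseteq \lambda$). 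Taking $\mu = (j)$, the one-row partition of $j$, yields $m = f^{\lambda/(j)}$. (Alternatively, Frobenius reciprocity reduces the count to the multiplicity of $V^\lambda$ in $\Ind^{\symm_n}_{\symm_j \times \symm_{n-j}}(\one_{\symm_j} \otimes \CC[\symm_{n-j}])$, which by Pieri's rule equals $\sum_\mu \dim V^\mu$ over partitions $\mu \vdash n-j$ with $\lambda/\mu$ a horizontal strip of size $j$.)

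Finally I would observe that $f^{\lambda/(j)} > 0$ exactly when $(j) \subseteq \lambda$ as Young diagrams — every skew shape admits at least one standard filling — and that $(j) \subseteq \lambda$ holds if and only if $\lambda_1 \geq j$. (In the Pieri version: a horizontal strip of size $j$ can be removed from $\lambda$ precisely when $j \leq \lambda_1$, and then $\dim V^\mu \geq 1$ forces $m \geq 1$.) This proves the lemma. I do not expect a genuine obstacle here: the substance is the standard branching/Pieri computation, and the only point requiring (minor) care is to phrase the reduction $\eta_j \cdot V = V^{\symm_j}$ for an arbitrary module so that it applies verbatim to the irreducible $V^\lambda$.
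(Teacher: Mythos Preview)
Your proof is correct and follows essentially the same approach as the paper: both reduce the question to whether the trivial $\symm_j$-module appears in $\Res^{\symm_n}_{\symm_j} V^\lambda$ via the identity $\eta_j \cdot V = V^{\symm_j}$, and then settle that multiplicity by iterating the branching rule. Your version is slightly more explicit (the idempotent computation and the identification $m = f^{\lambda/(j)}$), but the substance is identical.
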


Lemma~\ref{lem:eta-annihilation} is standard, but we include a proof for the convenience of the reader.

\begin{proof}
   For any $\symm_n$-module $V$, it is not hard to check that
    \begin{equation}
        \eta_j \cdot V = V^{\symm_j} := \{ v \in V \,:\, w \cdot v = v \text{ for all $w \in \symm_j$} \}.
   \end{equation}
    Therefore, we have 
    \begin{equation}
        \label{eq:eta-equivalence}
        \eta_j \cdot V^{\lambda} \neq 0 \quad \Leftrightarrow \quad \left(\Res^{\symm_n}_{\symm_j}(V^{\lambda})\right)^{\symm_j} \neq 0 
    \end{equation}
    which is true if and only if the trivial representation $\one_{\symm_j}$ occurs with positive multiplicity in the restriction $\Res^{\symm_n}_{\symm_j}(V^{\lambda})$. The Branching Rule for $\symm_n$-modules (see e.g. \cite{Macdonald, Sagan}) states that $\Res^{\symm_n}_{\symm_{n-1}} V^\lambda = \bigoplus_\mu V^\mu$ where the direct sum is over partitions $\mu \vdash n-1$ obtained by removing an outer corner of $\lambda$. Iterating, we see that \eqref{eq:eta-equivalence} holds if and only if $\lambda_1 \geq j$.
\end{proof}

\section{The Matching Locus}
\label{sec:Matching}

As in the introduction, we consider the locus $\MMM_n = \{  w \in \symm_n \,:\, w^2 = 1 \} \subseteq \Mat_{n \times n}(\CC)$ of $n \times n$ permutation matrices corresponding to involutions. The symmetric group $\symm_n$ acts on $\MMM_n$ by conjugation: 
\begin{equation}
    v \cdot w := v w v^{-1} \quad \quad v \in \symm_n, \, w \in \MMM_n.
\end{equation}
The corresponding action of $\symm_n$ on $\CC[\xxx_{n \times n}]$ is 
\begin{equation}
    v \cdot x_{i,j} := x_{v(i),v(j)} \quad \quad v \in \symm_n, \, 1 \leq i,j \leq n.
\end{equation}
The ideal $\gr \, \II(\MMM_n) \subseteq \CC[\xxx_{n \times n}]$ is homogeneous and $\symm_n$-stable.

\subsection{Matching monomial basis}
In order to study the ring $R(\MMM_n)$, it will be useful to have an explicit generating set of its defining ideal $\gr \, \II(\MMM_n)$. It will turn out that such a generating set is as follows.

\begin{defn}
    \label{def:matching-ideal}
    Let $I_n^\MMM \subseteq \CC[\xxx_{n \times n}]$ be the ideal generated by 
    \begin{itemize}
        \item all sums $x_{i,1} + \cdots + x_{i,n}$ of variables in a single row,
        \item all sums $x_{1,j} + \cdots + x_{n,j}$ of variables in a single column,
        \item all products $x_{i,j} \cdot x_{i,j'}$ of variables in a single row,
        \item all products $x_{i,j} \cdot x_{i',j}$ of variables in a single column, and
        \item all diagonally symmetric differences $x_{i,j} - x_{j,i}$ of variables. 
    \end{itemize}
    Here $1 \leq i, i', j, j' \leq n$.
\end{defn}

We aim to show that $\gr \, \II(\MMM_n) = I^\MMM_n$ as ideals in $\CC[\xxx_{n \times n}]$. One of these containments is straightforward.

\begin{lemma}
    \label{lem:matching-ideal-containment}
    We have $I^\MMM_n \subseteq \gr \, \II(\MMM_n)$.
\end{lemma}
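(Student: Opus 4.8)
The plan is to check that every generator from Definition~\ref{def:matching-ideal} already lies in $\gr \, \II(\MMM_n)$; since $\gr \, \II(\MMM_n)$ is an ideal, this immediately gives the containment $I^\MMM_n \subseteq \gr \, \II(\MMM_n)$. Recall that by definition $\gr \, \II(\MMM_n)$ contains $\tau(f)$ for every nonzero $f \in \II(\MMM_n)$, and in particular it contains every homogeneous polynomial lying in $\II(\MMM_n)$. The two elementary facts that drive everything are: each $w \in \MMM_n$, regarded as a $0/1$-matrix, is a permutation matrix, so it has exactly one $1$ in each row and each column and all other entries $0$; and this matrix is symmetric, since $w = w^{-1}$ forces the entry in position $(i,j)$ to equal the entry in position $(j,i)$. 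So for each of the five families I would exhibit an explicit witness polynomial in $\II(\MMM_n)$ whose top-degree part is the given generator.

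Concretely: for a fixed row index $i$, the fact that row $i$ of $w$ has a single $1$ means $x_{i,1} + \cdots + x_{i,n} - 1$ vanishes on all of $\MMM_n$, hence lies in $\II(\MMM_n)$, and its top-degree component is exactly the linear form $x_{i,1} + \cdots + x_{i,n}$; the column-sum generators are handled identically using columns. For the diagonally symmetric differences, symmetry of the matrix of $w$ gives $x_{i,j} - x_{j,i} \in \II(\MMM_n)$ outright, and since this polynomial is homogeneous of degree $1$ it is its own top-degree part, so it lies in $\gr \, \II(\MMM_n)$. For the quadratic generators: if $j \neq j'$, then row $i$ of $w$ has at most one nonzero entry, so the monomial $x_{i,j}\, x_{i,j'}$ vanishes identically on $\MMM_n$, and being homogeneous it lies in $\gr \, \II(\MMM_n)$ directly; the column products $x_{i,j}\, x_{i',j}$ with $i \neq i'$ are the same argument with rows replaced by columns. (Should one wish to include a repeated index, note that $x_{i,j} \in \{0,1\}$ on $\MMM_n$ gives $x_{i,j}^2 - x_{i,j} \in \II(\MMM_n)$, with top-degree part $x_{i,j}^2$.) This exhausts the generating set of $I^\MMM_n$, completing the proof.

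It is worth flagging where the involution hypothesis is actually used: only the symmetric-difference family $x_{i,j} - x_{j,i}$ invokes $w^2 = 1$; the other four families of generators already vanish (after subtracting a constant, in the linear cases) on the full permutation matrix locus $\symm_n$, so those parts are insensitive to restricting to $\MMM_n$.

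As for the main obstacle: honestly there is none of substance here. This lemma is the ``easy half'' of the ideal equality $\gr \, \II(\MMM_n) = I^\MMM_n$ — the real work is the reverse inclusion (the subject of Proposition~\ref{prop:matching-ideal-equality}), which typically requires a dimension/Hilbert-series count against the monomial basis. The only thing to be careful about in the present argument is the bookkeeping distinction between generators that are themselves homogeneous and hence lie in $\II(\MMM_n)$ (the quadratic generators and the symmetric differences) versus those that are merely the top-degree part of an inhomogeneous element of $\II(\MMM_n)$ (the row and column sums); both cases land in $\gr \, \II(\MMM_n)$, but for slightly different reasons.
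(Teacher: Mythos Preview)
Your proof is correct and follows essentially the same approach as the paper: exhibit, for each generator of $I^\MMM_n$, an explicit element of $\II(\MMM_n)$ whose top-degree part is that generator. Your additional remarks on which generators are already homogeneous and where the involution hypothesis enters are helpful exposition but do not alter the argument.
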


\begin{proof}
    As $\MMM_n=\{P(w) : w \in \symm_n, w^2=1\}$, it consists of all $n \times n$ symmetric $0$ and $1$ matrices with exactly one $1$ in each row and column. For $1 \leq i,i',j,j' \leq n$, the following polynomials are contained in (and in fact generate) the ideal $\II(\MMM_n)$:
    \begin{itemize}
        \item $x_{i,j}(x_{i,j}-1)$
        \item $x_{i,1} + \cdots + x_{i,n}-1$,
        \item $x_{1,j} + \cdots + x_{1,j}-1$,
        \item $x_{i,j} \cdot x_{i,j'}$,
        \item $x_{i,j} \cdot x_{i',j}$,
        \item $x_{i,j} - x_{j,i}$.
    \end{itemize}
    Note that the top degree homogeneous part of these polynomials are exactly the generators of $I^\MMM_n$, so the proof is complete.
\end{proof}

As explained in Remark~\ref{rmk:gr-generation}, the top degree components of a given generating set of $\II(\Zpoints)$ do not in general suffice to generate $\gr \, \II(\Zpoints)$. In order to show that we have generating in the context of Lemma~\ref{lem:matching-ideal-containment}, we show that the quotient $\CC[\xxx_{n \times n}]/I^\MMM_n$ has vector space dimension at most $|\MMM_n|$. Recall from the introduction that if $w \in \MMM_n$ is a matching, the matching monomial $\mmm(w) \in \CC[\xxx_{n \times n}]$ is the product over all variables $x_{i,j}$ for which $i < w(i) = j$.

\begin{theorem}
    \label{thm:matching-monomial-basis}
    The set $\{ \mmm(w) \,:\, w \in \MMM_n \}$ of matching monomials descends to a vector space basis of $R(\MMM_n)$.
\end{theorem}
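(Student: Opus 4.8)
The plan is to combine Lemma 2.11 (the containment $I_n^{\MMM} \subseteq \gr\,\II(\MMM_n)$) with a dimension count, via the standard orbit harmonics squeeze. Since orbit harmonics gives $\dim_{\CC} R(\MMM_n) = |\MMM_n|$, and since the quotient map $\CC[\xxx_{n\times n}]/I_n^{\MMM} \twoheadrightarrow \CC[\xxx_{n\times n}]/\gr\,\II(\MMM_n) = R(\MMM_n)$ is surjective, it suffices to prove two things: first, that the images of the matching monomials $\{\mmm(w) : w \in \MMM_n\}$ \emph{span} $\CC[\xxx_{n\times n}]/I_n^{\MMM}$, and second, that these images are \emph{linearly independent} in $R(\MMM_n)$. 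The first gives $\dim \CC[\xxx_{n\times n}]/I_n^{\MMM} \leq |\MMM_n|$, which forces $I_n^{\MMM} = \gr\,\II(\MMM_n)$ and $\dim R(\MMM_n) = |\MMM_n|$; the second then upgrades "spanning set of size $|\MMM_n|$" to "basis."

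First I would prove the spanning statement working modulo $I_n^{\MMM}$. The key reductions come directly from the generators of $I_n^{\MMM}$: the relations $x_{i,j} - x_{j,i}$ let me assume every variable in a monomial is upper-triangular ($i \leq j$); the relations $x_{i,j}x_{i,j'}$ and $x_{i,j}x_{i',j}$ are monomial relations saying no two variables in a monomial can share a row or share a column; and on the diagonal, using $x_{i,i} = -\sum_{j \neq i} x_{i,j}$ (from the row-sum relation applied with the column relations) one can rewrite any $x_{i,i}$ away, and moreover $x_{i,i}^2 \equiv 0$ after expansion. What survives is exactly: squarefree products of off-diagonal upper-triangular variables, no two sharing an index — and such a product $\prod x_{i_k,j_k}$ with all $2k$ indices distinct is precisely $\mmm(w)$ for the partial matching pairing $i_k \leftrightarrow j_k$. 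The remaining work is to show that a product indexed by a \emph{partial} matching (not using all of $[n]$) is a linear combination of $\mmm(w)$'s for genuine elements $w \in \MMM_n$ (i.e. involutions, which may have fixed points but no further structure) — here the relation $\mmm(w) = \mmm(w) \cdot 1$ together with a row-sum relation $x_{k,1}+\cdots+x_{k,n}$ at an unused index $k$ lets me introduce a new pair at the cost of monomials that are either already of the desired form or vanish; a careful induction on the number of unmatched indices closes this. This last combinatorial rewriting is the step I expect to be the main obstacle, since one must check that every term produced stays squarefree, index-disjoint, and upper-triangular, and that the induction genuinely terminates.

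For linear independence, I would use Lemma 2.10 (basis transfer): since the $\mmm(w)$ are homogeneous, it is enough to show their images are linearly independent in $\CC[\xxx_{n\times n}]/\II(\MMM_n) \cong \CC[\MMM_n]$, i.e. as \emph{functions} on the point set $\MMM_n$. Evaluating $\mmm(w)$ at the permutation matrix of $v \in \MMM_n$ gives $\mmm(w)(v) = \prod_{i < w(i)} v_{i,w(i)}$, which is $1$ if every $2$-cycle of $w$ is also a $2$-cycle of $v$ — that is, if the $2$-cycle set of $w$ is contained in that of $v$ — and $0$ otherwise. Ordering $\MMM_n$ by, say, number of $2$-cycles (or any linear extension of containment of $2$-cycle sets), the evaluation matrix $\big(\mmm(w)(v)\big)_{w,v}$ is triangular with $1$'s on the diagonal, hence invertible; so the $\mmm(w)$ are linearly independent as functions. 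Combined with the spanning bound and Lemma 2.11, this yields $I_n^{\MMM} = \gr\,\II(\MMM_n)$ and that $\{\mmm(w) : w \in \MMM_n\}$ is a basis of $R(\MMM_n)$, completing the proof.
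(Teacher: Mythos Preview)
Your overall strategy is the paper's: show the matching monomials span $\CC[\xxx_{n\times n}]/I_n^{\MMM}$, then use $\dim R(\MMM_n) = |\MMM_n|$ and Lemma~\ref{lem:matching-ideal-containment} to force equality throughout the chain
\[
|\MMM_n| = \dim R(\MMM_n) \leq \dim \CC[\xxx_{n\times n}]/I_n^{\MMM} \leq |\MMM_n|.
\]
Two points deserve comment.

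First, your ``remaining work'' step rests on a misconception and is unnecessary. Once you reach a squarefree product $\prod_k x_{i_k,j_k}$ of strictly upper-triangular variables with all $2d$ indices distinct, you are finished: the involution $w=(i_1,j_1)\cdots(i_d,j_d)$, fixing the remaining elements of $[n]$, \emph{is} an element of $\MMM_n$, and the product equals $\mmm(w)$ on the nose. There is nothing to complete. Your proposed mechanism for introducing further pairs would fail anyway: in $\CC[\xxx_{n\times n}]/I_n^{\MMM}$ the row sum $x_{k,1}+\cdots+x_{k,n}$ is congruent to $0$, not $1$, so multiplying $\mmm(w)$ by it yields $0$ rather than $\mmm(w)$. (A subtlety you do gloss over in the reductions: ``no two variables share a row or column'' does not by itself force all indices distinct, e.g.\ $x_{1,2}x_{2,3}$ survives; one must flip $x_{1,2}\equiv x_{2,1}$ and then kill the row product $x_{2,1}x_{2,3}$. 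The paper organizes this cleanly by inducting on the number of diagonal variables in a triangular monomial.)

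Second, your separate linear-independence argument by evaluation on $\MMM_n$ is correct but redundant: a spanning set of size $|\MMM_n|$ in a space of dimension $|\MMM_n|$ is automatically a basis, and the paper uses only this.
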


\begin{proof}
    We show that $\{ \mmm(w) \,:\, w \in \MMM_n \}$ descends to a spanning set of the vector space $\CC[\xxx_{n \times n}]/I^\MMM_n$. This will establish that
    \begin{equation}
    \label{eq:matching-inequality-chain}
        |\MMM_n| = \dim R(\MMM_n) = \dim \CC[\xxx_{n \times n}]/\gr \, \II(\MMM_n) \leq \dim \CC[\xxx_{n \times n}]/I^\MMM_n \leq |\MMM_n|
    \end{equation}
    where the first equality follows from the orbit harmonics isomorphism \eqref{eq:orbit-harmonics-isomorphisms}, the second equality is the definition of $R(\MMM_n)$, the following inequality is a consequence of Lemma~\ref{lem:matching-ideal-containment}, and the last inequality is our spanning claim. This chain of (in)equalities will then show that $R(\MMM_n) = \CC[\xxx_{n \times n}]/I^\MMM_n$ and that $\{ \mmm(w) \,:\, w \in \MMM_n \}$ descends to a basis of $R(\MMM_n)$.

    It remains to show that $\{ \mmm(w) \,:\, w \in \MMM_n \}$ descends to a spanning set of $\CC[\xxx_{n \times n}]/I^\MMM_n$. Consider a monomial $m$ in the variable set $\xxx_{n \times n}$. We want to show that $m$ lies in the span of $\{ \mmm(w) \,:\, w \in \MMM_n \}$ modulo $I^\MMM_n$. 
    Write
    \begin{equation}
        m = x_{i_1,j_1}x_{i_2,j_2} \dots x_{i_k,j_k}
    \end{equation}
    for some $1 \leq i_1, j_1, i_2, j_2, \dots, i_k, j_k \leq n$. Since $x_{i,j}^2 \in I^\MMM_n$ and $x_{i,j} \equiv x_{j,i} \mod I^\MMM_n$ for all $1 \leq i, j \leq n$ we may assume that 
    \begin{quote}
        $(\triangle)$ the monomial $m$ is squarefree, and every variable $x_{i_s,j_s}$ which appears in $m$ satisfies $i_s \leq j_s$.
    \end{quote}
    We call a monomial in $\xxx_{n \times n}$ satisfying this condition {\em triangular}.
    We induct on the number $0 \leq r \leq k$ of factors $x_{i_s,j_s}$ of the triangular monomial $m$ which satisfy $i_s = j_s$.

    First assume that $r = 0$, so that $i_s \neq j_s$ for all $1 \leq s \leq k$. If there exist $1 \leq l \neq l' \leq k$ with $i_l = i_{l'}$ then $x_{i_l,j_l} \cdot x_{i_{l'},j_{j'}} \in I^\MMM_n$ so that $m \in I^\MMM_n$ certainly lies in the span of $\{ \mmm(w) \,:\, w \in \MMM_n \}$ modulo $I^\MMM_n$. Similarly, we are done if there exist $1 \leq l \neq l' \leq k$ with $j_l = j_{l'}$.
    If there exist $1 \leq l,l' \leq k$ with $i_l = j_{l'}$, then using $x_{i,j} \equiv x_{j,i} \mod I^\MMM_n$, we have 
    \begin{equation}
        x_{i_l,j_l} x_{i_{l'},j_{l'}} \equiv x_{j_l,i_l}x_{i_{l'},j_{l'}} \equiv 0 \mod I^\MMM_n,
    \end{equation}
    so that $m \in I^\MMM_n$ in this case, as well. We may therefore assume that the indices $i_1, j_1, i_2, j_2, \dots, i_k, j_k$ are distinct so that 
    $w = (i_1,j_1)(i_2,j_2) \dots (i_k,j_k)$ is the reduced cycle form of an involution  $w \in \symm_n$. By definition, we have
    $m = \mmm(w)$ which completes the base case.

    If $r > 0$ the triangular monomial $m$ contains the variable $x_{i,i}$ for some $i$. We may use the congruences
    \begin{equation}
        x_{i,i} \equiv - \sum_{\substack{j \, \neq \, i \\ 1 \, \leq \, j \, \leq \, n}} x_{i,j} \equiv - \sum_{\substack{j \, < \, i \\ 1 \, \leq \, j \, \leq \, n}} x_{j,i} - \sum_{\substack{j \, > \, i \\ 1 \, \leq \, j \, \leq \, n}} x_{i,j} \mod I^\MMM_n,
    \end{equation}
    to rewrite $m$ modulo $I^\MMM_n$ as the summation of triangular monomials of the same degree, but with one fewer variable on the main diagonal. By induction, we see that $m$ lies in the span of $\{ \mmm(w) \,:\, w \in \MMM_n \}$.
\end{proof}

For example, if $n = 4$ we have the matching monomial basis of $R(\MMM_4)$ given by
$$\{  1 \quad \mid \quad x_{1,2}, \, \, x_{1,3}, \, \, x_{1,4}, \, \, x_{2,3}, \, \,x_{2,4}, \, \, x_{3,4} \quad \mid \quad x_{1,2}  x_{3,4}, \, \, x_{1,3}  x_{2,4}, \, \,  x_{1,4}  x_{2,3} \}$$
where the bars indicate separation by degree.
We record the generating set of $\gr \, \II(\MMM_n)$ derived obtained in the above proof. It is precisely the set of highest degree components of the `natural' generating set of $\II(\MMM_n)$ shown in the proof of Lemma~\ref{lem:matching-ideal-containment}.

\begin{proposition}
    \label{prop:matching-ideal-equality}
    We have $I^\MMM_n = \gr \, \II(\MMM_n)$.
\end{proposition}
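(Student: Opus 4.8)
The plan is to deduce this equality directly from the work already carried out in the proof of Theorem~\ref{thm:matching-monomial-basis}, which secretly establishes the missing containment at the level of dimensions. By Lemma~\ref{lem:matching-ideal-containment} we already have $I^\MMM_n \subseteq \gr\,\II(\MMM_n)$, so it remains only to check that these two homogeneous ideals cut out quotient rings of the same (finite) dimension: a containment of ideals whose quotients are finite-dimensional of equal dimension is forced to be an equality, since the induced surjection $\CC[\xxx_{n \times n}]/I^\MMM_n \twoheadrightarrow \CC[\xxx_{n \times n}]/\gr\,\II(\MMM_n)$ is then an isomorphism.

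First I would recall the chain of (in)equalities~\eqref{eq:matching-inequality-chain} proved alongside Theorem~\ref{thm:matching-monomial-basis}: the orbit harmonics isomorphism~\eqref{eq:orbit-harmonics-isomorphisms} gives $\dim R(\MMM_n) = \dim \CC[\xxx_{n \times n}]/\gr\,\II(\MMM_n) = |\MMM_n|$, while the spanning statement for the matching monomials $\{ \mmm(w) : w \in \MMM_n \}$ modulo $I^\MMM_n$ gives $\dim \CC[\xxx_{n \times n}]/I^\MMM_n \leq |\MMM_n|$, and the containment of Lemma~\ref{lem:matching-ideal-containment} gives the reverse inequality $\dim \CC[\xxx_{n \times n}]/\gr\,\II(\MMM_n) \leq \dim \CC[\xxx_{n \times n}]/I^\MMM_n$. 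Stringing these together forces every inequality to be an equality; in particular $\dim \CC[\xxx_{n \times n}]/I^\MMM_n = \dim \CC[\xxx_{n \times n}]/\gr\,\II(\MMM_n)$, and combining this with the ideal containment $I^\MMM_n \subseteq \gr\,\II(\MMM_n)$ yields $I^\MMM_n = \gr\,\II(\MMM_n)$.

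I do not expect any genuine obstacle here: all of the substantive content — the explicit spanning set for $\CC[\xxx_{n \times n}]/I^\MMM_n$ obtained by the monomial-reduction induction (using $x_{i,j}^2 \equiv 0$, $x_{i,j} \equiv x_{j,i}$, the row and column product relations, and the diagonal substitution $x_{i,i} \equiv -\sum_{j \neq i} x_{i,j}$), together with the dimension count $|\MMM_n|$ — is exactly what the proof of Theorem~\ref{thm:matching-monomial-basis} already supplies. This proposition is essentially the bookkeeping remark that those computations pin down $\gr\,\II(\MMM_n)$ on the nose, so the write-up amounts to citing~\eqref{eq:matching-inequality-chain} and Lemma~\ref{lem:matching-ideal-containment} and invoking the equal-dimension/surjection argument above.
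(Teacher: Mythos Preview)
Your proposal is correct and follows essentially the same argument as the paper: both invoke the chain~\eqref{eq:matching-inequality-chain} established in the proof of Theorem~\ref{thm:matching-monomial-basis} to force $\dim \CC[\xxx_{n \times n}]/I^\MMM_n = \dim \CC[\xxx_{n \times n}]/\gr\,\II(\MMM_n)$, then combine this with the containment of Lemma~\ref{lem:matching-ideal-containment} to conclude.
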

\begin{proof}
    In the proof of Theorem~\ref{thm:matching-monomial-basis}, we established Equation~\ref{eq:matching-inequality-chain}. Note that $|\MMM_n|$ on both ends of the chain forces all inequalities to be equalities. Specifically, we have  
    \begin{equation}
        \dim \CC[\xxx_{n \times n}]/\gr \, \II(\MMM_n) = \dim \CC[\xxx_{n \times n}]/I^\MMM_n.
    \end{equation}
    This together with Lemma~\ref{lem:matching-ideal-containment} gives the equality of the two ideals.
\end{proof}

\subsection{Module structure}
The basis of $R(\MMM_n)$ in Theorem~\ref{thm:matching-monomial-basis} is closed under the action $w \cdot x_{i,j} = x_{w(i), w(j)}$ of $\symm_n$. This is not a common property for `nice' bases of quotient rings with group actions. Thanks to this special property, we may easily compute the graded $\symm_n$-structure of $R(\MMM_n)$.

\begin{theorem}
    \label{thm:matching-frobenius}
    The graded Frobenius image of $R(\MMM_n)$ is given by
    $$\grFrob(R(\MMM_n); q) = \sum_{k \, = \, 0}^{\lfloor n/2 \rfloor} q^k \cdot s_k[s_2] \cdot s_{n-2k}.$$
\end{theorem}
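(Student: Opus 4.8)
The plan is to leverage the remark immediately preceding the statement: the matching monomial basis $\{\mmm(w) : w \in \MMM_n\}$ of $R(\MMM_n)$ is permuted by $\symm_n$. The first step is to verify this carefully. For $v \in \symm_n$ and $w \in \MMM_n$ we compute, in $\CC[\xxx_{n \times n}]$,
\[
v \cdot \mmm(w) \;=\; v \cdot \!\!\prod_{i \,<\, w(i)}\!\! x_{i,w(i)} \;=\; \prod_{i \,<\, w(i)} x_{v(i),\,v(w(i))}.
\]
Each unordered pair $\{v(i), v(w(i))\}$ with $i < w(i)$ is a $2$-cycle of the involution $vwv^{-1}$, and every $2$-cycle of $vwv^{-1}$ arises exactly once this way. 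Using the relation $x_{a,b} \equiv x_{b,a} \bmod \gr\,\II(\MMM_n)$ (Proposition~\ref{prop:matching-ideal-equality}) to renormalize indices, we conclude $v \cdot \mmm(w) \equiv \mmm(vwv^{-1})$ in $R(\MMM_n)$. Thus $\symm_n$ acts on $R(\MMM_n)$ by permuting the basis $\{\mmm(w)\}$, equivariantly identified with the conjugation action on $\MMM_n$; and this identification is graded since $\deg \mmm(w)$ is the number of $2$-cycles of $w$. Hence $R(\MMM_n)_k$ has basis $\{\mmm(w) : w \in \MMM_{n,n-2k}\}$, so $R(\MMM_n)_k \cong_{\symm_n} \CC[\MMM_{n,n-2k}]$, and
\[
\grFrob(R(\MMM_n); q) \;=\; \sum_{k=0}^{\lfloor n/2\rfloor} q^k \cdot \Frob\bigl(\CC[\MMM_{n,n-2k}]\bigr).
\]

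It remains to show $\Frob(\CC[\MMM_{n,n-2k}]) = s_k[s_2]\cdot s_{n-2k}$. All involutions of cycle type $(2^k, 1^{n-2k})$ are $\symm_n$-conjugate, so $\MMM_{n,n-2k}$ is a single $\symm_n$-orbit; equivalently, it is $\symm_n$-equivariantly the set of pairs $(S,\mu)$ where $S \subseteq [n]$ has size $2k$ and $\mu$ is a perfect matching of $S$. Taking the base point $w_0 = (1,2)(3,4)\cdots(2k-1,2k)$, its stabilizer is $(\symm_k \wr \symm_2) \times \symm_{n-2k}$ — the within-pair swaps together with permutations of the $k$ pairs, acting on $\{1,\dots,2k\}$, and the symmetric group on the fixed points $\{2k+1,\dots,n\}$. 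By transitivity of induction, $\CC[\MMM_{n,n-2k}] \cong \bigl(\Ind_{\symm_k\wr\symm_2}^{\symm_{2k}}\one_{\symm_k\wr\symm_2}\bigr) \circ \one_{\symm_{n-2k}}$. Lemma~\ref{lem:plethysm-interpretation}, with the roles of $n,m$ played by $k,2$ (so $\Pi_{k,2} = \PM_{2k}$), gives that the first factor has Frobenius image $s_k[s_2]$; since induction product corresponds to multiplication of symmetric functions, $\Frob(\CC[\MMM_{n,n-2k}]) = s_k[s_2]\cdot s_{n-2k}$. Substituting into the displayed sum proves the theorem.

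The main obstacle here is mild: everything rests on the permutation-basis observation, which the authors have already flagged. The two points that genuinely require care are (i) confirming in the first step that after the substitution $v\cdot x_{i,j} = x_{v(i),v(j)}$ one does land, modulo $x_{a,b}-x_{b,a}$, on the correctly normalized monomial $\mmm(vwv^{-1})$ — so that the action is literally a permutation of the basis and respects the grading — and (ii) correctly identifying the stabilizer of $w_0$ as $\symm_k \wr \symm_2$ (in the excerpt's convention for wreath products) rather than a larger or smaller subgroup, so that Lemma~\ref{lem:plethysm-interpretation} applies verbatim.
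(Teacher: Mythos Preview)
Your proof is correct and follows essentially the same approach as the paper's: both identify $R(\MMM_n)_k$ with the permutation module $\CC[\MMM_{n,n-2k}]$ via the matching monomial basis, then compute its Frobenius image as $s_k[s_2]\cdot s_{n-2k}$ using the induction product decomposition along $\symm_{2k}\times\symm_{n-2k}$ and Lemma~\ref{lem:plethysm-interpretation}. Your version spells out more carefully the verification that $v\cdot\mmm(w)\equiv\mmm(vwv^{-1})$ (using $x_{a,b}\equiv x_{b,a}$) and phrases the induction step via the stabilizer of $w_0$, but these are cosmetic differences rather than a genuinely different route.
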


\begin{proof}
    By Theorem~\ref{thm:matching-monomial-basis}, the degree $k$ piece $R(\MMM_n)_k$ of $R(\MMM_n)$ is isomorphic to the permutation action of $\symm_n$ on the vector space 
    \begin{equation}
        V_{n,k} := \mathrm{span}_\CC \{ \mu \text{ a matching on $[n]$} \,:\, \mu \text{ has exactly $k$ matched pairs} \}.
    \end{equation}
    It is not hard to see that $V_{n,k}$ decomposes as the induction product
    \begin{equation}
        V_{n,k} \cong \Ind_{\symm_{2k} \times \symm_{n-2k}}^{\symm_n} (V_{2k,k} \otimes V_{n-2k,0}) = V_{2k,k} \circ V_{n-2k,0} = V_{2k,k} \circ \one_{\symm_{n-2k}}.
    \end{equation}
    Lemma~\ref{lem:plethysm-interpretation} implies that $\Frob(V_{2k,k}) = s_k[s_2]$. Since $\Frob(\one_{\symm_{n-2k}}) = s_{n-2k}$, we are done.
\end{proof}

The explicit graded decomposition of $R(\MMM_n)$ into irreducibles is easily obtained from Equation~\eqref{eq:ple-exp}, Theorem~\ref{thm:matching-frobenius}, and the {\em Pieri Rule}
\begin{equation}
\label{eq:pieri-rule}
    s_\lambda \cdot s_b = \sum_{\nu} s_\nu
\end{equation}
where the sum is over partitions $\nu$ of such that $\lambda \subseteq \nu$ and the difference $\nu - \lambda$ of Young diagrams consists of $b$ boxes, no two of which share a column. For example, if $n = 6$ we have
\begin{small}
\begin{align*}
    \grFrob(\MMM_6;q) &= q^0 \cdot s_6 + q^1 \cdot s_1[s_2] \cdot s_4 + q^2 \cdot s_2[s_2] \cdot s_2 + q^3 \cdot s_3[s_2] \\
    &= q^0 \cdot s_6 + q^1 \cdot s_2 \cdot s_4 + q^2 \cdot (s_4 + s_{22}) \cdot s_2 + q^3 \cdot (s_6 + s_{42} + s_{222}) \\
    &= q^0 \cdot s_6 + q^1 \cdot (s_6 + s_{51} + s_{42}) + q^2 \cdot (s_6 + s_{51} + 2 s_{42} + s_{321} + s_{222})  + q^4 \cdot (s_6 + s_{42} + s_{222}).
\end{align*}
\end{small}
The Hilbert series of $R(\MMM_n)$ is an easy consequence of Theorem~\ref{thm:matching-monomial-basis}. Recall the double factorial $(2d-1)!! := (2d-1) \cdot (2d - 3) \cdot \cdots \cdot 1$.

\begin{corollary}
    \label{cor:matching-hilb}
    The Hilbert series of $R(\MMM_n)$ is given by
    $$\Hilb(R(\MMM_n);q) = \sum_{d \, = \, 0}^{\lfloor n/2 \rfloor} {n \choose 2d} \cdot (2d-1)!! \cdot q^d.$$
\end{corollary}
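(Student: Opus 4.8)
The plan is to read off the Hilbert series directly from the monomial basis furnished by Theorem~\ref{thm:matching-monomial-basis}, so essentially no new ideas are needed. Since $\{\mmm(w) : w \in \MMM_n\}$ descends to a vector space basis of $R(\MMM_n)$, and since $\mmm(w) = \prod_{i < w(i)} x_{i,w(i)}$ has degree equal to the number of $2$-cycles of $w$, the dimension of the degree-$d$ piece $R(\MMM_n)_d$ equals the number of involutions in $\symm_n$ with exactly $d$ two-cycles (equivalently, matchings on $[n]$ with exactly $d$ matched pairs). Therefore $\Hilb(R(\MMM_n);q) = \sum_{d \geq 0} \#\{w \in \MMM_{n,n-2d}\} \cdot q^d$, and the task reduces to the standard enumeration of such involutions.

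The key step is the count: an involution with exactly $d$ two-cycles is obtained by choosing the $2d$ non-fixed points out of $[n]$ in $\binom{n}{2d}$ ways and then choosing a perfect matching on those $2d$ points, of which there are $(2d-1)!!$. This gives $\binom{n}{2d}(2d-1)!!$, and the sum runs over $0 \leq d \leq \lfloor n/2 \rfloor$ since we need $2d \leq n$. I would state this as a one-line combinatorial argument, perhaps recalling that $|\PM_{2d}| = (2d-1)!!$ is itself a special case of Corollary~\ref{cor:matching-hilb} (or can be cited as classical), and that summing over $d$ recovers $|\MMM_n| = \sum_d \binom{n}{2d}(2d-1)!!$ consistently with Theorem~\ref{thm:matching-monomial-basis}.

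There is no real obstacle here; the only thing to be careful about is matching the indexing convention (degree $d$ = number of matched pairs, not number of non-fixed points) so that the exponent of $q$ and the binomial coefficient $\binom{n}{2d}$ line up correctly, and ensuring the upper limit $\lfloor n/2 \rfloor$ is stated (the double factorial is interpreted as $(-1)!! = 1$ when $d = 0$). I would write the proof in two or three sentences: invoke Theorem~\ref{thm:matching-monomial-basis} to identify $\dim R(\MMM_n)_d$ with the number of $d$-pair matchings on $[n]$, then perform the elementary count $\binom{n}{2d}(2d-1)!!$, and conclude.
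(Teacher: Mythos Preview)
Your proposal is correct and follows essentially the same approach as the paper: the paper's proof is the single sentence ``Apply Theorem~\ref{thm:matching-monomial-basis} and count matching monomials by degree,'' and you have simply spelled out that count in detail.
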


\begin{proof}
    Apply Theorem~\ref{thm:matching-monomial-basis} and count matching monomials by degree.
\end{proof}

In Figure~\ref{fig:matching-hilbert-series}, we give the histogram of $\Hilb(R(\MMM_n);q)$ when $n=200$. The horizontal axis corresponds to degrees $d$, and the height of each bar represents the corresponding $\dim R(\MMM_n)_d$.

\begin{figure}
    \centering
    \includegraphics[width=10cm]{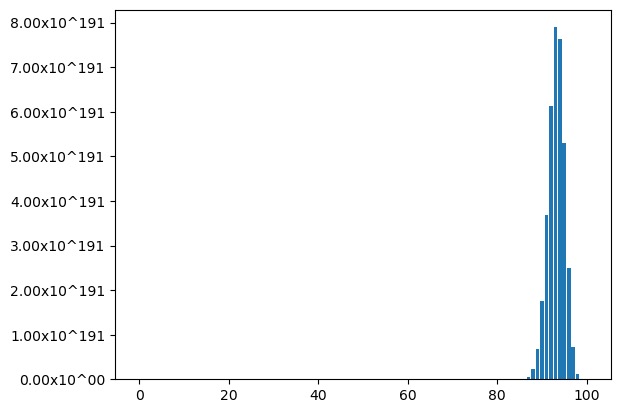}
    \caption{Coefficients of $\Hilb(R(\MMM_n);q)$ when $n=200$}
    \label{fig:matching-hilbert-series}
\end{figure}

\section{The Perfect Matching Locus}
\label{sec:Perfect}
In this section, assume $n$ is even. We consider the locus
$$\PM_n = \{ w \in \symm_n \,:\, w^2 = 1, \text{ $w$ has no fixed points} \}$$
of perfect matchings (or fixed-point-free involutions) inside $\MMM_n$. 
Our study of $R(\PM_n)$ is more indirect than our study of $R(\MMM_n)$. In particular, the authors do not know an explicit basis of $R(\PM_n)$. 

\subsection{$\eta$-annihilation}
As in the case of $\MMM_n$, it will be useful to understand the defining ideal $\gr \, \II(\PM_n)$ of $R(\PM_n)$. This ideal will turn out to have the following generating set.
\begin{defn}
    \label{def:pm-ideal}
    Assume $n > 0$ is even.
    Let $I^{\PM}_n \subseteq \CC[\xxx_{n \times n}]$ be the ideal
    $$I^{\PM}_n := I^\MMM_n + ( x_{i,i} \,:\, 1 \leq i \leq n) = \gr \, \II(\MMM_n) + (x_{i,i} \,:\, 1 \leq i \leq n).$$
\end{defn}

We will show that $I^\PM_n = \gr \, \II(\PM_n)$ as ideals in $\CC[\xxx_{n \times n}]$. As in the case of $\MMM_n$, one containment is straightforward.

\begin{lemma}
    \label{lem:pm-ideal-containment}
    For $n > 0$ even we have $I^{\PM}_n \subseteq \gr \, \II(\PM_n)$.
\end{lemma}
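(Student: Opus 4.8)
The plan is to mimic the proof of Lemma~\ref{lem:matching-ideal-containment}: exhibit an explicit generating set for the honest vanishing ideal $\II(\PM_n)$ whose top-degree parts are exactly the listed generators of $I^{\PM}_n$. Recall that $\PM_n$ consists of the $n \times n$ symmetric permutation matrices with no $1$ on the diagonal, i.e. the $0/1$ symmetric matrices with exactly one $1$ in each row and column and all diagonal entries $0$. So $\II(\PM_n)$ contains all the polynomials listed in the proof of Lemma~\ref{lem:matching-ideal-containment} — namely $x_{i,j}(x_{i,j}-1)$, the row sums $x_{i,1}+\cdots+x_{i,n}-1$, the column sums $x_{1,j}+\cdots+x_{n,j}-1$, the off-row products $x_{i,j} x_{i,j'}$, the off-column products $x_{i,j} x_{i',j}$, and the symmetry differences $x_{i,j}-x_{j,i}$ — and in addition it contains the linear polynomials $x_{i,i}$ for $1 \le i \le n$, since every matrix in $\PM_n$ has vanishing diagonal.

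Next I would take top-degree parts. By Lemma~\ref{lem:matching-ideal-containment}'s computation, $\tau(x_{i,j}(x_{i,j}-1)) = x_{i,j}^2$, and the top parts of the other five families are precisely the five families of generators of $I^\MMM_n$ from Definition~\ref{def:matching-ideal} (the linear row/column sums are already homogeneous, the quadratic products are already homogeneous, and $\tau(x_{i,j}-x_{j,i}) = x_{i,j}-x_{j,i}$). Finally $\tau(x_{i,i}) = x_{i,i}$. Hence $\gr\,\II(\PM_n)$ contains all the generators of $I^\MMM_n$ together with all the $x_{i,i}$, and since by Proposition~\ref{prop:matching-ideal-equality} we have $I^\MMM_n = \gr\,\II(\MMM_n)$, this is exactly a generating set for $I^{\PM}_n$ as defined in Definition~\ref{def:pm-ideal}. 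Therefore $I^{\PM}_n \subseteq \gr\,\II(\PM_n)$.

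There is essentially no obstacle here: the only minor subtlety is to note, as Remark~\ref{rmk:gr-generation} warns, that in general $(\tau(f_1),\dots,\tau(f_r))$ can be strictly smaller than $\gr\,I$ — but here we only need the \emph{containment} $\subseteq$, and the containment $(\tau(f_1),\dots,\tau(f_r)) \subseteq \gr\,I$ always holds, so this direction is unconditional. (The reverse equality $I^{\PM}_n = \gr\,\II(\PM_n)$ is the substantive statement, presumably proved in a subsequent proposition via a dimension count analogous to Theorem~\ref{thm:matching-monomial-basis}, but that is not what this lemma asks for.) So the proof is just: list generators of $\II(\PM_n)$, pass to leading forms, invoke Proposition~\ref{prop:matching-ideal-equality}.

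\begin{proof}
    The locus $\PM_n = \{ P(w) \,:\, w \in \symm_n,\ w^2 = 1,\ w \text{ has no fixed points}\}$ consists of all $n \times n$ symmetric $0/1$ matrices with exactly one $1$ in each row and column and all diagonal entries equal to $0$. Consequently, for $1 \leq i,i',j,j' \leq n$, the following polynomials all lie in $\II(\PM_n)$:
    \begin{itemize}
        \item $x_{i,j}(x_{i,j}-1)$,
        \item $x_{i,1} + \cdots + x_{i,n} - 1$,
        \item $x_{1,j} + \cdots + x_{n,j} - 1$,
        \item $x_{i,j} \cdot x_{i,j'}$,
        \item $x_{i,j} \cdot x_{i',j}$,
        \item $x_{i,j} - x_{j,i}$,
        \item $x_{i,i}$.
    \end{itemize}
    The first six families are the polynomials appearing in the proof of Lemma~\ref{lem:matching-ideal-containment}; their top-degree homogeneous components are exactly the generators of $I^\MMM_n$ from Definition~\ref{def:matching-ideal}. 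The last family consists of the linear (hence homogeneous) polynomials $x_{i,i}$, whose top-degree components are themselves. Since the top-degree component of every element of $\II(\PM_n)$ lies in $\gr\,\II(\PM_n)$, we conclude that $\gr\,\II(\PM_n)$ contains all generators of $I^\MMM_n$ together with all of the $x_{i,i}$ for $1 \leq i \leq n$. By Proposition~\ref{prop:matching-ideal-equality} we have $I^\MMM_n = \gr\,\II(\MMM_n)$, so
    $$I^{\PM}_n = I^\MMM_n + (x_{i,i} \,:\, 1 \leq i \leq n) \subseteq \gr\,\II(\PM_n),$$
    as desired.
\end{proof}
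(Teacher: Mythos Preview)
Your proof is correct. The paper's own argument is a bit more economical: rather than re-listing the generators of $\II(\MMM_n)$ and taking top degrees, it simply observes that $\PM_n \subseteq \MMM_n$ forces $\II(\MMM_n) \subseteq \II(\PM_n)$ and hence $\gr\,\II(\MMM_n) \subseteq \gr\,\II(\PM_n)$, then appends the observation $x_{i,i} \in \II(\PM_n)$. This saves you from reproducing the work of Lemma~\ref{lem:matching-ideal-containment} and makes the invocation of Proposition~\ref{prop:matching-ideal-equality} unnecessary (indeed, in your version that invocation is already redundant: once you have shown that the Definition~\ref{def:matching-ideal} generators of $I^\MMM_n$ lie in $\gr\,\II(\PM_n)$, you have $I^\MMM_n \subseteq \gr\,\II(\PM_n)$ directly). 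But the underlying content is the same, and your argument is complete as written.
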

\begin{proof}
    Since $\PM_n\subseteq \MMM_n$ we have $\II(\MMM_n) \subseteq \II(\PM_n)$ and therefore  $\gr \, \II(\MMM_n)\subseteq \gr \, \II(\PM_n)$. Furthermore, we have  $x_{i,i}\in\II(\PM_n)$ for $1 \leq i \leq n$ since the involutions in $\PM_n$ have no fixed points. 
\end{proof}

Theorem~\ref{thm:matching-monomial-basis} gives bounds on the degrees of the quotient ring $\CC[\xxx_{n \times n}]/I_n^{\PM}$ by the ideal in Definition~\ref{def:pm-ideal}. Recall that $\eta_j = \sum_{w \in \symm_j} w \in \CC[\symm_n]$ acts on $\symm_n$-modules by symmetrization with respect to the first $j$ letters.

\begin{lemma}
    \label{lem:pm-symmetrizer-annihilation} Let $0 \leq j \leq n$ and assume $n$ is even.
   The symmetrizer $\eta_j \in \CC[\symm_j] \subseteq \CC[\symm_n]$ annihilates the degree $d$ part $(\CC[\xxx_{n \times n}]/I_n^\PM)_d$ of the ring $\CC[\xxx_{n \times n}]/I_n^\PM$ whenever $j > n-2d$. In particular, if $\lambda \vdash n$ and an irreducible $\symm_n$-module $V^\lambda$ appears in $(\CC[\xxx_{n \times n}]/I_n^\PM)_d$, we have $\lambda_1 \leq n - 2d$.
\end{lemma}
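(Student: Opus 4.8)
The plan is to exploit the containment $I^\PM_n \supseteq I^\MMM_n + (x_{i,i} : 1 \le i \le n)$ together with the explicit matching monomial basis of $R(\MMM_n) = \CC[\xxx_{n\times n}]/I^\MMM_n$ from Theorem~\ref{thm:matching-monomial-basis}. Since $I^\PM_n$ contains all the diagonal variables $x_{i,i}$, the quotient $\CC[\xxx_{n\times n}]/I^\PM_n$ is spanned by the images of those matching monomials $\mmm(w)$ whose underlying involution $w \in \MMM_n$ is fixed-point-free, i.e.\ by $\{\mmm(w) : w \in \PM_n\}$ together possibly with relations among them; in any case the degree $d$ part is spanned by (images of) squarefree products of $d$ upper-triangular off-diagonal variables $x_{i,j}$ with $i < j$ whose $2d$ indices are \emph{distinct}. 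The key combinatorial observation is that such a spanning monomial of degree $d$ only "uses" $2d$ of the $n$ indices, leaving $n - 2d$ indices untouched; this is exactly the mechanism that will force the symmetrizer to vanish.

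First I would make precise the statement about the degree $d$ part. By Lemma~\ref{lem:refined-orbit-harmonics} (or directly, since we are only asserting a spanning statement, not a basis), $(\CC[\xxx_{n\times n}]/I^\PM_n)_d$ is spanned by the images of squarefree monomials $\prod_{s=1}^d x_{i_s,j_s}$ with $i_s < j_s$ and all $2d$ indices $i_1,j_1,\dots,i_d,j_d$ distinct — this follows by rerunning the reduction argument in the proof of Theorem~\ref{thm:matching-monomial-basis} (using $x_{i,j}^2 \equiv 0$, $x_{i,j} \equiv x_{j,i}$, the row/column product relations, and now also $x_{i,i} \equiv 0$ rather than the diagonal-rewriting step). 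Fix such a monomial $m = \prod_{s=1}^d x_{i_s,j_s}$ and let $T = \{i_1,j_1,\dots,i_d,j_d\} \subseteq [n]$, a set of size $2d$. Now suppose $j > n - 2d$. Then $|T| = 2d > n - j$, so $T$ cannot be contained in $\{j+1,\dots,n\}$; equivalently $T \cap \{1,\dots,j\} \ne \emptyset$. Pick $t \in T \cap \{1,\dots,j\}$. I claim the $\symm_j$-orbit sum $\eta_j \cdot m = \sum_{u \in \symm_j} u \cdot m$ is zero in $\CC[\xxx_{n\times n}]/I^\PM_n$: in the orbit we may transpose $t$ with any index $t' \in \{1,\dots,j\} \setminus T$ (such a $t'$ need not exist if $\{1,\dots,j\} \subseteq T$), so I should instead argue as follows — pair up the permutations $u$ and $u \circ \tau$ where $\tau$ is a transposition chosen so that $u\cdot m$ and $(u\tau)\cdot m$ either coincide or together produce a term killed by a relation. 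Cleaner: since $t \in T$, the variable $x_{a,b}$ of $m$ containing $t$ gets sent by some $u$ to a variable with a repeated index or to $x_{\ell,\ell}$ under a suitable further transposition — I would phrase this via the standard fact that $\eta_j \cdot m = 0$ whenever $m$ is fixed (up to relations) by no $\symm_j$-symmetry that permutes an index appearing in $m$ with one that does not, and then invoke that $\eta_j$ annihilates a vector $v$ iff $v \notin V^{\symm_j}$-component is forced, matching the mechanism of Lemma~\ref{lem:eta-annihilation}.

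The honest way to organize the last step, and the one I would write up, is: $(\CC[\xxx_{n\times n}]/I^\PM_n)_d$ is a quotient of the permutation module on the set of $2d$-subsets-with-a-perfect-matching of $[n]$ (via $w \mapsto \mmm(w)$-type monomials restricted to $2d$ indices), hence a quotient of $\Ind_{\symm_{2d}\times\symm_{n-2d}}^{\symm_n}(M \otimes \one_{\symm_{n-2d}})$ for an appropriate $\symm_{2d}$-module $M$; by the Pieri rule / Frobenius reciprocity any irreducible $V^\lambda$ in such an induction product satisfies $\lambda_1 \ge n - 2d$ only if... wait — I want the reverse inequality, so I would instead use that every $\lambda$ appearing satisfies $\lambda \supseteq (n-2d)$ is \emph{false}; rather, $\Ind(M \otimes \one_{\symm_{n-2d}})$ has all constituents $\lambda$ with at most ... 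I would simply cite Lemma~\ref{lem:eta-annihilation}: having shown $\eta_j$ annihilates the whole degree $d$ part for $j > n-2d$, if $V^\lambda$ occurred with $\lambda_1 > n-2d$ then taking $j = \lambda_1$ contradicts Lemma~\ref{lem:eta-annihilation}, which says $\eta_{\lambda_1}\cdot V^\lambda \ne 0$. So the "in particular" clause is immediate once the annihilation is established, and the whole proof reduces to the annihilation claim.

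The main obstacle is the annihilation claim itself: showing $\eta_j \cdot m \equiv 0 \bmod I^\PM_n$ for every degree-$d$ spanning monomial $m$ when $j > n - 2d$. The clean argument: write $\eta_j = \eta_{j-1} \cdot (1 + s_{j-1} + s_{j-1}s_{j-2} + \cdots + s_{j-1}\cdots s_1)$ or, better, use that $T = \{\text{indices of } m\}$ has $|T \cap [j]| \ge 2d - (n-j) \ge 1$; pick $t \in T \cap [j]$ and let $x_{a,b}$ be the factor of $m$ with $t \in \{a,b\}$, say $t = a$ (the case $t=b$ is symmetric via $x_{a,b}\equiv x_{b,a}$). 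Among the $j$ letters $\{1,\dots,j\}$, consider the transposition $\tau = (a\, c)$ for each $c \in [j]$: if $c \in T$ then $\tau \cdot m$ has two equal row-indices or two equal column-indices or a diagonal variable, so $\tau \cdot m \in I^\PM_n$; if $c \notin T$, then $m$ and $\tau\cdot m$ are distinct triangular monomials. Group $\symm_j = \bigsqcup \langle \tau\rangle$-cosets appropriately; the upshot is that in $\sum_{u\in\symm_j} u\cdot m$, the terms where the image of $t$ collides with another index of $T$ (which happens for a positive fraction, and whose sum lies in $I^\PM_n$) must be balanced against... — actually the cleanest finish avoids cancellation entirely: $\eta_j \cdot m = \sum_u u\cdot m$, and I partition $\symm_j$ by the value $u(t) =: \ell$. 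For $u$ with $u(\{$indices of $m$ in $[j]\})$ hitting a repeat, $u\cdot m \in I^\PM_n$. Whether a genuine cancellation is needed depends on whether $[j]\subseteq T$ is possible; when $j \le 2d$ this can occur, and then one does need a sign-free grouping argument. I expect to handle it by the substitution $x_{i,i} \equiv 0$ together with $x_{i,j}x_{i,j'} \equiv 0$: in $\eta_j \cdot m$, collect terms by which pair of $[j]$-indices of $m$ are "crossed"; I would show every term either lands in $I^\PM_n$ outright or cancels in pairs coming from a fixed-point-free involution of the relevant index set — this parity/pairing bookkeeping is the technical heart, and I would present it as a short lemma that $\eta_j$ kills any monomial whose index set meets $[j]$, proved by induction on $|T \cap [j]|$ using the identity $\eta_j = \eta_{j-1}(1 + s_{j-1} + s_{j-1}s_{j-2}+\cdots)$.
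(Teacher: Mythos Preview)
Your framework is correct: the matching monomials $\mmm(w)$ with $d$ two-cycles span the degree $d$ part, and the ``in particular'' clause follows from Lemma~\ref{lem:eta-annihilation} once annihilation is established. The gap is in the annihilation claim itself. All of your attempted mechanisms --- pairing $u$ with $u\tau$, grouping by $u(t)$, induction on $|T\cap[j]|$ via the coset factorization of $\eta_j$ --- rely only on the relations $x_{i,i}\equiv 0$, $x_{i,j}x_{i,j'}\equiv 0$, $x_{i,j}\equiv x_{j,i}$. But $\eta_j\cdot m$ is \emph{not} in the ideal generated by those relations alone. For instance, with $n=4$, $d=1$, $j=3$, $m=x_{1,2}$, one computes $\eta_3\cdot x_{1,2}\equiv 2(x_{1,2}+x_{1,3}+x_{2,3})$ modulo the symmetry relation, and this degree-$1$ element is visibly nonzero modulo the diagonal variables and the degree-$2$ product relations. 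There is no sign-free pairing or cancellation available here because $\eta_j$ is a symmetrizer, not an antisymmetrizer.

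The missing ingredient is the \emph{linear} row/column sum relations $\sum_k x_{i,k}\equiv 0$ in $I^\PM_n$, which you never invoke in the annihilation argument. The paper uses these to ``flip'' the range of the summing functions: writing $\eta_j\cdot\mmm(w)$ (up to scalar) as $\sum_{f:T\to[j]}\prod_k x_{f(i_k),f(j_k)}$ with $T=S\cap[j]$, one factors the sum over each coordinate and uses the row/column sums to replace each $\sum_{g:T_k\to[j]}$ by $\pm\sum_{g:T_k\to[n]\setminus[j]}$, landing on $\sum_{f:T\to([n]\setminus[j])\setminus(S\setminus T)}\prod_k x_{f(i_k),f(j_k)}$. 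Now the target set has size $n-j-(2d-|T|)<|T|$, so \emph{every} $f$ is non-injective and every term lies in $I^\PM_n$ via a diagonal variable or a row/column product. In the toy example above this flip turns $\sum_{a,b\in[3]}x_{a,b}$ into $x_{4,4}\equiv 0$. Without this step your argument does not close.
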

\begin{proof}
    The second statement follows from the first part and Lemma~\ref{lem:eta-annihilation}. We prove the first statement as follows.
    
    Since $I^\MMM_n\subseteq I^\PM_n$, by Theorem~\ref{thm:matching-monomial-basis} the set $\{\mmm(w):\text{$w\in\MMM_n$ has exactly $d$ $2$-cycles}\}$ of matching monoimals descends to a spanning set of $(\CC[\xxx_{n \times n}]/I_n^\PM)_d$. Therefore it suffices to show that \[\eta_j\cdot\mmm(w)\equiv 0 \mod{I_n^\PM}\] where $w\in\MMM_n$ has $d$ $2$-cycles and $j>n-2d$. 
    
    Write $\mmm(w)=\prod_{k=1}^{d}x_{i_k,j_k}$ and let $S=\{i_1,j_1,\dots,i_d,j_d\},T=S\cap [j]$. We have that
    \begin{equation}
    \label{eq:preliminary-eta-equality}
        \eta_j\cdot\mmm(w)\doteq\sum_{f:T\rightarrow [j]}\prod_{k=1}^{d}x_{f(i_k),f(j_k)}\mod{I_n^\PM},
    \end{equation} where $\doteq$ denotes equality up to a scalar multiple, and the summation is over all functions $f: T \to [j]$  with the convention  $f(\ell)=\ell$ if $\ell\not\in T$. The verification of \eqref{eq:preliminary-eta-equality} uses the fact that each diagonal variable $x_{i,i}$, each row product $x_{i,j}\cdot x_{i,j^{\prime}}$, and each column product $x_{i,j}\cdot x_{i^{\prime},j}$ lie in $I_n^\PM$.
    We want to show that 
    \begin{equation}\label{eq:PM-vanish-1}
      \sum_{f:T\rightarrow [j]}\prod_{k=1}^{d}x_{f(i_k),f(j_k)}\equiv 0 \mod{I_n^\PM}.
    \end{equation} 
    Let $T_k=\{i_k,j_k\}\cap [j]$. We calculate
     \begin{multline}\label{eq:pm-ann-help}
        \sum_{f:T\rightarrow [j]}\prod_{k=1}^{d}x_{f(i_k),f(j_k)}=\prod_{k=1}^{d}\bigg(\sum_{g:T_k\rightarrow [j]}x_{g(i_k),g(j_k)}\bigg)\doteq\prod_{k=1}^{d}\bigg(\sum_{g:T_k\rightarrow [n]\setminus [j]}x_{g(i_k),g(j_k)}\bigg)\\
        =\sum_{f:T\rightarrow [n]\setminus [j]} \left(\prod_{k=1}^{d}x_{f(i_k),f(j_k)} \right)\equiv\sum_{f:T\rightarrow ([n]\setminus [j])\setminus (S\setminus T)} \left( \prod_{k=1}^{d}x_{f(i_k),f(j_k)} \right)\mod{I^\PM_n}
     \end{multline} with the convention  $g(\ell)=\ell$ if $\ell\not\in T_k$, where the $\doteq$ is true modulo $I^\PM_n$ because each row sum $\sum_{j=1}^{n}x_{i,j}$ and each column sum $\sum_{i=1}^{n}x_{i,j}$ lies in $I^\PM_n$ and the congruence $\equiv$ holds because each row product and column product lies in $I^{\PM}_n$.

     In order to prove Equation~\eqref{eq:PM-vanish-1}, it remains to show 
     \begin{equation}\label{eq:PM-vanish-2}
         \sum_{f:T\rightarrow ([n]\setminus [j])\setminus (S\setminus T)} \left( \prod_{k=1}^{d}x_{f(i_k),f(j_k)} \right) \equiv 0\mod{I^\PM_n}.
     \end{equation}
     Note that the set $([n]\setminus [j])\setminus (S\setminus T)$ has cardinality 
    \begin{equation}
     \lvert([n]\setminus [j])\setminus (S\setminus T)\rvert=n-j-(2d-\lvert T\rvert)=n-j-2d+\lvert T\rvert <\lvert T\rvert.
     \end{equation}
     Every function $f:T\rightarrow ([n]\setminus [j])\setminus (S\setminus T)$ indexing the sum in \eqref{eq:PM-vanish-2} is therefore {\bf not} an injection. This given, Equation~\eqref{eq:PM-vanish-2} follows from the fact that each diagonal variable $x_{i,i}$, each row product $x_{i,j}\cdot x_{i,j^{\prime}}$ and each column product $x_{i,j}\cdot x_{i^{\prime},j}$ lie in $I_n^\PM$.
\end{proof}

\subsection{Module structure}
Lemma~\ref{lem:pm-symmetrizer-annihilation} gives a bound on the irreducible representations appearing in $\CC[\xxx_{n \times n}]/I^\PM_n$. This bound allows us to deduce the graded Frobenius image of $R(\PM_n)$ from that of $R(\MMM_n)$.

\begin{theorem}
    \label{thm:pm-frobenius}
    For $n > 0$ even, the graded Frobenius image of $R(\PM_n)$ is given by
   $$\grFrob(R(\PM_n);q) =  \sum_{\substack{\lambda \, \vdash \, n \\ \text{$\lambda$ even}}} q^{\frac{n-\lambda_1}{2}}\cdot s_\lambda.$$
\end{theorem}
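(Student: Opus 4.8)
The plan is to extract $\grFrob(R(\PM_n);q)$ from $\grFrob(R(\MMM_n);q)$ by combining the upper bound on first-row lengths from Lemma~\ref{lem:pm-symmetrizer-annihilation} with a matching lower bound obtained from the surjection $R(\MMM_n) \twoheadrightarrow \CC[\xxx_{n\times n}]/I^\PM_n$. First I would recall from Theorem~\ref{thm:matching-frobenius} and the Pieri Rule that
\[
\grFrob(R(\MMM_n);q) = \sum_{k=0}^{\lfloor n/2 \rfloor} q^k \cdot s_k[s_2] \cdot s_{n-2k} = \sum_{k=0}^{\lfloor n/2 \rfloor} q^k \sum_{\substack{\mu \, \vdash \, 2k \\ \mu \text{ even}}} s_\mu \cdot s_{n-2k},
\]
using Equation~\eqref{eq:ple-exp}. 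Expanding each $s_\mu \cdot s_{n-2k}$ by Pieri, every partition $\nu \vdash n$ that occurs has $\nu_1 \geq n-2k$ (the $n-2k$ added boxes lie in distinct columns, so at least one lands in the first row once $\mu_1 \le n-2k$, and in any case $\nu_1 \ge \max(\mu_1, n-2k) \ge n - 2k$). Conversely the only way to obtain $\nu_1 = n-2k$ exactly is to take $\mu$ with $\mu_1 \le n-2k$ and add all $n-2k$ new boxes to the first row; in particular $\nu = \mu + (n-2k)$-in-row-one is then forced, and this produces precisely the even partitions $\nu \vdash n$ with $\nu_1 = n-2k$, each with multiplicity one. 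So the ``leading'' piece $\{\grFrob(R(\MMM_n);q)\}_{\lambda_1 = n-2k}$ in degree $k$ equals $\sum_{\lambda \vdash n,\ \lambda \text{ even},\ \lambda_1 = n-2k} s_\lambda$, while all other contributions in degree $k$ have $\lambda_1 > n-2k$.

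Next I would set up the comparison of modules. From Lemma~\ref{lem:pm-ideal-containment} we have $I^\MMM_n = \gr\,\II(\MMM_n) \subseteq \gr\,\II(\PM_n)$, hence a graded $\symm_n$-equivariant surjection $R(\MMM_n) = \CC[\xxx_{n\times n}]/I^\MMM_n \twoheadrightarrow \CC[\xxx_{n\times n}]/\gr\,\II(\PM_n) = R(\PM_n)$; likewise $R(\MMM_n) \twoheadrightarrow \CC[\xxx_{n\times n}]/I^\PM_n$, and $I^\PM_n \subseteq \gr\,\II(\PM_n)$ gives a further surjection onto $R(\PM_n)$. Since all modules in sight are graded $\symm_n$-modules and $\CC[\symm_n]$ is semisimple, these surjections show that in each degree $d$, the multiplicity of $V^\lambda$ in $(\CC[\xxx_{n\times n}]/I^\PM_n)_d$ is at most that in $R(\MMM_n)_d$. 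On the other hand, Lemma~\ref{lem:pm-symmetrizer-annihilation} says that in degree $d$ only $V^\lambda$ with $\lambda_1 \le n-2d$ can occur in $\CC[\xxx_{n\times n}]/I^\PM_n$. Combining: $(\CC[\xxx_{n\times n}]/I^\PM_n)_d$ is a subquotient of the part of $R(\MMM_n)_d$ supported on $\lambda_1 \le n-2d$, and by the previous paragraph that part of $R(\MMM_n)_d$ consists exactly of $\{\grFrob(R(\MMM_n);q)\}_{\lambda_1 = n-2d}$ in degree $d$ (there is no $\lambda_1 < n-2d$ term in degree $d$, by the Pieri analysis). Hence in the Schur-positivity order,
\[
\Frob\big((\CC[\xxx_{n\times n}]/I^\PM_n)_d\big) \;\le\; \sum_{\substack{\lambda \, \vdash \, n,\ \lambda \text{ even} \\ \lambda_1 = n-2d}} s_\lambda .
\]

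Finally I would obtain the reverse inequality by a dimension count, which forces equality throughout. Summing the displayed bound over $d$ and using $\sum_{\lambda \vdash n,\ \lambda \text{ even}} 1 = |\PM_n|$ in terms of dimensions of irreducibles, we get $\dim \CC[\xxx_{n\times n}]/I^\PM_n \le \sum_{\lambda \text{ even}} \dim V^\lambda = \dim \CC[\PM_n] = \dim R(\PM_n)$, where the middle equality is the classical fact that $\Frob(\CC[\PM_n]) = s_{n/2}[s_2] = \sum_{\lambda \text{ even}} s_\lambda$ (Lemma~\ref{lem:plethysm-interpretation} and Equation~\eqref{eq:ple-exp}). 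But $I^\PM_n \subseteq \gr\,\II(\PM_n)$ gives $\dim R(\PM_n) \le \dim \CC[\xxx_{n\times n}]/I^\PM_n$. So all inequalities are equalities: $I^\PM_n = \gr\,\II(\PM_n)$ (this is Proposition~\ref{prop:pm-ideal-equality}), and degree-by-degree the Frobenius image of $R(\PM_n)_d$ equals $\sum_{\lambda \vdash n,\ \lambda \text{ even},\ \lambda_1 = n - 2d} s_\lambda$. Writing $d = \tfrac{n-\lambda_1}{2}$ and summing over $d$ yields exactly $\grFrob(R(\PM_n);q) = \sum_{\lambda \vdash n,\ \lambda \text{ even}} q^{(n-\lambda_1)/2} s_\lambda$, as claimed. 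I expect the main obstacle to be the bookkeeping in the Pieri step — verifying cleanly that the $\lambda_1 = n-2k$ part of $s_k[s_2]\cdot s_{n-2k}$ is multiplicity-free and equal to the sum of even $s_\lambda$ with $\lambda_1 = n-2k$, and that nothing in degree $k$ has $\lambda_1 < n-2k$ — since everything else is a formal consequence of semisimplicity plus the two ideal containments and the dimension count.
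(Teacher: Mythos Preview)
Your approach is correct and essentially identical to the paper's: use the surjections $R(\MMM_n)\twoheadrightarrow \CC[\xxx_{n\times n}]/I_n^{\PM}\twoheadrightarrow R(\PM_n)$ together with Pieri to force $\lambda_1\ge n-2d$, use Lemma~\ref{lem:pm-symmetrizer-annihilation} to force $\lambda_1\le n-2d$, and then pin down the grading from the known ungraded Frobenius image $s_{n/2}[s_2]$; your dimension count simply packages the last step together with Proposition~\ref{prop:pm-ideal-equality}. One small correction in the Pieri bookkeeping you flagged: when $\nu_1=n-2k$ the horizontal-strip condition $\nu_{i+1}\le\mu_i$ and $\sum_{i\ge 2}\nu_i=2k=|\mu|$ force $\nu_{i+1}=\mu_i$ for all $i$, so $\nu=(n-2k,\mu_1,\mu_2,\dots)$ is obtained by \emph{prepending} the part $n-2k$ to $\mu$, not by adding $n-2k$ boxes to the first row of $\mu$; with that fix your multiplicity-free claim and the bijection with even $\lambda$ having $\lambda_1=n-2k$ go through exactly as you intend.
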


\begin{proof}
    Definition~\ref{def:pm-ideal} and Lemma~\ref{lem:pm-ideal-containment} imply that $I^\MMM_n\subseteq I^\PM_n \subseteq \gr \, \II(\PM_n)$. Therefore, we have graded $\symm_n$-equivariant surjections
\begin{equation}\label{eq:pm-surj}   
   R(\MMM_n)\twoheadrightarrow \CC[\xxx_{n \times n}]/I_n^\PM \twoheadrightarrow R(\PM_n).
   \end{equation} 
    By Theorem~\ref{thm:matching-frobenius} and the Pieri Rule, any irreducible $V^\lambda$ appearing in $R(\MMM_n)_d$ will have $\lambda_1 \geq n-2d$. Thus, any $V^\lambda $ appearing in $(\CC[\xxx_{n \times n}]/I^\PM_n)_d$ and $R(\PM_n)_d$ will also satisfy $\lambda_1 \geq n-2d$. On the other hand, Lemma~\ref{lem:pm-symmetrizer-annihilation} says that each  $V^{\lambda}$ appearing in $(\CC[\xxx_{n \times n}]/I_n^\PM)_d$ satisfies $\lambda_1\le n-2d$, hence each Specht module $V^{\lambda}$ appearing in $R(\PM_n)_d$ should also satisfy $\lambda_1 \leq n-2d$. It follows that 
    \begin{quote} {\em every irreducible $V^\lambda$ appearing in $R(\PM_n)_d$ satisfies $\lambda_1 = n-2d$.}
    \end{quote}

    Lemma~\ref{lem:plethysm-interpretation} and Equation~\eqref{eq:ple-exp} yield the ungraded Frobenius image
    \begin{equation}
        \Frob(R(\PM_n)) = \Frob(\CC[\PM_n]) = s_{n/2}[s_2]= \sum_{\substack{\lambda \, \vdash \, n \\ \text{$\lambda$ even}}} s_\lambda.
    \end{equation}
   Since $R(\PM_n) = \bigoplus_{d \geq 0} R(\PM_n)_d$,
    the italicized statement in the last paragraph forces
    \begin{equation}
        \Frob(R(\PM_n)_d) =\sum_{\substack{\lambda \, \vdash \, n \\ \text{$\lambda$ even} \\ \lambda_1 \,= \,n-2d}}  s_\lambda,
    \end{equation}
    which completes the proof.
    \end{proof}

    The reasoning in Theorem~\ref{thm:pm-frobenius} did not directly show that $I^{\PM}_n = \gr \, \II(\PM_n)$. We deduce this equality of ideals using some symmetric function theory.

\begin{proposition}
    \label{prop:pm-ideal-equality}
    We have $I^{\PM}_n = \gr \, \II(\PM_n)$.
\end{proposition}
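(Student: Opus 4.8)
The plan is to leverage the surjections \eqref{eq:pm-surj} and a dimension count, exactly as in the proof of Proposition~\ref{prop:matching-ideal-equality}, but now the relevant dimension is not immediately visible from a monomial basis. Since Lemma~\ref{lem:pm-ideal-containment} gives $I^\PM_n \subseteq \gr\,\II(\PM_n)$, it suffices to prove the reverse inequality of dimensions $\dim \CC[\xxx_{n\times n}]/I^\PM_n \leq |\PM_n|$; combined with $\dim R(\PM_n) = |\PM_n|$ (orbit harmonics) this forces equality of the quotients, hence of the ideals. So the whole task reduces to bounding $\dim \CC[\xxx_{n\times n}]/I^\PM_n$ from above by $|\PM_n|$.

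First I would extract from the proof of Theorem~\ref{thm:pm-frobenius} the stronger statement it actually yields: the chain $R(\MMM_n) \twoheadrightarrow \CC[\xxx_{n\times n}]/I^\PM_n \twoheadrightarrow R(\PM_n)$ together with the two opposing bounds on $\lambda_1$ shows that every irreducible $V^\lambda$ occurring in $(\CC[\xxx_{n\times n}]/I^\PM_n)_d$ has $\lambda_1 = n-2d$. Now compare Frobenius images degree by degree. On one hand, $\grFrob(\CC[\xxx_{n\times n}]/I^\PM_n;q) = \sum_d \Frob\big((\CC[\xxx_{n\times n}]/I^\PM_n)_d\big)q^d$ is Schur-positive and, by the $\lambda_1 = n-2d$ constraint, is supported in degree $d$ on partitions with first row exactly $n-2d$. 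On the other hand, the surjection onto $R(\PM_n)$ gives, in each degree $d$, a containment (Schur-positivity of the difference) $\Frob(R(\PM_n)_d) \leq \Frob\big((\CC[\xxx_{n\times n}]/I^\PM_n)_d\big)$, while the surjection from $R(\MMM_n)$ gives the reverse-direction bound $\Frob\big((\CC[\xxx_{n\times n}]/I^\PM_n)_d\big) \leq \Frob(R(\MMM_n)_d)$.

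The key point is then that $\{\Frob(R(\MMM_n)_d)\}_{\lambda_1 = n-2d}$ — the truncation of the degree-$d$ Frobenius image of $R(\MMM_n)$ to partitions with first row exactly $n-2d$ — equals $\Frob(R(\PM_n)_d) = \sum_{\lambda \vdash n,\ \lambda \text{ even},\ \lambda_1 = n-2d} s_\lambda$. This follows from Theorem~\ref{thm:matching-frobenius}: $\Frob(R(\MMM_n)_d) = s_d[s_2]\cdot s_{n-2d}$, and by the Pieri rule a partition $\nu$ appearing in $s_d[s_2]\cdot s_{n-2d}$ has $\nu_1 = n-2d$ only when the entire horizontal strip $s_{n-2d}$ is added in the first row on top of a partition $\mu$ (with $\mu$ even, $\mu \vdash 2d$, $\mu_1 \leq n-2d$) appearing in $s_d[s_2] = \sum_{\mu \vdash 2d,\ \mu \text{ even}} s_\mu$ — forcing $\nu = (n-2d,\mu_1,\mu_2,\dots)$, which is even with $\nu_1 = n-2d$, and each such $\nu$ arises exactly once. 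Hence in each degree $d$ the outer bound and the inner bound coincide, squeezing $\Frob\big((\CC[\xxx_{n\times n}]/I^\PM_n)_d\big) = \Frob(R(\PM_n)_d)$; summing dimensions over $d$ gives $\dim \CC[\xxx_{n\times n}]/I^\PM_n = \dim R(\PM_n) = |\PM_n|$, and the proof concludes as above.

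I expect the main obstacle to be the Pieri-rule bookkeeping in the previous paragraph — precisely, verifying that truncating $s_d[s_2]\cdot s_{n-2d}$ to first-row-length $n-2d$ recovers exactly the even partitions with that first row, with correct (unit) multiplicities. This is the one place where a genuine combinatorial argument is needed rather than formal manipulation of surjections; everything else is the standard orbit-harmonics dimension-count template already used for $\MMM_n$. An alternative, possibly cleaner, route to the same bound would be to bypass Frobenius images and argue directly that the matching monomials $\mmm(w)$ for $w \in \MMM_n$ with a fixed point span a subspace of $\CC[\xxx_{n\times n}]/I^\PM_n$ that is already accounted for — but the representation-theoretic squeeze seems most faithful to the paper's existing development and to the sentence ``We deduce this equality of ideals using some symmetric function theory.''
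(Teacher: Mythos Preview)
Your approach is correct and is essentially the paper's own proof: both establish the chain $\grFrob(R(\PM_n);q) \leq \grFrob(\CC[\xxx_{n\times n}]/I^\PM_n;q) \leq \sum_d q^d\,\{s_d[s_2]\cdot s_{n-2d}\}_{\lambda_1 \leq n-2d}$ from the two surjections and Lemma~\ref{lem:pm-symmetrizer-annihilation}, and then use the same Pieri computation to show the outer terms coincide (the paper phrases it as an inequality \eqref{eq:inequality-pm} plus an ungraded sum, you phrase it as a degree-by-degree equality, but the combinatorics is identical). One small correction in your Pieri bookkeeping: when $\nu_1 = n-2d$ the horizontal strip is not ``added entirely in the first row''; rather, since the strip has $n-2d$ boxes in $n-2d$ distinct columns and $\nu$ has exactly $n-2d$ columns, every column of $\nu$ receives exactly one strip box, which is what forces $\nu = (n-2d,\mu_1,\mu_2,\dots)$ as you correctly conclude.
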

\begin{proof}
Recall the symmetric function operators $\{ - \}_{\lambda_1 \leq n-2d}$ and $\{ - \}_{\lambda_1 = n-2d}$ introduced in Section~\ref{sec:Background}.
For any $d \leq n/2$, we claim that 
\begin{equation}
\label{eq:inequality-pm}
        \left\{ s_d[s_2]\cdot s_{n-2d}\right\}_{\lambda_1\le n-2d} \, \, \le  \,  \sum_{\substack{\lambda \, \vdash \, n \\ \text{$\lambda$ even} \\ \lambda_1 \, = \, n-2d}}s_{\lambda}= \{s_{n/2}[s_2]\}_{\lambda_1  = n-2d}
\end{equation}
where $F \leq G$ means that $G-F$ is Schur-positive.
Indeed, by Pieri's Rule and Equation~\eqref{eq:ple-exp},
   \begin{equation}
    \label{eq:plethym-pieri}
        \left\{ s_d[s_2]\cdot s_{n-2d} \right\}_{\lambda_1\le n-2d} \, \, = \,  \sum_{(\mu,\nu)}s_{\lambda}
    \end{equation}
    where the sum is over all ordered pairs $(\mu,\nu)$ consisting of 
    \begin{itemize}
       \item an even partition $\mu\vdash 2d$, and
\item a horizontal strip $\nu$ of size $n-2d$ where $\lambda/\mu = \nu$, such that
        \item $\lambda_1 \leq n-2d$
   \end{itemize}  
   Since $|\nu| = n-2d$, the Pieri Rule forces $\lambda_1\ge n-2d$, so that each $s_\lambda$ appearing in \eqref{eq:plethym-pieri} satisfies $\lambda_1=n-2d=\lvert\nu\rvert$. Thus each column of $\lambda$ contains a box of $\nu$, which means that each part of $\lambda$ equals some part of $\mu$ or $n-2d$. Therefore $\lambda$ is even and $(\mu,\nu)$ is uniquely determined by $\lambda$. Hence the claimed inequality \eqref{eq:inequality-pm} holds.

    We deduce the proposition from \eqref{eq:inequality-pm}. If $V$ is a graded $\symm_n$-module, write $V_{\leq d}$ for the restriction of $V$ to degrees $\leq d$. We have 
    \begin{equation}\label{eq:pm-surj-refine}
        \sum_{d \, = \, 0}^{\frac{n}{2}-1}q^d\cdot \{s_d[s_2]\cdot s_{n-2d}\}_{\lambda_1\le n-2d}  \,\,
        \ge \, \, \grFrob(\CC[\xxx_{n \times n}]/I_n^\PM;q) \,\, \ge  \, \,\grFrob(R(\PM_n);q).
    \end{equation}
   where 
    \begin{itemize}
       \item the first inequality in \eqref{eq:pm-surj-refine} follows from the ideal containment $\gr \, \II(\MMM_n) = I^{\MMM}_n \subseteq I^{\PM}_n$, Theorem~\ref{thm:matching-frobenius}, and Lemma~\ref{lem:pm-symmetrizer-annihilation}, and
       \item the second inequality in \eqref{eq:pm-surj-refine} follows from Lemma~\ref{lem:pm-ideal-containment}.
    \end{itemize}
     On the other hand, by \eqref{eq:inequality-pm} and Theorem~\ref{thm:pm-frobenius} we have 
    \begin{equation}
   \sum_{d \, = \, 0}^{\frac{n}{2}-1} \{s_d[s_2]\cdot s_{n-2d}\}_{\lambda_1\le n-2d} \, \, \le \,\, \sum_{d\, =\, 0}^{\frac{n}{2}-1}\{s_{n/2}[s_2]\}_{\lambda_1=n-2d}=s_{n/2}[s_2]=\Frob(R(\PM_n)),
    \end{equation}
    which implies that \eqref{eq:pm-surj-refine} is a chain of equalities. Lemma~\ref{lem:pm-ideal-containment} implies $I^{\PM}_n \subseteq \gr \, \II(\PM_n)$, and the proposition follows. 
\end{proof}

Theorem~\ref{thm:matching-frobenius} gives a combinatorial formula for the Hilbert series of $R(\PM_n)$. If $w \in \symm_n$ is a permutation, a {\em decreasing subsequence of length $k$}  is a sequence $1 \leq i_1 < \cdots < i_k \leq n$ of indices such that $w(i_1) > \cdots > w(i_k)$. We write
\begin{equation}
    \lds(w) := \text{longest decreasing subsequence of $w$}
\end{equation}
for the longest possible length of a decreasing subsequence of $w$. It is a famous result of Baik, Deift, and Johansson \cite{BDJ} that the distribution of the statistic $\lds$ on $\symm_n$ converges to the Tracy-Wildom distribution, after appropriate rescaling. Restricting $\lds$ to the set $\PM_n$ of fixed-point-free involutions gives the Hilbert series of $R(\PM_n)$, up to reversal.

\begin{corollary}
    \label{cor:pm-hilbert}
    Assume that $n$ is even. The Hilbert series of $R(\PM_n)$ is given by
   \begin{equation}
        \Hilb(R(\PM_n);q) = \sum_{w \, \in \, \PM_n} q^{\frac{n - \lds(w)}{2}}.
    \end{equation}
\end{corollary}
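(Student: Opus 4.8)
The plan is to deduce the Hilbert series directly from the graded Frobenius image of Theorem~\ref{thm:pm-frobenius}, using the standard fact that taking dimensions corresponds to the Schur specialization $s_\lambda \mapsto f^\lambda$, where $f^\lambda := |\SYT(\lambda)| = \dim V^\lambda$. Since $\grFrob(R(\PM_n);q) = \sum_d \Frob(R(\PM_n)_d)\, q^d$ and $\dim V^\lambda = f^\lambda$, Theorem~\ref{thm:pm-frobenius} immediately yields
\[
    \Hilb(R(\PM_n);q) = \sum_{\substack{\lambda \, \vdash \, n \\ \lambda \text{ even}}} q^{\frac{n - \lambda_1}{2}} \cdot f^\lambda .
\]
It then remains to identify the right-hand side, combinatorially, with $\sum_{w \in \PM_n} q^{(n - \lds(w))/2}$.

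For this I would invoke three classical facts about the Schensted correspondence $w \mapsto (P(w),Q(w))$. First, restriction to involutions gives a bijection $w \mapsto P(w)$ from the involutions of $\symm_n$ onto $\bigsqcup_{\mu \vdash n}\SYT(\mu)$; this is the $(P,P)$ phenomenon recorded in Section~\ref{sec:Background}. Second, by a theorem of Schützenberger (see e.g. \cite{Sagan}), the number of fixed points of an involution $w$ equals the number of odd-length columns of the shape $\mu$ of $P(w)$; in particular $w$ is fixed-point-free precisely when every column of $\mu$ has even length, i.e. when the conjugate $\mu'$ is an even partition. Third, Schensted's theorem in its decreasing form \cite{Schensted} asserts that $\lds(w) = \mu'_1$, the number of rows of $\mu$.

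With these facts in hand I would reindex the displayed sum along the conjugation involution $\lambda \mapsto \mu := \lambda'$ on partitions of $n$. This carries the even partitions bijectively onto the partitions $\mu$ all of whose columns have even length, and along the way $f^\lambda = f^{\mu'} = f^\mu$ (transposition of standard Young tableaux) and $\lambda_1 = \mu'_1$. Hence
\[
    \Hilb(R(\PM_n);q) = \sum_{\substack{\mu \, \vdash \, n \\ \mu' \text{ even}}} q^{\frac{n - \mu'_1}{2}} \cdot f^\mu .
\]
Finally, $f^\mu = |\SYT(\mu)|$ counts, via the first two facts, exactly the fixed-point-free involutions of Schensted shape $\mu$, and for each such $w$ the third fact gives $\mu'_1 = \lds(w)$. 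Summing over all admissible $\mu$ therefore regroups the terms of the last display as $\sum_{w \in \PM_n} q^{(n - \lds(w))/2}$, which is the claim.

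I do not expect a genuine obstacle once Theorem~\ref{thm:pm-frobenius} is granted; the one point requiring care is the conjugation bookkeeping — reconciling the condition ``$\lambda$ even'' in Theorem~\ref{thm:pm-frobenius}, which constrains the first \emph{row}, with the statistic $\lds$, which reads off the first \emph{column} — together with keeping straight that $f^\lambda$ is invariant under conjugating the shape. Everything else is routine RSK combinatorics.
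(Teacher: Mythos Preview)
Your proposal is correct and follows essentially the same route as the paper: both start from Theorem~\ref{thm:pm-frobenius}, specialize $s_\lambda$ to $f^\lambda = |\SYT(\lambda)|$, and then invoke the three standard RSK facts (involutions $\leftrightarrow$ single tableaux, fixed points $\leftrightarrow$ odd columns, $\lds =$ first-column length) together with the conjugation $\lambda \leftrightarrow \lambda'$ to match the ``$\lambda$ even'' condition with the fixed-point-free condition. The paper merely reverses your direction of computation, counting $\{w \in \PM_n : \lds(w) = n-2d\}$ first and then comparing with $\dim R(\PM_n)_d$.
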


\begin{proof}
   We recall several standard facts about the Schensted correspondence. Suppose $w \in \symm_n$ satisfies $w \mapsto (P,Q)$.
    \begin{enumerate}
      \item We have $w^{-1} \mapsto (Q,P)$. In particular, we have $w \in \MMM_n$ if and only if $P = Q$.
       \item The statistic $\lds(w)$ equals the length of the first column of $P$ (or $Q$).
       \item If $w \in \MMM_n$, the number of fixed points of $w$ equals the number of columns in $P$ of odd length.
    \end{enumerate}
    From these facts, for any $d \leq n/2$ we have
   \begin{equation}
       | \{ w \in \PM_n \,:\, \lds(w) = n - 2d \} | = \sum_{\substack{\lambda \, \vdash \, n \\ \lambda'_1 \, = \, n-2d \\ \lambda' \text{ even}}} |\SYT(\lambda)|  =
        \sum_{\substack{\lambda \, \vdash \, n \\ \lambda_1 \, = \, n-2d \\ \lambda \text{ even}}} |\SYT(\lambda)|
    \end{equation}
   where $\lambda'$ is the conjugate partition obtained from interchanging the rows and columns of $\lambda$. Since $\dim V^\lambda = |\SYT(\lambda)|$, we are done by Theorem~\ref{thm:pm-frobenius}.
\end{proof}

The asymptotics of the distribution in Corollary~\ref{cor:pm-hilbert} may be derived from work of Baik and Rains \cite[Thm. 3.1]{BR}. After rescaling, the distribution of $\lds$ on $\MMM_n$ converges to the GOE distribution of random matrix theory as $n \to \infty$. Figure~\ref{fig:matching-hilbert-series} gives the histogram of $\Hilb(R(\PM_n);q)$ when $n=100$. The horizontal axis corresponds to degrees $d$, and the height of each bar represents the corresponding $\dim R(\PM_n)_d$.

\begin{figure}
    \centering
    \includegraphics[width=10cm]{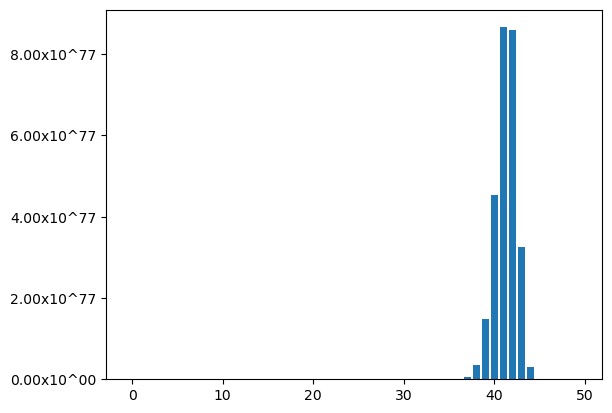}
    \caption{Coefficients of $\Hilb(R(\PM_n);q)$ when $n=100$} 
    \label{fig:PM-hilbert-series}
\end{figure}

\section{Conjugacy classes of involutions}
\label{sec:Conjugacy}

For $0 \leq a \leq n$ with $a \equiv n \mod 2$, we have the following subset of the locus $\MMM_n \subseteq \Mat_{n \times n}(\CC)$ of all involution permutation matrices: 
\begin{equation}
    \MMM_{n,a} := \{ w \in \MMM_n \,:\, \text{$w$ has $a$ fixed points} \}.
\end{equation}
The set of matrices $\MMM_{n,a}$ forms a single orbit under the conjugation action of $\symm_n$, and can be identified with permutations of cycle type $(2^{(n-a)/2},1^a)$. We have the disjoint union $\MMM_n = \bigsqcup_{a} \MMM_{n,a}$.
Our study of $R(\MMM_{n,a})$ will be more indirect and involved than our study of $R(\MMM_n)$ or $R(\PM_n)$; we will calculate the graded $\symm_n$-structure of $R(\MMM_{n,a})$ in Theorem~\ref{thm:conjugacy-module-character} after many lemmas. The added difficulties in studying $R(\MMM_{n,a})$ are the lack of an explicit basis for this quotient ring (as in Theorem~\ref{thm:matching-monomial-basis} for $R(\MMM_n)$) and the failure of $\eta$-operator annihilation to determine $R(\MMM_{n,a})$ (as it did in the proof of Theorem~\ref{thm:pm-frobenius} for $R(\PM_n)$).

\subsection{Some symmetric function results}
This subsection proves technical results on Schur expansions which will assist in our analysis of $R(\MMM_{n,a})$. This material should probably be skipped on a first reading and returned to as needed. 

These technical results require some notation. If $f(q) = \sum_{d} c_d \cdot q^d$ is a polynomial in $q$, we write $\langle q^d \rangle f := c_d$ for the coefficient of $q^d$ in $f$. Similarly, if $F = \sum_\lambda c_\lambda \cdot s_\lambda$ is a symmetric function expressed in the Schur basis, we write $\langle s_\lambda \rangle F := c_\lambda$ for the coefficient of $s_\lambda$. Finally, if $m \in \ZZ$ we write $m \mod 2 \in \{0,1\}$ for the remainder of $m$ modulo $2$. The next lemma describes the Schur expansion of the ungraded Frobenius image of $\CC[\MMM_{2d+a,d}]$.

\begin{lemma}
    \label{lem:s-coef}
    For each $\lambda\vdash a+2d$, write $\lambda=\lambda_1^{d_1}\lambda_2^{d_2}\cdots\lambda_m^{d_m}$ where $\lambda_1>\lambda_2>\cdots>\lambda_m>0$, and let $\delta_i := \lambda_i \mod 2 \in \{0,1\}$. Then
    \begin{equation}\label{eq:s-coef}
    \langle s_{\lambda} \rangle (s_d[s_2]\cdot s_a)=\left\langle q^{\frac{a-\sum_{i=1}^{m}\delta_i}{2}}\right\rangle \Bigg(\prod_{i=1}^{m}\sum_{j=0}^{\left\lfloor\frac{\lambda_i}{2}\right\rfloor-\left\lceil\frac{\lambda_{i+1}}{2}\right\rceil}q^j\Bigg)
    \end{equation}
    with the convention $\lambda_{m+1}=0$.
\end{lemma}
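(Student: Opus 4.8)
The plan is to compute the Schur expansion of $s_d[s_2] \cdot s_a$ by combining the known expansion $s_d[s_2] = \sum_{\mu \vdash 2d,\ \mu \text{ even}} s_\mu$ from \eqref{eq:ple-exp} with the Pieri Rule \eqref{eq:pieri-rule} for multiplication by $s_a$. Thus
\begin{equation*}
    s_d[s_2] \cdot s_a = \sum_{\substack{\mu \, \vdash \, 2d \\ \mu \text{ even}}} \sum_{\nu} s_\nu
\end{equation*}
where $\nu$ ranges over partitions obtained from $\mu$ by adding a horizontal strip of size $a$. So $\langle s_\lambda \rangle (s_d[s_2]\cdot s_a)$ counts the number of even partitions $\mu \vdash 2d$ such that $\lambda/\mu$ is a horizontal strip of size $a$. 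The core of the proof is to show this count equals the stated coefficient extraction from a product of $q$-analogues of intervals.

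First I would fix $\lambda = \lambda_1^{d_1} \cdots \lambda_m^{d_m}$ and analyze which even $\mu$ can appear. Writing $\mu = (\mu_1 \geq \mu_2 \geq \cdots)$, the horizontal-strip condition $\lambda/\mu$ being a horizontal strip is the interlacing condition $\lambda_1 \geq \mu_1 \geq \lambda_2 \geq \mu_2 \geq \lambda_3 \geq \cdots$ (as multisets of row lengths, reading $\lambda$ with multiplicity), and additionally $\mu$ must have all even parts and $|\mu| = 2d$, equivalently $|\lambda| - |\mu| = a$. The key observation is that for a part of $\lambda$ of value $\lambda_i$ occurring with multiplicity $d_i$, the interlacing forces all but possibly a few of the corresponding parts of $\mu$ to be determined: among the $d_i$ copies, the "interior" copies must lie in the window $[\lambda_{i+1}, \lambda_i]$, but evenness and the horizontal strip constraint (at most one box removed per column, i.e. parts of $\mu$ can only drop by a controlled amount) actually pin each block down so that the only freedom is how far the block of $\mu$-parts replacing the $\lambda_i$-block descends --- and that descent, being to an even value in the allowed range, is parametrized by an integer $j$ with $0 \leq j \leq \lfloor \lambda_i/2 \rfloor - \lceil \lambda_{i+1}/2 \rceil$. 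I would make this precise by showing $\mu$ is completely determined by a tuple $(j_1, \dots, j_m)$ in the product of these intervals, that each such tuple yields a valid even $\mu$ with $\lambda/\mu$ a horizontal strip, and that the size constraint $|\lambda| - |\mu| = a$ translates into $\sum_i (\text{contribution of } j_i) + \sum_i \delta_i = a$, i.e. $\sum_i j_i = \tfrac{a - \sum_i \delta_i}{2}$ after accounting for the parity corrections $\delta_i$ (each odd part $\lambda_i$ must drop to an even value, contributing the $\delta_i$ terms). Counting tuples with prescribed sum is exactly the coefficient of $q^{(a - \sum \delta_i)/2}$ in $\prod_i \sum_{j=0}^{\lfloor \lambda_i/2\rfloor - \lceil \lambda_{i+1}/2\rceil} q^j$, which is the right-hand side of \eqref{eq:s-coef}.

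The step I expect to be the main obstacle is establishing rigorously that $\mu$ factors as a "block descent" governed by one integer per distinct part of $\lambda$ --- i.e. that within the $d_i$ rows of $\lambda$ equal to $\lambda_i$, the corresponding rows of $\mu$ must all take a single common value (so there is no per-row freedom, only per-block freedom), and that this value ranges over exactly the even integers in $[\lceil \lambda_{i+1}/2 \rceil \cdot 2,\ \lfloor \lambda_i/2 \rfloor \cdot 2]$ subject to not conflicting with adjacent blocks. This requires carefully using both the horizontal-strip condition (which bounds $\mu$ from below, since $\lambda/\mu$ has no two boxes in a column means $\mu_k \geq \lambda_{k+1}$ in the expanded indexing) and the evenness of $\mu$, and checking the boundary cases where $\lambda_i$ and $\lambda_{i+1}$ have differing parities so the ceiling/floor interact nontrivially. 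Once the bijection between valid $\mu$'s and interval tuples is set up, the generating-function reformulation and the coefficient extraction are routine. I would also double check the convention $\lambda_{m+1} = 0$ handles the last block correctly, and that the edge case $a = 0$ (where $s_d[s_2] \cdot s_0 = s_d[s_2]$ and all $\delta_i = 0$, forcing $\lambda$ even and each interval contributing a unique choice) recovers the multiplicity-free statement \eqref{eq:ple-exp}.
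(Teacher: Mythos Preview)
Your approach is essentially the paper's: both reduce $\langle s_\lambda \rangle(s_d[s_2]\cdot s_a)$ via \eqref{eq:ple-exp} and Pieri to counting even partitions $\mu \vdash 2d$ with $\lambda/\mu$ a horizontal $a$-strip, and both then parametrize such $\mu$ by one integer per distinct part of $\lambda$. The paper makes this explicit by introducing $A = \{(x_1,\dots,x_m) \in \ZZ_{\ge 0}^m : x_i \le \lambda_i - \lambda_{i+1},\ x_i \equiv \lambda_i \bmod 2,\ \sum x_i = a\}$ and exhibiting the bijection $(x_i) \mapsto (\lfloor x_i/2 \rfloor)$ onto the set $B$ counted by the right side of \eqref{eq:s-coef}.

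There is, however, a genuine gap sitting exactly where you flagged the ``main obstacle,'' and it affects both your outline and the paper's argument. Your claim that within block $i$ the rows of $\mu$ ``must all take a single common value'' is not correct: the interlacing $\lambda_k \ge \mu_k \ge \lambda_{k+1}$ (expanded indexing) forces the first $d_i - 1$ rows of $\mu$ in block $i$ to equal $\lambda_i$ exactly, while only the last row ranges over even integers in $[\lambda_{i+1}, \lambda_i]$. Hence whenever $d_i \ge 2$ the partition $\mu$ has $\lambda_i$ as a part, and evenness of $\mu$ forces $\lambda_i$ itself to be even --- a constraint not recorded by the tuple $(j_1,\dots,j_m)$ (nor by the paper's set $A$). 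Concretely, take $\lambda = (3,3,1,1)$, $a = 2$, $d = 3$: none of the even partitions $(6),(4,2),(2,2,2)$ of $6$ fits inside $(3,3,1,1)$, so the left side of \eqref{eq:s-coef} is $0$; but here $m=2$, $\delta_1=\delta_2=1$, both factors in the product equal $\sum_{j=0}^{0} q^j = 1$, and the right side is $\langle q^0\rangle 1 = 1$. So the identity \eqref{eq:s-coef} as stated needs the extra hypothesis that every repeated part of $\lambda$ is even (equivalently, a factor $\prod_{i:d_i\ge 2}\chi[\lambda_i\text{ even}]$). With that correction your plan and the paper's bijection go through; without it, the step ``each such tuple yields a valid even $\mu$'' fails.
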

\begin{proof}
    By Pieri's Rule and Equation~\eqref{eq:ple-exp}, we have
    \begin{multline}
      \langle s_{\lambda} \rangle(s_d[s_2]\cdot s_a)
      =\left\lvert\left\{\text{horizontal strips $\mu \subseteq \lambda$}\,:\,
      \begin{matrix}\text{$\lambda/\mu$ is a partition,} \\  \text{$\lvert\mu\rvert=a$, and $\lambda/\mu$ is even}\end{matrix}
      \right\}\right\rvert
      = \\ \left\lvert\left\{(x_1,x_2,\cdots,x_m)\in\mathbb{Z}_{\ge 0}^m\,:\,\text{$x_i\le\lambda_i-\lambda_{i+1}$, $x_i\equiv\lambda_i\mod{2}$, and $\sum_{i=1}^{m}x_i=a$}\right\}\right\rvert.
    \end{multline}
    One the other hand, 
    \begin{multline}
    \left\langle q^{\frac{a-\sum_{i=1}^{m}\delta_i}{2}} \right \rangle \Bigg(\prod_{i=1}^{m}\sum_{j=0}^{\left\lfloor\frac{\lambda_i}{2}\right\rfloor-\left\lceil\frac{\lambda_{i+1}}{2}\right\rceil}q^j\Bigg)
    =  \\ \left\lvert\left\{(y_1,y_2,\cdots,y_m)\in\mathbb{Z}_{\ge 0}^m\,:\,\text{$y_i \le \left\lfloor\frac{\lambda_i}{2}\right\rfloor-\left\lceil\frac{\lambda_{i+1}}{2}\right\rceil$ and $\sum_{i=1}^{m}y_i=$}\frac{a-\sum_{i=1}^{m}\delta_i}{2}\right\}\right\rvert.
    \end{multline}
    We show that 
    \begin{multline}
    \label{eqn:coefficient-desired-interpretation}
    \left\lvert\left\{(x_1,x_2,\cdots,x_m)\in\mathbb{Z}_{\ge 0}^m\,:\,\text{$x_i\le\lambda_i-\lambda_{i+1}$, $x_i\equiv\lambda_i\mod{2}$, and $\sum_{i=1}^{m}x_i=a$}\right\}\right\rvert
    = \\ \left\lvert\left\{(y_1,y_2,\cdots,y_m)\in\mathbb{Z}_{\ge 0}^m\,:\,\text{$y_i \le \left\lfloor\frac{\lambda_i}{2}\right\rfloor-\left\lceil\frac{\lambda_{i+1}}{2}\right\rceil$ and $\sum_{i=1}^{m}y_i=\frac{a-\sum_{i=1}^{m}\delta_i}{2}$}\right\}\right\rvert.
    \end{multline}
    Indeed, if we let
    \begin{equation} A:=\left\{(x_1,x_2,\cdots,x_m)\in\mathbb{Z}_{\ge 0}^m\,:\,\text{$x_i\le\lambda_i-\lambda_{i+1}$, $x_i\equiv\lambda_i\mod{2}$, and $\sum_{i=1}^{m}x_i=a$}\right\}\end{equation}
    and
    \begin{equation}B:=\left\{(y_1,y_2,\cdots,y_m)\in\mathbb{Z}_{\ge 0}^m\,:\,\text{$y_i \le \left\lfloor\frac{\lambda_i}{2}\right\rfloor-\left\lceil\frac{\lambda_{i+1}}{2}\right\rceil$ and $\sum_{i=1}^{m}y_i=\frac{a-\sum_{i=1}^{m}\delta_i}{2}$}\right\},\end{equation}
    we have a map $f: A \to B$ given by
    \begin{equation}
        f:
        (x_i)_{i\in [m]}\longmapsto \Big(\left\lfloor\frac{x_i}{2}\right\rfloor\Big)_{i\in [m]}.
    \end{equation}
    It remains to show that $f$ is a bijection as follows.

    First, we show that $f: A \to B$ is well-defined, i.e. $f(A)\subseteq B$. In fact, if $(x_i)_{i\in [m]}\in A$, since $x_i\equiv\lambda_i\mod{2}$ we have 
    \begin{equation}
    \label{eqn:xi-to-deltai}
    \left\lfloor\frac{x_i}{2}\right\rfloor=\frac{x_i-\delta_i}{2}.\end{equation}
    Therefore 
    \begin{equation}\left\lfloor\frac{x_i}{2}\right\rfloor\le\frac{\lambda_i-\lambda_{i+1}-\delta_i}{2},\end{equation}
    which forces 
    \begin{equation}\left\lfloor\frac{x_i}{2}\right\rfloor\le\left\lfloor\frac{\lambda_i-\lambda_{i+1}-\delta_i}{2}\right\rfloor=\frac{\lambda_i-\lambda_{i+1}-\delta_i-\delta_{i+1}}{2}=\left\lfloor\frac{\lambda_i}{2}\right\rfloor-\left\lceil\frac{\lambda_{i+1}}{2}\right\rceil.\end{equation}
    Since 
    \begin{equation}\sum_{i\,=\,1}^m \left\lfloor\frac{x_i}{2}\right\rfloor=\sum_{i\,=\,1}^m \frac{x_i-\delta_i}{2}=\frac{a-\sum_{i=1}^m\delta_i}{2}\end{equation}
    we have $(\lfloor\frac{x_i}{2}\rfloor)_{i\in [m]}\in B$ so that $f: A \to B$ is well-defined.

    Next, we claim that $f: A \to B$ is injective. Indeed, for any tuple $(x_1, \dots, x_m) \in A$, Equation~\eqref{eqn:xi-to-deltai} implies that each entry $x_i$ is determined by $\left \lfloor \frac{x_i}{2} \right \rfloor$. The injectivity of $f$ follows.

    Finally, we show that $f$ is surjective. Let $(y_1, \dots, y_m)\in B$ and set $x_i:=2y_i+\delta_i$ for $1 \leq i \leq m$. We claim that $(x_1, \dots, x_m) \in A$. Given $1 \leq i \leq m$, since 
    \begin{equation}y_i \le \left\lfloor\frac{\lambda_i}{2}\right\rfloor-\left\lceil\frac{\lambda_{i+1}}{2}\right\rceil \end{equation}
    we have 
    \begin{equation}x_i\le 2\left(\left\lfloor\frac{\lambda_i}{2}\right\rfloor-\left\lceil\frac{\lambda_{i+1}}{2}\right\rceil\right)+\delta_i=2\left(\frac{\lambda_i-\delta_i}{2}-\frac{\lambda_{i+1}+\delta_{i+1}}{2}\right)+\delta_i=\lambda_i-\lambda_{i+1}-\delta_{i+1}\le \lambda_i-\lambda_{i+1}.\end{equation} 
    In addition, \begin{equation}x_i=2y_i+\delta_i\equiv\delta_i\equiv\lambda_i\mod{2}.\end{equation} 
    Furthermore, we have 
    \begin{equation}\sum_{i\,=\,1}^m x_i=2\sum_{i\,=\,1}^m y_i+\sum_{i\,=\,1}^m \delta_i=2\cdot\frac{a-\sum_{i=1}^m\delta_i}{2}+\sum_{i\,=\,1}^m\delta_i=a.\end{equation} 
    We conclude that $(x_1,\dots,x_m) \in A$, and we have $f: (x_1, \dots, x_m) \mapsto (y_1, \dots, y_m)$. Therefore, the map $f$ is a bijection, Equation~\eqref{eqn:coefficient-desired-interpretation} holds, and the proof is complete.
\end{proof}

Our second lemma is an identity of symmetric functions. Roughly speaking, it stratifies the Schur expansion of $s_{(n-a)/2}[s_2] \cdot s_a$ according to the length of the first row.

\begin{lemma}
    \label{lem:s-identity-one}
    If $a \equiv n \mod 2$ we have the identity of symmetric functions
    \begin{equation}\label{eq:s-identity-one}
        \sum_{d \, = \, 0}^{(n-a)/2} \{
            s_d[s_2] \cdot s_{n-2d} - s_{d-1}[s_2] \cdot s_{n-2d+2} 
        \}_{\lambda_1 \leq n - 2d + a} = s_{(n-a)/2}[s_2] \cdot s_a.
    \end{equation}
\end{lemma}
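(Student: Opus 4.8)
The plan is to first collapse the alternating sum by a telescoping argument, then verify the resulting identity one Schur coefficient at a time using Lemma~\ref{lem:s-coef} together with a palindromicity observation.

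\emph{Step 1: telescoping.} Write $b := (n-a)/2$ and $G_d := s_d[s_2]\cdot s_{n-2d}$ (with $G_{-1} := 0$). Since $s_{d-1}[s_2]\cdot s_{n-2d+2} = s_{d-1}[s_2]\cdot s_{n-2(d-1)} = G_{d-1}$, the left-hand side of \eqref{eq:s-identity-one} is $\sum_{d=0}^{b}\{G_d - G_{d-1}\}_{\lambda_1 \le n-2d+a}$. The truncation operators $\{-\}_P$ are linear, so reindexing the $G_{d-1}$-terms collapses this to
$$\sum_{d=0}^{b}\{G_d\}_{\lambda_1 \le n-2d+a}\;-\;\sum_{d=0}^{b-1}\{G_d\}_{\lambda_1 \le n-2d+a-2}\;=\;\{G_b\}_{\lambda_1 \le 2a}\;+\;\sum_{d=0}^{b-1}\big(\{G_d\}_{\lambda_1\le n-2d+a}-\{G_d\}_{\lambda_1\le n-2d+a-2}\big),$$
using $n-2b+a = 2a$. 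By Theorem~\ref{thm:matching-frobenius}, $G_b = s_b[s_2]\cdot s_a = s_{(n-a)/2}[s_2]\cdot s_a$ is exactly the right-hand side of \eqref{eq:s-identity-one}, so it suffices to prove the displayed expression equals $G_b$.

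\emph{Step 2: reduction to one coefficient identity.} Fix $\lambda \vdash n$ and set $\ell := \lambda_1$. The integer intervals $[n-2d+a-1,\, n-2d+a]$ for $0\le d\le b-1$ are pairwise disjoint and partition $[2a+1,\, n+a]$. Hence if $\ell \le 2a$, only the term $\{G_b\}_{\lambda_1\le 2a}$ contributes to $\langle s_\lambda\rangle$, giving $\langle s_\lambda\rangle G_b$ as required. If $\ell \ge 2a+1$, then since $\ell \le n \le n+a$ there is a unique $d^* \in \{0,\dots,b-1\}$ with $\ell \in [n-2d^*+a-1,\, n-2d^*+a]$, namely $d^* = a+b-\lceil \ell/2\rceil = \lfloor (n+a-\ell)/2\rfloor$ (one checks $0\le d^*\le b-1$ from $2a+1\le \ell\le n$), and the identity reduces to $\langle s_\lambda\rangle G_{d^*} = \langle s_\lambda\rangle G_b$.

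\emph{Step 3: the coefficient identity via Lemma~\ref{lem:s-coef}.} Apply Lemma~\ref{lem:s-coef} to $G_{d^*}=s_{d^*}[s_2]\cdot s_{n-2d^*}$ and to $G_b=s_b[s_2]\cdot s_a$. Writing $\lambda=\lambda_1^{d_1}\cdots\lambda_m^{d_m}$ with $\lambda_1>\cdots>\lambda_m>0$, $\delta_i := \lambda_i\bmod 2$, $\sigma := \sum_i\delta_i$, and $P_\lambda(q) := \prod_{i=1}^{m}\sum_{j=0}^{\lfloor\lambda_i/2\rfloor-\lceil\lambda_{i+1}/2\rceil}q^{j}$, the lemma gives $\langle s_\lambda\rangle G_{d^*} = \langle q^{x_1}\rangle P_\lambda$ and $\langle s_\lambda\rangle G_b = \langle q^{x_2}\rangle P_\lambda$, where $x_1 := \tfrac12\big((n-2d^*)-\sigma\big)$ and $x_2 := \tfrac12(a-\sigma)$. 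Now $P_\lambda$ is a product of palindromic polynomials $1+q+\cdots+q^{e}$, hence itself palindromic, and a telescoping computation gives $\deg P_\lambda = \lceil\ell/2\rceil-\sigma$. The defining property of $d^*$ gives $n-2d^*+a = 2\lceil\ell/2\rceil$, so $x_1+x_2 = \lceil\ell/2\rceil-\sigma = \deg P_\lambda$. Therefore palindromicity of $P_\lambda$ forces $\langle q^{x_1}\rangle P_\lambda = \langle q^{\deg P_\lambda - x_1}\rangle P_\lambda = \langle q^{x_2}\rangle P_\lambda$; and if $x_1,x_2$ are not integers in $[0,\deg P_\lambda]$ both coefficients vanish. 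This completes the proof.

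The routine but fiddly part is the interval bookkeeping of Step 2 (pinning down $d^*$ and checking its range); the conceptual content is Step 3, where Lemma~\ref{lem:s-coef} converts both sides into coefficients of a single palindromic polynomial and the precise value of $d^*$ is exactly what makes the two exponents $x_1,x_2$ reflections of each other across $\tfrac12\deg P_\lambda$.
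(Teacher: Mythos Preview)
Your proof is correct and follows essentially the same route as the paper's: telescope the sum (your Step~1 is the paper's Equation~\eqref{eq:s-identity-one-leftside}), then for each $\lambda$ with $\lambda_1>2a$ reduce to a single coefficient identity and verify it via Lemma~\ref{lem:s-coef} together with palindromicity of the polynomial $P_\lambda$ (your Step~3 is the paper's proof of Equations~\eqref{eq:a+b} and \eqref{eq:a+b-1}). Your treatment unifies the even/odd cases of $\lambda_1$ through $\lceil\ell/2\rceil$, while the paper separates them into two claims, but the underlying argument is identical. One small remark: the citation of Theorem~\ref{thm:matching-frobenius} in Step~1 is superfluous---the equality $G_b = s_{(n-a)/2}[s_2]\cdot s_a$ is immediate from $n-2b=a$.
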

\begin{proof}
    For any  $a\equiv b\equiv n\mod{2}$, we claim that
    \begin{equation}\label{eq:a+b}
        \left\{s_{(n-a)/2}[s_2]\cdot s_a\right\}_{\lambda_1=a+b}=\left\{s_{(n-b)/2}[s_2]\cdot s_b\right\}_{\lambda_1=a+b},
    \end{equation}
    \begin{equation}\label{eq:a+b-1}
        \left\{s_{(n-a)/2}[s_2]\cdot s_a\right\}_{\lambda_1=a+b-1}=\left\{s_{(n-b)/2}[s_2]\cdot s_b\right\}_{\lambda_1=a+b-1}.
    \end{equation}
    For Equation~\eqref{eq:a+b}, fix $\lambda\vdash n$ such that $\lambda_1=a+b$ and write $\lambda=\lambda_1^{d_1}\lambda_2^{d_2}\cdots\lambda_m^{d_m}$ where $a+b=\lambda_1>\lambda_2>\cdots>\lambda_m>0$. Let $\delta_i := \lambda_i \mod 2 \in \{0,1\}$ for $1 \leq i \leq m$. Consider the polynomial $f \in \ZZ[q]$ given by
    \begin{equation}f:=\prod_{i=1}^{m}\sum_{j=0}^{\left\lfloor\frac{\lambda_i}{2}\right\rfloor-\left\lceil\frac{\lambda_{i+1}}{2}\right\rceil}q^j.\end{equation}
    Observe that the coefficient sequence of $f$ is palindromic.
    Since $\lambda_1=a+b\equiv 0\mod{2}$, we have $\delta_1=0$. Therefore 
    \begin{multline}\frac{a-\sum_{i=1}^m\delta_i}{2}+\frac{b-\sum_{i=1}^m\delta_i}{2}
    =\frac{a+b-\sum_{i=1}^m(\delta_i+\delta_{i+1})}{2}
    = \frac{\lambda_1-\sum_{i=1}^m(\delta_i+\delta_{i+1})}{2}
    = \\ \sum_{i\,=\,1}^m\left(\frac{\lambda_i-\delta_i}{2}-\frac{\lambda_{i+1}+\delta_{i+1}}{2}\right)
    =\sum_{i\,=\,1}^m\left(\left\lfloor\frac{\lambda_i}{2}\right\rfloor-\left\lceil\frac{\lambda_{i+1}}{2}\right\rceil\right)=\deg(f).
    \end{multline}
    The palindromicity of $f$ yields
    \begin{equation}
    \label{eq:a+b-help}
    \left\langle q^{\frac{a-\sum_{i=1}^m\delta_i}{2}} \right \rangle f=
    \left\langle q^{\frac{b-\sum_{i=1}^m\delta_i}{2}}\right\rangle f.
    \end{equation}
    Equation~\eqref{eq:a+b} follows from Equation~\eqref{eq:a+b-help} and Lemma~\ref{lem:s-coef}.
    The proof of Equation~\eqref{eq:a+b-1} is similar to that of Equation~\eqref{eq:a+b} and omitted.
    
    We use Equations~\eqref{eq:a+b} and \eqref{eq:a+b-1} to prove the desired Equation~\eqref{eq:s-identity-one}. The left hand side of Equation~\eqref{eq:s-identity-one} may be rewritten
    \begin{multline}
    \label{eq:s-identity-one-leftside}
        \sum_{d \, = \, 0}^{\frac{n-a}{2}} \{
            s_d[s_2] \cdot s_{n-2d} - s_{d-1}[s_2] \cdot s_{n-2d+2} 
        \}_{\lambda_1 \leq n - 2d + a} = \\
        \sum_{d \, = \, 0}^{\frac{n-a}{2}} \{
            s_d[s_2] \cdot s_{n-2d} 
        \}_{\lambda_1 \leq n - 2d + a}-\sum_{d \, = \, 0}^{\frac{n-a}{2}} \{
            s_{d-1}[s_2] \cdot s_{n-2d+2} 
        \}_{\lambda_1 \leq n - 2d + a} = \\
       \sum_{d \, = \, 0}^{\frac{n-a}{2}} \{
            s_d[s_2] \cdot s_{n-2d} 
        \}_{\lambda_1 \leq n - 2d + a}-\sum_{d \, = \, 0}^{\frac{n-a}{2}-1} \{
            s_{d}[s_2] \cdot s_{n-2d} 
        \}_{\lambda_1 \leq n - 2d + a-2} = \\
        \{s_{(n-a)/2}[s_2]\cdot s_a \}_{\lambda_1 \le 2a} + \sum_{d \, = \, 0}^{\frac{n-a}{2}-1} \{
            s_{d}[s_2] \cdot s_{n-2d} 
        \}_{n - 2d + a-1 \le \lambda_1 \le n-2d+a}.
    \end{multline}
    Using Equations~\eqref{eq:a+b} and \eqref{eq:a+b-1} with $b=n-2d$, we have 
    \begin{multline}\label{eq:s-identity-one-new}
        \{s_{(n-a)/2}[s_2]\cdot s_a \}_{\lambda_1 \le 2a} + \sum_{d \, = \, 0}^{\frac{n-a}{2}-1} \{
            s_{d}[s_2] \cdot s_{n-2d} 
        \}_{n - 2d + a-1 \le \lambda_1 \le n-2d+a} = \\
        \{s_{(n-a)/2}[s_2]\cdot s_a \}_{\lambda_1 \le 2a} + \sum_{d \, = \, 0}^{\frac{n-a}{2}-1} \{
            s_{(n-a)/2}[s_2] \cdot s_{a} 
        \}_{n - 2d + a-1 \le \lambda_1 \le n-2d+a}
        =s_{(n-a)/2}[s_2]\cdot s_a.
    \end{multline}
    The desired Equation~\eqref{eq:s-identity-one} follows from Equations~\eqref{eq:s-identity-one-leftside} and \eqref{eq:s-identity-one-new}.
\end{proof}

Our final symmetric function lemma is an equality \eqref{eq:s-identity-two} of two sums involving Schur function plethysms. The right hand side is the Frobenius image $\sum_d s_d[s_2] \cdot s_{n-2d}$ of the $\symm_n$-module $\CC[\MMM_n]$. The terms of the left hand side  involve the Frobenius images $s_d[s_2] \cdot s_{n-2d}$ of the submodules $\CC[\MMM_{n,n-2d}]$, but are truncated by the length of the first row.

\begin{lemma}
    \label{lem:s-identity-two}
    We have the identity of symmetric functions
    \begin{equation}\label{eq:s-identity-two}
        \sum_{d \, = \, 0}^{\lfloor n/2 \rfloor} \big( \{
            s_d[s_2] \cdot s_{n-2d} \}_{\lambda_1 \leq 2(n-2d)} + \{s_{d}[s_2] \cdot s_{n-2d} 
        \}_{\lambda_1 \leq 2(n-2d)-2} \big) = \sum_{d \, = \, 0}^{\lfloor n/2 \rfloor} s_d[s_2] \cdot s_{n-2d}.
    \end{equation}
\end{lemma}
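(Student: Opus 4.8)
The plan is to verify \eqref{eq:s-identity-two} coefficientwise in the Schur basis. Fix $\lambda \vdash n$, written $\lambda = \lambda_1^{d_1}\cdots\lambda_m^{d_m}$ with $\lambda_1 > \cdots > \lambda_m > 0$, and set $\delta_i := \lambda_i \bmod 2$, $\sigma := \sum_{i=1}^m \delta_i$ (the number of distinct odd parts of $\lambda$), and $f_\lambda := \prod_{i=1}^m \sum_{j=0}^{\lfloor\lambda_i/2\rfloor - \lceil\lambda_{i+1}/2\rceil} q^j$, the polynomial appearing in Lemma~\ref{lem:s-coef} (with $\lambda_{m+1} := 0$). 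Two structural features of $f_\lambda$ drive the whole argument: it is palindromic, being a product of the palindromic polynomials $1 + q + \cdots + q^j$; and a one-line telescoping computation gives $\deg f_\lambda = \lfloor\lambda_1/2\rfloor - \sigma + \delta_1 =: D$. By Lemma~\ref{lem:s-coef}, for $0 \le d \le \lfloor n/2\rfloor$ we have $\langle s_\lambda\rangle(s_d[s_2]\cdot s_{n-2d}) = \langle q^{(n-2d-\sigma)/2}\rangle f_\lambda$.

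First I would dispose of the parity case: if $n - \sigma$ is odd then $(n-2d-\sigma)/2 \notin \ZZ$ for every $d$, so $\langle s_\lambda\rangle(s_d[s_2]\cdot s_{n-2d}) = 0$ for all $d$ and both sides of \eqref{eq:s-identity-two} have vanishing $s_\lambda$-coefficient. So assume $n - \sigma$ is even; put $k_0 := (n-\sigma)/2$ and $k_d := k_0 - d$, so that $\langle s_\lambda\rangle(s_d[s_2]\cdot s_{n-2d}) = \langle q^{k_d}\rangle f_\lambda$, which is nonzero only when $0 \le k_d \le D$. Next come two elementary inequalities: $k_0 \ge D$ (equivalently $(n - \lambda_1) + (\sigma - \delta_1) \ge 0$, which is clear) and $k_0 \le \lfloor n/2\rfloor$ (immediate for $n$ even, and for $n$ odd because an odd $|\lambda|$ forces $\sigma \ge 1$). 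Hence, as $d$ runs over $\{0, \dots, \lfloor n/2\rfloor\}$, the index $k_d$ runs over a block of consecutive integers containing $\{0, \dots, D\}$, so in the $s_\lambda$-coefficient the right-hand side of \eqref{eq:s-identity-two} contributes $\sum_d \langle q^{k_d}\rangle f_\lambda = \sum_{k=0}^D \langle q^k\rangle f_\lambda = f_\lambda(1)$.

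For the left-hand side I would rephrase the two truncation conditions in terms of $k_d$. Using $n - 2d = 2k_d + \sigma$ together with $\lambda_1 = 2D + 2\sigma - \delta_1$, a brief check over $\delta_1 \in \{0, 1\}$ shows that $\delta_1$ drops out and one obtains $\lambda_1 \le 2(n-2d) \iff k_d \ge \lceil D/2\rceil$ and $\lambda_1 \le 2(n-2d) - 2 \iff k_d \ge \lceil (D+1)/2\rceil$. Substituting these into \eqref{eq:s-identity-two} and reindexing by $k$ as before, the $s_\lambda$-coefficient of the left-hand side equals $\sum_{k=0}^D \big(\chi(k \ge \lceil D/2\rceil) + \chi(k \ge \lceil (D+1)/2\rceil)\big)\langle q^k\rangle f_\lambda$. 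Subtracting $f_\lambda(1) = \sum_{k=0}^D \langle q^k\rangle f_\lambda$ reduces the claim to $\sum_{k=0}^D w(k)\,\langle q^k\rangle f_\lambda = 0$, where $w(k) := \chi(k \ge \lceil D/2\rceil) + \chi(k \ge \lceil (D+1)/2\rceil) - 1$. Since $\lceil D/2\rceil + \lceil (D+1)/2\rceil = D + 1$, one checks $w(D - k) = -w(k)$ for every $k$ (the weight is $-1$ near $0$, $+1$ near $D$, and $0$ at the midpoint when $D$ is even), so palindromicity of $f_\lambda$ forces the sum to vanish.

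I expect the only real work to be bookkeeping rather than depth: justifying that the substitution $d \leftrightarrow k_d$ genuinely turns the $d$-sums into sums over $k \in \{0, \dots, D\}$ (this is exactly the role of $k_0 \ge D$, $k_0 \le \lfloor n/2\rfloor$, and the $n$-odd parity remark), and carrying out the $\delta_1 \in \{0,1\}$ case analysis cleanly so the two thresholds emerge as $\lceil D/2\rceil$ and $\lceil (D+1)/2\rceil$; after that the palindromic cancellation is immediate. A more representation-theoretic alternative would be to iterate Lemma~\ref{lem:s-identity-one}, which already re-packages truncated plethysms of this type as telescoping sums, and match terms, but the direct coefficient computation above seems cleaner and self-contained.
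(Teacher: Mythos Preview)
Your argument is correct, and it takes a genuinely different route from the paper. The paper's proof does exactly what you mention as an alternative in your last sentence: it applies the intermediate identity \eqref{eq:s-identity-one-new} (established inside the proof of Lemma~\ref{lem:s-identity-one}) to each term $s_d[s_2]\cdot s_{n-2d}$ on the right-hand side, then interchanges the order of the resulting double sum and telescopes. Your approach instead bypasses Lemma~\ref{lem:s-identity-one} entirely and works coefficientwise from Lemma~\ref{lem:s-coef} alone, reducing the identity to the anti-palindromicity of the weight $w(k)$ against the palindromic polynomial $f_\lambda$. This is arguably cleaner and more self-contained: it isolates the single structural reason (palindromicity plus the threshold identity $\lceil D/2\rceil + \lceil(D+1)/2\rceil = D+1$) that makes the lemma true, whereas the paper's proof leans on machinery already built for Lemma~\ref{lem:s-identity-one}. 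The paper's route has the modest advantage of reusing existing work and avoiding the $\delta_1$ case check, but both are short once the right ingredient is identified.
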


The proof of Lemma~\ref{lem:s-identity-two} is an application of Equation~\eqref{eq:s-identity-one-new} to \eqref{eq:s-identity-two}. Thanks to these tools, this proof is not as difficult as that of Lemmas~\ref{lem:s-coef} or \ref{lem:s-identity-one}.

\begin{proof}
    Applying Equation~\eqref{eq:s-identity-one-new} to the right hand side of Equation~\eqref{eq:s-identity-two}, we get
    \begin{align}
        \notag \sum_{d \, = \, 0}^{\lfloor n/2 \rfloor} &s_d[s_2] \cdot s_{n-2d}= \\
       & \sum_{d \, = \, 0}^{\lfloor n/2 \rfloor} \Bigg( \{s_d[s_2]\cdot s_{n-2d}\}_{\lambda_1\le 2(n-2d)} + \sum_{k=0}^{d-1} \{s_k[s_2]\cdot s_{n-2k}\}_{2n-2k-2d-1\le\lambda_1\le 2n-2k-2d} \Bigg)=\\
       & \sum_{d \, = \, 0}^{\lfloor n/2 \rfloor} \{s_d[s_2]\cdot s_{n-2d}\}_{\lambda_1\le 2(n-2d)} + \sum_{d \, = \, 0}^{\lfloor n/2 \rfloor} \sum_{k=0}^{d-1} \{s_k[s_2]\cdot s_{n-2k}\}_{2n-2k-2d-1\le\lambda_1\le 2n-2k-2d}= \\
       & \sum_{d \, = \, 0}^{\lfloor n/2 \rfloor} \{s_d[s_2]\cdot s_{n-2d}\}_{\lambda_1\le 2(n-2d)} + \sum_{k \, = \, 0}^{\lfloor n/2 \rfloor-1} \sum_{d=k+1}^{\lfloor n/2 \rfloor} \{s_k[s_2]\cdot s_{n-2k}\}_{2n-2k-2d-1\le\lambda_1\le 2n-2k-2d}=\\
       & \sum_{d \, = \, 0}^{\lfloor n/2 \rfloor} \{s_d[s_2]\cdot s_{n-2d}\}_{\lambda_1\le 2(n-2d)} + \sum_{k \, = \, 0}^{\lfloor n/2 \rfloor-1} \{s_k[s_2]\cdot s_{n-2k}\}_{2n-2k-2\lfloor n/2 \rfloor-1\le\lambda_1\le 2n-4k-2}=\\
       & \sum_{d \, = \, 0}^{\lfloor n/2 \rfloor} \{s_d[s_2]\cdot s_{n-2d}\}_{\lambda_1\le 2(n-2d)} + \sum_{d \, = \, 0}^{\lfloor n/2 \rfloor-1} \{s_d[s_2]\cdot s_{n-2d}\}_{2n-2d-2\lfloor n/2 \rfloor-1\le\lambda_1\le 2(n-2d)-2}= \label{eqn:second-to-last}\\
       & \sum_{d \, = \, 0}^{\lfloor n/2 \rfloor} \{s_d[s_2]\cdot s_{n-2d}\}_{\lambda_1\le 2(n-2d)} + \sum_{d \, = \, 0}^{\lfloor n/2 \rfloor-1} \{s_d[s_2]\cdot s_{n-2d}\}_{\lambda_1\le 2(n-2d)-2} =\\
       & \sum_{d \, = \, 0}^{\lfloor n/2 \rfloor} \{s_d[s_2]\cdot s_{n-2d}\}_{\lambda_1\le 2(n-2d)} + \sum_{d \, = \, 0}^{\lfloor n/2 \rfloor} \{s_d[s_2]\cdot s_{n-2d}\}_{\lambda_1\le 2(n-2d)-2}
    \end{align}
    where the second-to-last equality follows from the fact that each $s_{\lambda}$ appearing in the Schur expansion of $s_d[s_2]\cdot s_{n-2d}$  satisfies  
    \begin{equation}\lambda_1 \ge n-2d = 2n-2d-(n-1)-1 \ge 2n-2d-2\lfloor n/2 \rfloor-1.\end{equation}
\end{proof}

\subsection{The ideal $I_{n,a}^\MMM$}
We want to understand the defining ideal $\gr \, \II(\MMM_{n,a})$ of $R(\MMM_{n,a})$. As in the case of $\gr \, \II(\MMM_n)$ (Lemma~\ref{lem:matching-ideal-containment}), some elements of $\gr \, \II(\MMM_{n,a})$ are not difficult to obtain.

\begin{defn}
    \label{def:conjugacy-ideal}
    Suppose $0 \leq a \leq n$ and $2 \mid (n-a)$. Let $I_{n,a}^\MMM \subseteq \CC[\xxx_{n \times n}]$ be the ideal 
    \begin{align}
    I_{n,a}^\MMM &= I_n^\MMM + (x_{1,1} + \cdots + x_{n,n}) + \left(  \prod_{i \, \in \, S} x_{i,i} \,:\, S \subseteq [n], \, |S| > a \right) \label{eqn:i-def-one}\\
    &= \gr \, \II(\MMM_n) + (x_{1,1} + \cdots + x_{n,n}) + \left(  \prod_{i \, \in \, S} x_{i,i} \,:\, S \subseteq [n], \, |S| > a \right). \label{eqn:i-def-two}
\end{align}
The equality \eqref{eqn:i-def-two} is justified by
Proposition~\ref{prop:matching-ideal-equality}.
\end{defn}

  In other words, the ideal $I_{n,a}^\MMM$ is generated by $I_n^\MMM$ together with the diagonal sum $x_{1,1} + \cdots + x_{n,n}$ and any product of $a+1$ variables on the diagonal.
  The analog of Lemma~\ref{lem:matching-ideal-containment} is as follows.

\begin{lemma}
    \label{lem:conjugacy-ideal-containment}
    We have $I_{n,a}^\MMM \subseteq \gr \, \II(\MMM_{n,a})$.
\end{lemma}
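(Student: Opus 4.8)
The plan is to show that each generator listed in Definition~\ref{def:conjugacy-ideal} arises as the top-degree component $\tau(f)$ of some polynomial $f \in \II(\MMM_{n,a})$, which suffices since $\gr\,\II(\MMM_{n,a})$ is generated by all such top-degree components and is in particular closed under the ideal operations. The key observation is that $\MMM_{n,a} \subseteq \MMM_n$, hence $\II(\MMM_n) \subseteq \II(\MMM_{n,a})$, and therefore $\gr\,\II(\MMM_n) \subseteq \gr\,\II(\MMM_{n,a})$; by Proposition~\ref{prop:matching-ideal-equality} this gives $I_n^\MMM \subseteq \gr\,\II(\MMM_{n,a})$ for free, taking care of the first summand in \eqref{eqn:i-def-one}.

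It remains to handle the two new families of generators. For the diagonal sum $x_{1,1} + \cdots + x_{n,n}$: every $w \in \MMM_{n,a}$ has exactly $a$ fixed points, so the trace of the corresponding permutation matrix is $a$; thus $x_{1,1} + \cdots + x_{n,n} - a \in \II(\MMM_{n,a})$, and its top-degree part is precisely $x_{1,1} + \cdots + x_{n,n}$ (a nonzero degree-$1$ polynomial). For the products $\prod_{i \in S} x_{i,i}$ with $|S| = a+1$ (it suffices to treat these, as larger products lie in the ideal they generate): if $w \in \MMM_{n,a}$ and $S$ is any set of $a+1$ indices, then since $w$ has only $a$ fixed points, at least one $i \in S$ is not fixed by $w$, so $x_{i,i}$ evaluates to $0$ on $w$ and hence $\prod_{i \in S} x_{i,i}$ vanishes on all of $\MMM_{n,a}$. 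Thus $\prod_{i \in S} x_{i,i} \in \II(\MMM_{n,a})$ itself, and being homogeneous it equals its own top-degree part, so it lies in $\gr\,\II(\MMM_{n,a})$.

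Combining the three containments — $I_n^\MMM \subseteq \gr\,\II(\MMM_{n,a})$, the diagonal sum in $\gr\,\II(\MMM_{n,a})$, and all the diagonal products in $\gr\,\II(\MMM_{n,a})$ — and using that $\gr\,\II(\MMM_{n,a})$ is an ideal, we conclude $I_{n,a}^\MMM \subseteq \gr\,\II(\MMM_{n,a})$. There is no real obstacle here; this lemma is the easy half of the eventual equality $I_{n,a}^\MMM = \gr\,\II(\MMM_{n,a})$, exactly parallel to Lemma~\ref{lem:matching-ideal-containment} and Lemma~\ref{lem:pm-ideal-containment}. The genuinely hard direction — bounding $\dim \CC[\xxx_{n\times n}]/I_{n,a}^\MMM$ from above by $|\MMM_{n,a}|$, or equivalently pinning down the graded module structure — is deferred to the later lemmas of this section and relies on the symmetric function identities just proved.
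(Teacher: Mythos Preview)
Your proof is correct and follows essentially the same argument as the paper: use $\MMM_{n,a} \subseteq \MMM_n$ to inherit $I_n^\MMM \subseteq \gr\,\II(\MMM_{n,a})$, then observe that $x_{1,1}+\cdots+x_{n,n}-a$ and $\prod_{i\in S} x_{i,i}$ for $|S|>a$ lie in $\II(\MMM_{n,a})$ and take top-degree parts. One correction to your closing commentary: the paper explicitly states that $I_{n,a}^\MMM = \gr\,\II(\MMM_{n,a})$ does \emph{not} hold in general (and leaves finding a generating set of $\gr\,\II(\MMM_{n,a})$ open), so this containment is not the ``easy half'' of an eventual equality.
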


Unlike in the case of $\MMM_n$ or $\PM_n$, we do {\bf not} have $I_{n,a}^\MMM = \gr \, \II(\MMM_{n,a})$ in general. We leave finding an explicit generating set of $\gr \, \II(\MMM_{n,a})$ as an open problem.

\begin{proof}
    Since $\MMM_{n,a} \subseteq \MMM_n$, we need only show that the diagonal sum $x_{1,1} + \cdots + x_{n,n}$ and the product $\prod_{i \, \in \, S} x_{i,i}$ lie in $\gr \, \II(\MMM_{n,a})$ for $|S| > a$. Indeed, we have $x_{1,1} + \cdots + x_{n,n} - a \in \II(\MMM_{n,a})$ and therefore $x_{1,1} + \cdots + x_{n,n} \in \gr \, \II(\MMM_{n,a})$.
    Furthermore, if $|S| > a$ we have $\prod_{i \, \in \, S} x_{i,i} \in \II(\MMM_{n,a})$ and therefore $\prod_{i \, \in \, S} x_{i,i} \in \gr \, \II(\MMM_{n,a})$.
\end{proof}





We want to show that certain degrees of the graded $\symm_n$-module $R(\MMM_{n,a})$ are annihilated by the symmetrization operator 
$\eta_j = \sum_{w \in \symm_j} w$. This will be done in Lemma~\ref{lem:conjugacy-module-annihilation} below. In order to prove Lemma~\ref{lem:conjugacy-module-annihilation} we require two technical results; the first is as follows.

\begin{lemma}\label{lem:f-not-inj}
    Fix $A=\{i_1,j_1,\cdots,i_d,j_d\}$ consisting of $d$ pairs of integers in $[n]$ and suppose $B\subseteq [n]$ satisfies  $\lvert A\rvert >\lvert B \rvert +a$. For each function $f:A\rightarrow B$, at least one of the two following statements is true.
    \begin{itemize}
        \item There exist integers $1\le k_1 < k_2 < \cdots < k_{a+1} \le d$ such that $f(i_{k_\ell})=f(j_{k_\ell})$ for all $\ell\in[a+1]$.
        \item There exist integers $1 \le t_1 < t_2 \le d$ such that $f(\{i_{t_1},j_{t_1}\}) \cap f(\{i_{t_2},j_{t_2}\}) \neq \varnothing$.
    \end{itemize}
\end{lemma}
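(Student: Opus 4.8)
The statement is a pure counting/pigeonhole assertion about functions $f : A \to B$ where $A$ consists of $d$ pairs and $|A| > |B| + a$. I would argue by contrapositive: assume \emph{both} conclusions fail and derive $|A| \leq |B| + a$. So suppose there are at most $a$ indices $k$ with $f(i_k) = f(j_k)$, and suppose the images $f(\{i_t, j_t\})$ are pairwise disjoint across $t \in [d]$.

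Here is how I would extract the contradiction. Partition $[d]$ into the set $P$ of ``diagonal'' indices $k$ with $f(i_k) = f(j_k)$ and the set $P'$ of ``off-diagonal'' indices with $f(i_k) \neq f(j_k)$; by the failure of the first bullet, $|P| \leq a$. For $k \in P$ the image $f(\{i_k, j_k\})$ is a singleton, contributing $1$ element of $B$; for $k \in P'$ it is a $2$-element set, contributing $2$ elements. By the failure of the second bullet these image sets are pairwise disjoint, so they are disjoint subsets of $B$, whence
\begin{equation}
    |B| \;\geq\; \sum_{k \in P} 1 + \sum_{k \in P'} 2 \;=\; |P| + 2|P'| \;=\; |P| + 2(d - |P|) \;=\; 2d - |P| \;\geq\; 2d - a.
\end{equation}
Since $|A| = 2d$, this rearranges to $|A| = 2d \leq |B| + a$, contradicting the hypothesis $|A| > |B| + a$. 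Therefore at least one of the two bullets must hold.

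Two small points need care rather than cleverness. First, one should note $A$ is being treated as the $2d$-element index \emph{multiset} $\{i_1, j_1, \dots, i_d, j_d\}$ with $|A| = 2d$ (the pairs are given as an ordered list, so I would just say ``we may assume $|A| = 2d$'' or track the general case — but the application in Lemma~\ref{lem:conjugacy-module-annihilation} has $A$ arising from the $2$-cycles of an involution, so the $2d$ distinct elements case is what is used, and the inequality above is only strengthened if some $i_k, j_k$ coincide as elements). Second, the indices $k_1 < \cdots < k_{a+1}$ in the first bullet are required to be increasing, but since $P \subseteq [d]$ simply listing its elements in increasing order gives such a chain once $|P| \geq a+1$; I would phrase the contrapositive assumption as exactly ``$|P| \leq a$'' to match. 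I do not anticipate a genuine obstacle here — the only thing to get right is the bookkeeping that disjointness of the image blocks forces their sizes to add up inside $|B|$, and that the ``diagonal'' blocks are precisely the ones of size $1$.
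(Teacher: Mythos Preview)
Your argument is correct and is essentially the same contrapositive/pigeonhole count as the paper's: both assume the two bullets fail, bound the number $|P|=d'$ of ``diagonal'' pairs by $a$, use pairwise disjointness of the image blocks $f(\{i_k,j_k\})$ to add up their sizes inside $B$, and conclude $|B|\ge 2d-d'\ge 2d-a=|A|-a$. The only cosmetic difference is that the paper phrases the final inequality as $|f(A)|>|B|$ while you phrase it as $|B|\ge |A|-a$; these are the same computation.
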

\begin{proof}
    We prove this lemma by contradiction. Assume that neither of the two statements holds. Since the first statement does not hold, we have   
    \begin{equation}
         d^{\prime} := |\{ 1 \leq k \leq d \,:\, f(i_k) = f(j_k) \}| \leq a.
    \end{equation}
    Define $C \subseteq A$ by
    \begin{equation}
        C := \{ i_k, j_k \,:\, f(i_k) = f(j_k) \}.
    \end{equation}
    Then $|C| = 2d^{\prime} \leq 2a$, $|f(C)| = d^{\prime}$, and the restriction $f \mid_{A \setminus C}$ of $f$ to $A \setminus C$ is injective. 
     Since the second statement does not hold, we have 
    \begin{equation} 
    \label{eqn:intersection-images-empty}
    f(A\setminus C) \cap f(C) = \varnothing.
    \end{equation}
    We compute 
    \begin{multline}
    \label{eqn:leading-to-contradiction}
        \lvert f(A) \rvert = \lvert f(A\setminus C) \rvert + \lvert f(C) \rvert = \lvert f(A\setminus C) \rvert + d^{\prime} \\ = \lvert A\setminus C \rvert + d^{\prime}   = \lvert A \rvert - d^{\prime} > \lvert B \rvert +a -d^{\prime} \ge \lvert B \rvert
    \end{multline}
    where the first equality is justified by \eqref{eqn:intersection-images-empty}, the second equality uses $|f(C)| = d^{\prime}$, the third equality holds because $f \mid_{A \setminus C}$ is injective, the fourth equality holds because $|C| = 2d^{\prime}$, the strict inequality $>$ holds because $|A| > |B| + a$, and the weak inequality $\geq$ uses $a \geq d^{\prime}$. But \eqref{eqn:leading-to-contradiction} implies $|f(A)|> |B|$, which is a contradiction.
\end{proof}
 
For $1 \leq p \leq n$ we have the symmetrization operator $\eta_p = \sum_{w \in \symm_p} w$ over $\symm_p \subseteq \symm_n$. We use Lemma~\ref{lem:f-not-inj} to show that the images of certain matching monomials under $\eta_p$ lie in the ideal $I_{n,a}^\MMM$.

\begin{lemma}
\label{lem:matching-monomial-membership}
    Suppose $2 \mid (n-a)$.
    Let $w = (i_1,j_1) (i_2,j_2) \cdots (i_d,j_d) \in \symm_n$ be a matching with $d$ matched pairs. For $1 \leq p \leq n$ with $p > n - 2d + a$ we have $\eta_p \cdot \mmm(w) \in I_{n,a}^\MMM$.
\end{lemma}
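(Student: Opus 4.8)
The plan is to follow the proof of Lemma~\ref{lem:pm-symmetrizer-annihilation}, inserting an extra layer of bookkeeping to compensate for the fact that the diagonal variables $x_{i,i}$ lie in $I_n^\PM$ but \emph{not} in $I_{n,a}^\MMM$. Write $S = \{i_1, j_1, \dots, i_d, j_d\}$ and $T = S \cap [p]$. Grouping the elements of $\symm_p$ according to their restriction to $T$ (two of them produce the same monomial exactly when they agree on $T$, and each injection $T \hookrightarrow [p]$ arises from $(p-|T|)!$ of them), one gets
\[
    \eta_p \cdot \mmm(w) = (p - |T|)! \sum_{g \colon T \hookrightarrow [p]} \prod_{k=1}^d x_{g(i_k), g(j_k)},
\]
the sum being over injections $g$, with the convention $g(\ell) = \ell$ for $\ell \notin T$. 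Since $(p-|T|)! \neq 0$, it suffices to show that this sum over injections lies in $I_{n,a}^\MMM$.

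Next I would pass from injections to unrestricted functions by inclusion-exclusion over the partition lattice of $T$: expanding $[\,g\ \text{injective}\,] = \sum_\pi \mu(\hat 0, \pi)\,[\,g\ \text{constant on the blocks of}\ \pi\,]$ turns the injective sum into a signed sum, over set partitions $\pi$ of $T$, of terms $\sum_{f \colon T_\pi \to [p]} \prod_k x_{f(i_k), f(j_k)}$, where $T_\pi$ is $T$ with the blocks of $\pi$ collapsed. If $\pi$ identifies two indices that do not form a $2$-cycle of $w$, then every monomial of the corresponding term contains a row product $x_{i,j}x_{i,j'}$ or a column product $x_{i,j}x_{i',j}$ (after applying $x_{i,j}\equiv x_{j,i}$ if needed), so that term lies in $I_{n,a}^\MMM$; if $\pi$ identifies more than $a$ of the $2$-cycles of $w$, then every monomial of the corresponding term is divisible either by a product of more than $a$ distinct diagonal variables or by some $x_{i,i}^2$, and again lies in $I_{n,a}^\MMM$ by Definition~\ref{def:conjugacy-ideal}. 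Hence, modulo $I_{n,a}^\MMM$, only the partitions that merge at most $a$ of the $2$-cycles of $w$ (and nothing else) contribute, and each survivor is an unrestricted sum $\sum_{f \colon T_\pi \to [p]} \prod_k x_{f(i_k), f(j_k)}$.

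I would then flip the range of each surviving sum from $[p]$ to $[n] \setminus [p]$. Such a sum factors as a product of independent one-variable sums over the distinct coordinates of $T_\pi$; each factor is a sum over $[p]$ of variables sharing a row, sharing a column, or lying on the diagonal, so the row-sum, column-sum, and diagonal-sum generators of $I_{n,a}^\MMM$ let one replace it by minus the corresponding sum over $[n] \setminus [p]$. This rewrites $\eta_p \cdot \mmm(w)$, modulo $I_{n,a}^\MMM$ and up to nonzero scalars, as a signed sum of monomials $\prod_{k=1}^d x_{F(i_k), F(j_k)}$ with $F \colon S \to [n] \setminus [p]$. Now apply Lemma~\ref{lem:f-not-inj} with $A = S$ (the $d$ pairs $\{i_k,j_k\}$) and $B = [n] \setminus [p]$: the hypothesis $|A| > |B| + a$ becomes $2d > (n-p) + a$, which is exactly the assumption $p > n-2d+a$. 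So for each $F$ occurring, either there are $a+1$ indices $k$ with $F(i_k) = F(j_k)$, whence the monomial is divisible by a product of $a+1$ diagonal variables (or by some $x_{u,u}^2$ if two of those slots collide) and lies in $I_{n,a}^\MMM$; or two of its factors share a coordinate, whence (after using $x_{i,j}\equiv x_{j,i}$) the monomial is divisible by a row or column product and again lies in $I_{n,a}^\MMM$. Either way every term lies in $I_{n,a}^\MMM$, hence so does $\eta_p \cdot \mmm(w)$.

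The step I expect to be the main obstacle is the middle one. In the perfect-matching case the containment $x_{i,i} \in I_n^\PM$ collapsed the sum over injections to the sum over all functions in a single line; here the diagonal variables persist, so one must run the partition inclusion-exclusion and verify, partition by partition, that the degenerate contributions are absorbed --- the ``wrong-merge'' ones by the row/column-product generators and the ``too-many-fixed-points'' ones by the higher diagonal-product generators $\prod_{i \in U} x_{i,i}$ with $|U| > a$. Keeping the signs straight and checking that the range-flipping is legitimate term by term is where the care is needed; the final appeal to Lemma~\ref{lem:f-not-inj} is then essentially bookkeeping-free, since its two alternatives match these two families of generators exactly.
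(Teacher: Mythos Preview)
Your approach is correct and gives a genuinely different proof from the paper's. The paper argues by induction on $a$: it writes the injective sum as the unrestricted sum minus the non-injective sum, handles the unrestricted sum by flipping the range and invoking Lemma~\ref{lem:f-not-inj} (exactly as you do at the end), but then treats the non-injective part by grouping according to the number $m$ of collapsed $2$-cycles, using the diagonal-sum relation to peel off $m$ diagonal factors supported on $[n]\setminus[p]$, and invoking the inductive hypothesis on the $(n-m)\times(n-m)$ submatrix obtained by deleting those $m$ rows and columns (with the smaller parameter $a-m$). Your M\"obius-inversion argument effectively unrolls this induction: instead of passing to smaller matrices, you handle every surviving partition directly by flipping all coordinates at once (row, column, \emph{and} diagonal) and then appealing to Lemma~\ref{lem:f-not-inj} for each resulting monomial. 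This is cleaner in that it avoids the induction and the submatrix bookkeeping, at the cost of bringing in the partition-lattice inclusion--exclusion. One small imprecision to fix: the unrestricted sum for a surviving partition does not literally factor as a product of one-variable sums over the coordinates of $T_\pi$ --- a non-merged pair $\{i_k,j_k\}\subseteq T$ contributes a two-variable factor $x_{f(i_k),f(j_k)}$. What is true (and is all you need) is that the sum factors over the index $k$ as in Equation~\eqref{eq:pm-ann-help}, and each factor is either a partial diagonal sum (merged pair), a partial row or column sum ($|T_k|=1$), or a double sum that flips in two steps ($|T_k|=2$); all of these flip to $[n]\setminus[p]$ via the row-, column-, and diagonal-sum generators of $I_{n,a}^\MMM$.
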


The proof of Lemma~\ref{lem:matching-monomial-membership} is involved, but worth it. It will enable us to fruitfully apply Lemma~\ref{lem:eta-annihilation} to the ring $R(\MMM_{n,a})$.

\begin{proof}
We prove this lemma by induction on $a$. If $a=0$ (and therefore $n$ is even), the desired statement follows from Lemma~\ref{lem:pm-symmetrizer-annihilation}. 
We assume that $a>0$ and that the lemma holds for all smaller $a$'s. 

Let $S:=\{i_1,j_1,\cdots,i_d,j_d\}$ be the indices in the 2-cycles of $w$ and $T:=S\cap [p]$. Note that
\begin{equation}\eta_p \cdot \mmm(w) \doteq \sum_{f:T\hookrightarrow [p]} \prod_{k=1}^d x_{f(i_k),f(j_k)} = \sum_{f:T\rightarrow [p]} \prod_{k=1}^d x_{f(i_k),f(j_k)} - \sum_{\substack{f:T\rightarrow [p] \\ \text{not injective}}} \prod_{k=1}^d x_{f(i_k),f(j_k)} 
\end{equation}
with the convention $f(\ell)=\ell$ if $\ell \not\in T$. We claim that both
\begin{equation}\label{eq:match-ann-one}
    \sum_{f:T\rightarrow [p]} \prod_{k=1}^d x_{f(i_k),f(j_k)} \in I_{n,a}^\MMM
\end{equation}
and 
\begin{equation}\label{eq:match-ann-two}
    \sum_{\substack{f:T\rightarrow [p] \\ \text{not injective}}} \prod_{k=1}^d x_{f(i_k),f(j_k)} \in I_{n,a}^\MMM.
\end{equation}

To prove the membership \eqref{eq:match-ann-one}, one first shows 
\begin{equation}\label{eq:match-ann-one-help}
\sum_{f:T\rightarrow [p]} \prod_{k=1}^d x_{f(i_k),f(j_k)} \doteq \sum_{f:T\rightarrow ([n]\setminus[p]) \setminus (S\setminus T)} \prod_{k=1}^d x_{f(i_k),f(j_k)} \mod{I_{n,a}^\MMM}.
\end{equation}
The proof of Equation~\eqref{eq:match-ann-one-help} is similar to that of Equation~\eqref{eq:pm-ann-help} and is left to the reader.
Since 
\begin{multline}
\lvert ([n]\setminus[p]) \setminus (S\setminus T) \rvert = n-p-(2d-\lvert T \rvert) \\ =  n-p-2d+\lvert T \rvert < n-(n-2d+a)-2d-\lvert T\rvert = \lvert T \rvert - a, 
\end{multline}
by Lemma~\ref{lem:f-not-inj} for each $f:T\rightarrow ([n]\setminus[p]) \setminus (S\setminus T)$ indexing the sum on the right hand side of \eqref{eq:match-ann-one-help}, we have $\prod_{k=1}^d x_{f(i_k),f(j_k)} \in I_{n,a}^\MMM$. Thus the right hand side of \eqref{eq:match-ann-one-help} lies in $I_{n,a}^\MMM$, which proves \eqref{eq:match-ann-one}.

We turn to the proof of the membership~\eqref{eq:match-ann-two}. Since each row product $x_{i,j}\cdot x_{i,j^{\prime}}$, each column product $x_{i,j}\cdot x_{i^{\prime},j}$, and each symmetric difference $x_{i,j}-x_{j,i}$ lie in $I_{n,a}^\MMM$, we have
\begin{quote}$\prod_{k=1}^d x_{f(i_k),f(j_k)} \in I_{n,a}^\MMM$
for all $f:T\rightarrow[p]$ such that $f(\{i_k,j_k\})\cap f(\{i_\ell,j_\ell\}) \neq \varnothing$ for some distinct $k,\ell\in[d]$. \end{quote} 

Therefore, we have 
\begin{equation} \sum_{\substack{f:T\rightarrow [p] \\ \text{not injective}}} \prod_{k=1}^d x_{f(i_k),f(j_k)} \equiv \sum_{m=1}^d A_m \mod{I_{n,a}^\MMM} \end{equation}
where $A_m$ is defined by
\begin{equation}A_m\coloneqq\sum_f \prod_{k=1}^d x_{f(i_k),f(j_k)}\end{equation} 
and the sum is over all functions $f:T\rightarrow[p]$ such that 
\begin{itemize}
    \item $f(\{i_k,j_k\})\cap f(\{i_\ell,j_\ell\}) = \varnothing$ for each pair of distinct integers $k,\ell\in[d]$, and
    \item there exist exactly $m$ integers $k\in[d]$ such that $f(i_k)=f(j_k)$.
\end{itemize}

In order to prove the membership~\eqref{eq:match-ann-two}, it suffices to show that 
\begin{equation}\label{eq:match-ann-three}
    A_m \in I_{n,a}^\MMM
\end{equation}
for all integers $m\in[d]$. If $m>a$, the membership~\eqref{eq:match-ann-three} holds since each product of $> a$ diagonal variables lies in $I_{n,a}^\MMM$, so assume $m \le a$.  Group the terms in the sum defining $A_m$ according to the positions $k$ where $f(i_k)=f(j_k)$ so that $A_m$ is decomposed into $\binom{d}{m}$ smaller sums. We claim that all these smaller sums lie in $I_{n,a}^\MMM$. Without loss of generality, it suffices to show that 
\begin{equation}\label{eq:match-ann-two-help}
    \sum_f \prod_{k=1}^d x_{f(i_k),f(j_k)} \in I_{n,a}^\MMM
\end{equation}
where the sum is over all functions $f:T\rightarrow [p]$ such that
\begin{itemize}
    \item $f(\{i_k,j_k\})\cap f(\{i_\ell,j_\ell\}) = \varnothing$ for each pair of distinct integers $k,\ell\in[d]$, and
    \item $f(i_k)=f(j_k)$ if and only if $k\in[m]$. (Note that this means $i_k,j_k \in T$.)
\end{itemize}
Let $S^{\prime}=S\setminus \{i_1,j_1,\cdots,i_m,j_m\}$ and $T^{\prime}=T\setminus \{i_1,j_1,\cdots,i_m,j_m\}$. The polynomial in \eqref{eq:match-ann-two-help} satisfies
\begin{align} 
\sum_f \prod_{k=1}^d x_{f(i_k),f(j_k)} &\equiv \Bigg(\sum_{t=1}^p x_{t,t}\Bigg)^m \cdot \Bigg( \sum_{f:T^{\prime}\hookrightarrow [p]} \prod_{k=m+1}^d x_{f(i_k),f(j_k)} \Bigg) \\
&\equiv \Bigg(-\sum_{t=p+1}^n x_{t,t}\Bigg)^m \cdot \Bigg( \sum_{f:T^{\prime}\hookrightarrow [p]} \prod_{k=m+1}^d x_{f(i_k),f(j_k)} \Bigg) \\
&\equiv (-1)^m \sum_{t_1\neq t_2\neq\cdots\neq t_m \in [n]\setminus[p]} \Bigg(\prod_{\ell=1}^m x_{t_\ell,t_\ell}\Bigg) \cdot \Bigg( \sum_{f:T^{\prime}\hookrightarrow [p]} \prod_{k=m+1}^d x_{f(i_k),f(j_k)} \Bigg) \mod{I_{n,a}^\MMM}
\end{align}
where the first $\equiv$ holds because each row product $x_{i,j}\cdot x_{i,j^{\prime}}$ and each column product $x_{i,j}\cdot x_{i^{\prime},j}$ lie in $I_{n,a}^\MMM$ and the second $\equiv$ holds because $\sum_{i=1}^n x_{i,i} \in I_{n,a}^\MMM$. 

By the last paragraph, in order to show the ideal membership~\eqref{eq:match-ann-two-help}, it suffices to show  
\begin{equation}\label{eq:match-ann-final}
\Bigg(\prod_{\ell=1}^m x_{t_\ell,t_\ell}\Bigg) \cdot \Bigg( \sum_{f:T^{\prime}\hookrightarrow [p]} \prod_{k=m+1}^d x_{f(i_k),f(j_k)} \Bigg) \in I_{n,a}^\MMM
\end{equation}
for $p+1 \le t_1<t_2<\cdots<t_m\le n$.  If there exists some $\ell\in [m]$ such that $t_\ell \in S^{\prime}$, we have $t_\ell \in S^{\prime}\setminus T^{\prime}$ since $t_\ell>p$. Thus there exists $k\in[d]\setminus[m]$ such that $t_\ell=i_k$ and $f(i_k)=i_k$ where by convention $f(\ell) = \ell$ if $\ell \notin T'$.  Then the membership~\eqref{eq:match-ann-final} follows from that each row product $x_{i,j}\cdot x_{i,j^{\prime}}$ and each column product $x_{i,j}\cdot x_{i^{\prime},j}$ lie in $I_{n,a}^\MMM$. Thus we can assume that $t_\ell\not\in S^{\prime}$ for all $\ell\in[m]$. Consider the variable matrix $\overline{\xxx}$ obtained by removing rows $t_1,\cdots,t_m$ and columns $t_1,\cdots,t_m$ from $\xxx_{n\times n}$. Let $\CC[\overline{\xxx}]$ be the polynomial ring in these variables. Set 
$$n' := n-m, \quad \quad a' := a-m, \quad \quad d' := d-m,$$and define an ideal $I_{n^{\prime},a^{\prime}}^{\MMM^{\prime}} \subseteq \CC[\overline{\xxx}]$ using the same generators as in Definition~\ref{def:conjugacy-ideal}, but with respect to the smaller matrix $\overline{\xxx}$. Then we have \begin{equation}\sum_{f:T^{\prime}\hookrightarrow [p]} \prod_{k \, = \,m+1}^d x_{f(i_k),f(j_k)} \in I_{n^{\prime},a^{\prime}}^{\MMM^{\prime}}\end{equation} by the inductive hypothesis, since \begin{equation}p>n-2d+a=(n^{\prime}+m)-2(d^{\prime}+m)+(a^{\prime}+m)=n^{\prime}-2d^{\prime}+a^{\prime}.\end{equation} (Here $a^{\prime}=a-m \ge 0$ since we have assumed that $m\le a$ below Equation~\eqref{eq:match-ann-three}.)  The membership~\eqref{eq:match-ann-final} follows from the easily verified containment 
    \begin{equation}
    \Bigg(\prod_{\ell=1}^m x_{t_\ell,t_\ell}\Bigg) \cdot  I^{\MMM^{\prime}}_{n^{\prime},a^{\prime}} \subseteq I^{\MMM}_{n,a}
    \end{equation}
    of subspaces of $\CC[\xxx_{n \times n}]$.
\end{proof}

As promised, Lemma~\ref{lem:matching-monomial-membership} has a consequence for annihilation under the $\eta$-operators. This imposes a crucial restriction on the irreducible representations which can appear in the graded pieces of $R(\MMM_{n,a})$.

\begin{lemma}
    \label{lem:conjugacy-module-annihilation}
    Suppose $2 \mid (n-a)$. For any $1 \leq j \leq n$ with $j > n - 2d + a$ we have $$\eta_j \cdot R(\MMM_{n,a})_d = 0.$$
    In particular, if an $\symm_n$-irreducible $V^\lambda$ appears in $R(\MMM_{n,a})_d$ we have $\lambda_1 \leq n - 2d + a$.
\end{lemma}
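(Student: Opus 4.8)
The plan is to derive both assertions from Lemma~\ref{lem:matching-monomial-membership}, together with the containment $I_{n,a}^\MMM \subseteq \gr\,\II(\MMM_{n,a})$ of Lemma~\ref{lem:conjugacy-ideal-containment}. The latter makes $R(\MMM_{n,a}) = \CC[\xxx_{n \times n}]/\gr\,\II(\MMM_{n,a})$ a graded, $\symm_n$-equivariant quotient of $\CC[\xxx_{n \times n}]/I_{n,a}^\MMM$, so it is enough to prove that $\eta_j$ annihilates the degree $d$ piece $(\CC[\xxx_{n \times n}]/I_{n,a}^\MMM)_d$ whenever $j > n - 2d + a$; the statement about $R(\MMM_{n,a})_d$ then follows by passing to the quotient.

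To control $(\CC[\xxx_{n \times n}]/I_{n,a}^\MMM)_d$ I would use the matching monomial basis. Since $I_n^\MMM \subseteq I_{n,a}^\MMM$ and the monomial $\mmm(w)$ has degree equal to the number of $2$-cycles of $w$, Theorem~\ref{thm:matching-monomial-basis} shows that $\{ \mmm(w) \,:\, w \in \MMM_n,\ w \text{ has exactly } d \text{ 2-cycles} \}$ descends to a spanning set of $(\CC[\xxx_{n \times n}]/I_{n,a}^\MMM)_d$ (note we take all such $w$, not merely those lying in $\MMM_{n,a}$). For each such $w$, Lemma~\ref{lem:matching-monomial-membership} with $p = j$ gives $\eta_j \cdot \mmm(w) \in I_{n,a}^\MMM$ as soon as $j > n - 2d + a$. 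Running over the spanning set yields $\eta_j \cdot (\CC[\xxx_{n \times n}]/I_{n,a}^\MMM)_d = 0$, and hence $\eta_j \cdot R(\MMM_{n,a})_d = 0$.

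For the final sentence I would invoke Lemma~\ref{lem:eta-annihilation}. If $n - 2d + a \geq n$ the claimed bound $\lambda_1 \leq n - 2d + a$ is vacuous, since $\lambda \vdash n$ forces $\lambda_1 \leq n$. Otherwise put $j = n - 2d + a + 1 \leq n$; since $\eta_j$ kills $R(\MMM_{n,a})_d$ and $\eta_j \cdot V^\lambda \neq 0$ exactly when $\lambda_1 \geq j$, no irreducible $V^\lambda$ with $\lambda_1 \geq n - 2d + a + 1$ can occur in $R(\MMM_{n,a})_d$, i.e.\ every $V^\lambda$ appearing there satisfies $\lambda_1 \leq n - 2d + a$.

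This argument carries no real difficulty of its own: all of the substance is already contained in Lemma~\ref{lem:matching-monomial-membership}. The only point requiring a little attention is the spanning step, where one must remember that the degree of $\mmm(w)$ records the number of $2$-cycles of $w$, so that after enlarging $I_n^\MMM$ to $I_{n,a}^\MMM$ the degree-$d$ matching monomials indexed by all of $\MMM_n$ already span the relevant graded piece.
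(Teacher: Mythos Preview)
Your proof is correct and follows essentially the same route as the paper's: both use Theorem~\ref{thm:matching-monomial-basis} to obtain the degree-$d$ matching monomials as a spanning set, apply Lemma~\ref{lem:matching-monomial-membership} to each of them, and finish with Lemma~\ref{lem:eta-annihilation}. The only difference is cosmetic: you pass explicitly through the intermediate quotient $\CC[\xxx_{n\times n}]/I_{n,a}^\MMM$ before descending via Lemma~\ref{lem:conjugacy-ideal-containment} to $R(\MMM_{n,a})$, whereas the paper works directly in $R(\MMM_{n,a})$ (implicitly using that same containment to push $\eta_j\cdot\mmm(w)\in I_{n,a}^\MMM$ into $\gr\,\II(\MMM_{n,a})$).
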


\begin{proof}
    By Theorem~\ref{thm:matching-monomial-basis} and the fact that $\MMM_{n,a} \subseteq \MMM_n$, the set $\{ \mmm(w) \,:\, w \in \MMM_n \}$ of matching monomials descends to a spanning set of $R(\MMM_{n,a})$. The first statement follows from Lemma~\ref{lem:matching-monomial-membership}. The last statement follows from Lemma~\ref{lem:eta-annihilation}.
\end{proof}

\subsection{Surjections of $\symm_n$-modules} The proof of Lemma~\ref{lem:conjugacy-module-annihilation} used the fact that the matching monomials $\{ \mmm(w) \,:\, w \in \MMM_n \}$ descend to a spanning set of $R(\MMM_{n,a})$. In fact, this spanning set can be trimmed down. This results in a degree bound on the module $R(\MMM_{n,a})$.

\begin{lemma}
    \label{lem:R-degree-bound}
    Suppose $2 \mid (n-a)$. The set of matching monomials
   $$ \bigsqcup_{b \, \geq \, a} \{ \mmm(w) \,:\, w \in \MMM_{n,b} \}$$
    corresponding to matchings with at least $a$ fixed points descends to a spanning set of $R(\MMM_{n,a})$. In particular, we have 
    $$ R(\MMM_{n,a})_d = 0 \quad \text{for $d > (n-a)/2$.}$$
\end{lemma}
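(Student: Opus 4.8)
The plan is to bootstrap off the spanning set we already have. By Theorem~\ref{thm:matching-monomial-basis} and the inclusion $\MMM_{n,a} \subseteq \MMM_n$, the full family $\{\mmm(w) : w \in \MMM_n\}$ descends to a spanning set of $R(\MMM_{n,a})$, so it suffices to show that the monomials $\mmm(w)$ indexed by matchings with fewer than $a$ fixed points can be discarded. Since $\MMM_n = \bigsqcup_b \MMM_{n,b}$ and a matching $w \in \MMM_{n,b}$ has exactly $(n-b)/2$ matched pairs, everything reduces to the single claim: \emph{if $w \in \MMM_{n,b}$ with $b < a$, then $\mmm(w)$ descends to $0$ in $R(\MMM_{n,a})$.}

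To prove this claim I would show the stronger statement $\mmm(w) \in \II(\MMM_{n,a})$; since $\mmm(w)$ is homogeneous it then equals its own top-degree component $\tau(\mmm(w))$, hence lies in $\gr\,\II(\MMM_{n,a})$ and is therefore $0$ in $R(\MMM_{n,a})$. For the vanishing, fix $M \in \MMM_{n,a}$, a symmetric permutation matrix with exactly $a$ ones on the diagonal. Then $\mmm(w)(M) = \prod M_{i,j}$, the product over the $2$-cycles $\{i,j\}$ of $w$, is a product of $0$'s and $1$'s. If it were nonzero, we would have $M_{i,j} = 1$ for every $2$-cycle of $w$; since $M$ is a symmetric permutation matrix, this forces the involution it represents to contain all $(n-b)/2$ disjoint transpositions of $w$, so every fixed point of $M$ lies among the $b$ indices fixed by $w$. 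Then $M$ would have at most $b < a$ fixed points, contradicting $M \in \MMM_{n,a}$. Hence $\mmm(w)$ vanishes identically on $\MMM_{n,a}$.

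Granting the claim, the trimmed family $\bigsqcup_{b \geq a}\{\mmm(w): w \in \MMM_{n,b}\}$ still descends to a spanning set of $R(\MMM_{n,a})$. Every monomial in it has degree $(n-b)/2 \leq (n-a)/2$, so $R(\MMM_{n,a})$ is spanned by homogeneous elements of degree at most $(n-a)/2$; as $R(\MMM_{n,a})$ is a graded vector space, this gives $R(\MMM_{n,a})_d = 0$ for $d > (n-a)/2$. I do not expect a genuine obstacle in this argument: it uses only the explicit monomial spanning set of Theorem~\ref{thm:matching-monomial-basis} and the elementary observation that a matrix in $\MMM_{n,a}$ cannot "contain" a matching with too many pairs. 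The one point to handle carefully is the congruence $b \equiv a \equiv n \pmod 2$, which is exactly what makes the conditions "$b < a$" and "$(n-b)/2 > (n-a)/2$" equivalent; the genuinely delicate analysis of $R(\MMM_{n,a})$ is reserved for the module-structure theorem later in the section.
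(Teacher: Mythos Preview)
Your proof is correct and follows the same approach as the paper: start from the matching-monomial spanning set of $R(\MMM_n)$, observe that $\mmm(w)$ vanishes on $\MMM_{n,a}$ when $w$ has too many $2$-cycles (hence lies in $\II(\MMM_{n,a})$ and, being homogeneous, in $\gr\,\II(\MMM_{n,a})$), and deduce the degree bound. In fact you have the inequality the right way around: the paper's proof contains a typo (``$b>a$'' where ``$b<a$'' is meant), which your argument correctly avoids.
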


\begin{proof}
    The second statement follows from the first. For the first statement, observe that if $w \in \MMM_{n,b}$ for $b > a$ then $\mmm(w)$ vanishes on the locus $\MMM_{n,a}$ so that $\mmm(w) \in \II(\MMM_{n,a})$ and therefore $\mmm(w) \in \gr \, \II(\MMM_{n,a})$.  Since $\{ \mmm(w) \,:\, w \in \MMM_n \}$ descends to a spanning set of $R(\MMM_{n,a})$ we are done.
\end{proof}

Lemma~\ref{lem:R-degree-bound} allows us to restrict our attention to degrees $d \leq (n-a)/2$. In order to understand the structure of these smaller degrees, we consider the quotients
$\CC[\xxx_{n \times n}]_{\leq d}/(\II(\MMM_{n,a}) \cap \CC[\xxx_{n \times n}]_{\leq d})$. These quotient spaces have the following spanning sets.

\begin{lemma}
    \label{lem:d-quotient-spanning}
    For any $0 \leq d \leq (n-a)/2$, the vector space $\CC[\xxx_{n \times n}]_{\leq d}/(\II(\MMM_{n,a}) \cap \CC[\xxx_{n \times n}]_{\leq d})$ is spanned by 
    $$\{ \mmm(w) \,:\, \text{$w \in \MMM_n$ has exactly $d$ matched pairs} \}.$$
\end{lemma}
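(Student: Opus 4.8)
The plan is to combine the matching-monomial basis of $R(\MMM_n)$ from Theorem~\ref{thm:matching-monomial-basis}, which supplies a crude spanning set of the truncated quotient, with a ``degree-raising'' relation coming from the defining equations of $\MMM_{n,a}$, which lets one trade a matching monomial of low degree for a linear combination of matching monomials of degree exactly $d$ without ever leaving the truncation $\CC[\xxx_{n \times n}]_{\leq d}$.

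\emph{Step 1 (a crude spanning set).} Since $\MMM_{n,a} \subseteq \MMM_n$ we have $\II(\MMM_n) \subseteq \II(\MMM_{n,a})$, hence $\II(\MMM_n) \cap \CC[\xxx_{n \times n}]_{\leq d} \subseteq \II(\MMM_{n,a}) \cap \CC[\xxx_{n \times n}]_{\leq d}$ and a surjection of vector spaces from $\CC[\xxx_{n \times n}]_{\leq d}/(\II(\MMM_n) \cap \CC[\xxx_{n \times n}]_{\leq d})$ onto $\CC[\xxx_{n \times n}]_{\leq d}/(\II(\MMM_{n,a}) \cap \CC[\xxx_{n \times n}]_{\leq d})$. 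By Theorem~\ref{thm:matching-monomial-basis} the set $\{ \mmm(w) \,:\, w \in \MMM_n,\ \deg \mmm(w) \leq d \}$ is a homogeneous basis of $R(\MMM_n)_{\leq d}$, so Lemma~\ref{lem:basis-transfer} (applied with $I = \II(\MMM_n)$) shows it descends to a basis of the source of this surjection; pushing it forward, the same monomials span the target. This reduces the lemma to showing that each matching monomial $\mmm(w)$ with $w$ having $e < d$ matched pairs can be rewritten, modulo $\II(\MMM_{n,a}) \cap \CC[\xxx_{n \times n}]_{\leq d}$, as a linear combination of matching monomials with exactly $d$ matched pairs.

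\emph{Step 2 (raising the degree).} Fix such a $w$, let $F \subseteq [n]$ with $|F| = n - 2e$ be its fixed-point set, and set $c := \tfrac{n-a}{2} - e$; note $c \geq 1$ because $e < d \leq \tfrac{n-a}{2}$. The heart of the argument is the claim that
$$ g_w \;:=\; \mmm(w) \cdot \Bigl( \sum_{\substack{\{p,q\} \subseteq F \\ p < q}} x_{p,q} \;-\; c \Bigr) \;\in\; \II(\MMM_{n,a}); $$
one checks this by evaluating $g_w$ at a permutation matrix $P(v)$ with $v \in \MMM_{n,a}$: if $v$ omits some $2$-cycle of $w$ then $\mmm(w)(P(v)) = 0$, while otherwise $v$ restricts to an involution on $F$ whose fixed points are exactly the fixed points of $v$, so $v|_F$ has $a$ fixed points and therefore exactly $(|F| - a)/2 = c$ two-cycles, which is precisely the value of $\sum_{\{p,q\} \subseteq F,\, p<q} x_{p,q}$ at $P(v)$. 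Since $\deg g_w = e+1 \leq d$, we get $g_w \in \II(\MMM_{n,a}) \cap \CC[\xxx_{n \times n}]_{\leq d}$. Next observe that $\mmm(w) \cdot x_{p,q} = \mmm(w \cdot (p,q))$ whenever $\{p,q\} \subseteq F$ with $p < q$, where $w \cdot (p,q)$ adjoins the transposition $(p,q)$ (disjoint from the support of $w$) and so has $e+1$ matched pairs; expanding $g_w$ and dividing by $c$ therefore yields
$$ \mmm(w) \;\equiv\; \frac{1}{c} \sum_{\substack{\{p,q\} \subseteq F \\ p < q}} \mmm\bigl( w \cdot (p,q) \bigr) \pmod{\II(\MMM_{n,a}) \cap \CC[\xxx_{n \times n}]_{\leq d}}. $$
Iterating this substitution on any summand that still has fewer than $d$ matched pairs — each step raises the number of matched pairs by one, and every matching monomial appearing along the way has degree at most $d$ — rewrites $\mmm(w)$ as a linear combination of matching monomials with exactly $d$ matched pairs modulo $\II(\MMM_{n,a}) \cap \CC[\xxx_{n \times n}]_{\leq d}$, and together with Step~1 this finishes the proof.

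I expect the only genuinely delicate point to be the verification that $g_w \in \II(\MMM_{n,a})$: it rests on the elementary but essential fact that the number of $2$-cycles of an involution on a prescribed ground set is determined by the number of its fixed points, and it must be paired with the bookkeeping inequality $e+1 \leq d$, which is exactly what keeps every relation used inside the truncated polynomial space $\CC[\xxx_{n \times n}]_{\leq d}$. Everything else is formal.
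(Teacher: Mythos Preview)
Your proof is correct and follows essentially the same approach as the paper's: both first use Lemma~\ref{lem:basis-transfer} and the matching-monomial basis to reduce to a spanning set of matching monomials of degree $\leq d$, then prove the identical degree-raising relation $\mmm(w) \equiv \frac{1}{c}\sum \mmm(w\cdot(p,q))$ with $c = \tfrac{n-a}{2}-e$. The only cosmetic difference is that you exhibit the explicit polynomial $g_w \in \II(\MMM_{n,a})$ and verify vanishing pointwise, whereas the paper passes through the identification $\CC[\xxx_{n\times n}]/\II(\MMM_{n,a}) \cong \CC[\MMM_{n,a}]$, $\mmm(w)\mapsto \sum_{w\prec u} u$, and does the same count there; the resulting relation (Equation~\eqref{eq:ungrad-map} in the paper) and the iteration are identical to yours.
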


\begin{proof}
    Lemmas~\ref{lem:basis-transfer} and \ref{lem:R-degree-bound} imply that $\{\mmm(w):\text{$w\in\MMM_n$ has at most $d$ matched pairs}\}$ descends to a spanning set of $\CC[\xxx_{n \times n}]_{\leq d}/(\II(\MMM_{n,a}) \cap \CC[\xxx_{n \times n}]_{\leq d})$. Thus it suffices to show that for any involution $w^{\prime}\in \MMM_n$ with $d^{\prime}<d$ matched pairs, $\mmm(w^{\prime})\in\spa(\{\mmm(w):\text{$w\in\MMM_n$ has $d$ matched pairs}\})$ modulo $\II(\MMM_{n,a})\cap\CC[\xxx_{n\times n}]_{\le d}$. By induction, it suffices to show the following claim.
      \begin{quote}
          {\bf Claim: }
            {\em If $w' \in \MMM_n$ has $d' < d$ matched pairs, we have $$\mmm(w^{\prime})\in\spa(\{\mmm(w):\text{$w\in\MMM_n$ has $d^{\prime}+1$ matched pairs}\})$$ inside the quotient space $\CC[\xxx_{n\times n}]_{\le d}/(\II(\MMM_{n,a})\cap\CC[\xxx_{n\times n}]_{\le d})$.}
      \end{quote}
      
      In fact, we have the following identification.
      \begin{align}\label{eq:identify-ungrad}
          \CC[\xxx_{n\times n}]/\II(\MMM_{n,a}) &\cong \CC[\MMM_{n,a}] \\
          \nonumber \mmm(w) &\mapsto \sum_{u}u
      \end{align}
      where the summation is over all involutions $u\in \MMM_{n,a}$ such that $w \prec u$. Here we write $w \prec u$ if $u$ is obtained by adding matched pairs to $w$. Further, we write $w \precdot u$ if $u$ is obtained by adding one matched pair to $w$.

      Under the identification \eqref{eq:identify-ungrad}, we have
      \[
          \sum_{w^{\prime}\precdot w^{\prime\prime}}\mmm(w^{\prime\prime}) = \sum_{w^{\prime}\precdot w^{\prime\prime}}\sum_{\substack{v^{\prime\prime} \in \MMM_{n,a} \\ w^{\prime\prime}\prec v^{\prime\prime}}}v^{\prime\prime} = 
          \Big(\frac{n-a}{2}-d^{\prime}\Big)\cdot \sum_{\substack{v^{\prime}\in\MMM_{n,a} \\ w^{\prime} \prec v^{\prime}}}v^{\prime} = \Big(\frac{n-a}{2}-d^{\prime}\Big) \cdot \mmm(w^{\prime})
      \]
      where the second $=$ is true since for each $v^{\prime}\in\MMM_{n,a}$ such that $w^{\prime}\prec v^{\prime}$, there are exactly $\frac{n-a}{2}-d^{\prime}$ involutions $w^{\prime\prime}$ such that $w^{\prime}\precdot w^{\prime\prime} \prec v^{\prime}$. Then
      \begin{equation}\label{eq:ungrad-map}
      \mmm(w^{\prime}) \equiv \frac{2}{n-2d^{\prime}-a} \cdot \sum_{w^{\prime}\precdot w^{\prime\prime}}\mmm(w^{\prime\prime}) \mod \II(\MMM_{n,a}) \cap \CC[\xxx_{n \times n}]_{\leq d}
      \end{equation}
      and the result follows.
\end{proof}

Lemma~\ref{lem:d-quotient-spanning} has the following representation-theoretic consequence relating the modules $R(\MMM_n)$ and $R(\MMM_{n,a})$.

\begin{lemma}
    \label{lem:surjection-lemma}
    For $0 \leq d \leq (n-a)/2$ there exists a surjection of ungraded $\symm_n$-modules
    $$\varphi_{n,a,d}: R(\MMM_n)_d \twoheadrightarrow R(\MMM_{n,a})_{\leq d}.$$
\end{lemma}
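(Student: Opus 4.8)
The plan is to construct $\varphi_{n,a,d}$ directly on the matching‑monomial basis of $R(\MMM_n)_d$. By Theorem~\ref{thm:matching-monomial-basis} the set $\{\mmm(w) \,:\, w \in \MMM_n \text{ has exactly $d$ matched pairs}\}$ is a basis of $R(\MMM_n)_d$, and the conjugation action permutes this basis via $v \cdot \mmm(w) = \mmm(v w v^{-1})$: indeed $v \cdot \mmm(w) = \prod_{i < w(i)} x_{v(i),v(w(i))}$, and the relations $x_{p,q} \equiv x_{q,p}$ that hold in $R(\MMM_n) = \CC[\xxx_{n \times n}]/I_n^\MMM$ rewrite this product as the product of upper‑triangular variables indexed by the matched pairs of $v w v^{-1}$. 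On the other side, $\MMM_{n,a}$ is stable under the conjugation action of the permutation matrix group $\symm_n$, so Lemma~\ref{lem:refined-orbit-harmonics} supplies an isomorphism of ungraded $\symm_n$-modules
$$R(\MMM_{n,a})_{\leq d} \;\cong_{\symm_n}\; \CC[\xxx_{n \times n}]_{\leq d}\big/\big(\II(\MMM_{n,a}) \cap \CC[\xxx_{n \times n}]_{\leq d}\big) =: Q_d.$$

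First I would define a linear map $\widetilde\varphi \colon R(\MMM_n)_d \to Q_d$ by sending each basis vector $\mmm(w)$ (with $w$ having exactly $d$ matched pairs) to the class of $\mmm(w)$ in $Q_d$ and extending linearly; this is automatically well defined because these $\mmm(w)$ form a basis of $R(\MMM_n)_d$, so no descent condition must be checked. Equivariance of $\widetilde\varphi$ follows from the computation above together with its verbatim analogue in $Q_d$: the relations $x_{p,q} \equiv x_{q,p}$ persist in $Q_d$ since $x_{p,q} - x_{q,p} \in \II(\MMM_{n,a})$ has degree $1 \leq d$ (the case $d = 0$ being trivial), so $v \cdot \mmm(w)$ and $v \cdot [\mmm(w)]$ are both represented by $\mmm(v w v^{-1})$. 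Surjectivity of $\widetilde\varphi$ is precisely Lemma~\ref{lem:d-quotient-spanning}: since $0 \leq d \leq (n-a)/2$, that lemma exhibits a spanning set of $Q_d$ consisting of the classes of $\mmm(w)$ for $w \in \MMM_n$ with exactly $d$ matched pairs, which is exactly the image of our chosen basis of $R(\MMM_n)_d$. Composing $\widetilde\varphi$ with the inverse of the isomorphism of Lemma~\ref{lem:refined-orbit-harmonics} yields the desired $\varphi_{n,a,d} \colon R(\MMM_n)_d \twoheadrightarrow R(\MMM_{n,a})_{\leq d}$.

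I do not anticipate a genuine obstacle here: the substantive ingredient --- the ``degree collapse'' allowing the single graded piece $R(\MMM_n)_d$ to hit the entire truncation $R(\MMM_{n,a})_{\leq d}$ rather than merely $R(\MMM_{n,a})_d$ --- has already been carried out in Lemma~\ref{lem:d-quotient-spanning} via the relation~\eqref{eq:ungrad-map}. The only point requiring care is to keep the graded module $R(\MMM_{n,a})_{\leq d}$ and the ungraded quotient $Q_d$ conceptually distinct, invoking their identification only at the level of ungraded $\symm_n$-modules; note in particular that the naive graded surjection $R(\MMM_n) \twoheadrightarrow R(\MMM_{n,a})$ arising from $\gr\,\II(\MMM_n) \subseteq \gr\,\II(\MMM_{n,a})$ carries $R(\MMM_n)_d$ only onto $R(\MMM_{n,a})_d$, which is weaker than what is wanted.
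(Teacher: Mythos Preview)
Your proposal is correct and follows essentially the same route as the paper: both arguments invoke Lemma~\ref{lem:refined-orbit-harmonics} to replace $R(\MMM_{n,a})_{\leq d}$ by the ungraded quotient $Q_d$, use Theorem~\ref{thm:matching-monomial-basis} to get the matching-monomial basis of $R(\MMM_n)_d$, and then cite Lemma~\ref{lem:d-quotient-spanning} to see that $\mmm(w) \mapsto [\mmm(w)]$ is surjective. Your version is slightly more explicit about why the map is $\symm_n$-equivariant (via the relation $x_{p,q} \equiv x_{q,p}$ in both quotients), a point the paper leaves implicit.
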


\begin{proof}
    Lemma~\ref{lem:refined-orbit-harmonics} gives an isomorphism of ungraded $\symm_n$-modules 
    \begin{equation}
    \label{eqn:orbit-harmonics-isomorphism-degree-restricted}
        R(\MMM_{n,a})_{\leq d} \cong \CC[\xxx_{n \times n}]_{\leq d}/(\II(\MMM_{n,a}) \cap \CC[\xxx_{n \times n}]_{\leq d}).
    \end{equation}
    This given, it suffices to describe a surjection of ungraded $\symm_n$-modules 
    \begin{equation}
        R(\MMM_n)_d \twoheadrightarrow \CC[\xxx_{n \times n}]_{\leq d}/(\II(\MMM_{n,a}) \cap \CC[\xxx_{n \times n}]_{\leq d}).
    \end{equation}
    Theorem~\ref{thm:matching-monomial-basis} implies that the set $\BBB$ of matching monomials $\mmm(w)$ corresponding to involutions $w \in \MMM_n$ with exactly $d$ matched pairs descends to a basis of $R(\MMM_n)$. Lemma~\ref{lem:d-quotient-spanning} implies that $\BBB$ descends to a spanning set of $\CC[\xxx_{n \times n}]_{\leq d}/(\II(\MMM_{n,a}) \cap \CC[\xxx_{n \times n}]_{\leq d})$. The map $\mmm(w) \mapsto \mmm(w)$ induces the desired surjection of $\symm_n$-modules.
\end{proof}

The quotient module $\CC[\xxx_{n \times n}]_{\leq d}/(\II(\MMM_{n,a}) \cap \CC[\xxx_{n \times n}]_{\leq d})$ appearing in the above proof has a combinatorial interpretation. Define a group algebra element $\epsilon_{n,a,d} \in \CC[\symm_n]$ as the sum
\begin{equation}
    \epsilon_{n,a,d} := \sum_{w} w
\end{equation}
where $w$ ranges over involutions $w \in \MMM_{n,a}$ with $a$ fixed points such that $w(i) = i+1$ for $i = 1, 3, \dots, 2d-1$. For example, we have
$$\epsilon_{7,1,2} = (1,2)  (3,4) \cdot ( (5,6) + (5,7) + (6,7)) \in \CC[\symm_7].$$
Observe that the polynomial $\mmm((1,2) (3,4)) = x_{1,2} \cdot x_{3,4}$ evaluates to 1 on the terms 
$$(1,2)(3,4)(5,6), (1,2)(3,4)(5,7),\text{ and }(1,2)(3,4)(6,7)$$ in the expansion of the above product, and that $\mmm((1,2) (3,4))$ vanishes on all other points of $\MMM_{7,1}$. These observations lead to the following lemma.

\begin{lemma}
    \label{lem:L-module-identification}
    Let $\symm_n$ act on $\CC[\symm_n]$ by conjugation.
    If $2 \mid (n-a)$ and $0 \leq d \leq (n-a)/2$, let $L_{n,a,d} \subseteq \CC[\symm_n]$ be the cyclic $\symm_n$-submodule geerated by $\epsilon_{n,a,d}$. In symbols, we have
    $$L_{n,a,d} := \mathrm{span} \{ w \cdot \epsilon_{n,a,d} \cdot w^{-1} \,:\, w \in \symm_n \}.$$
    There hold isomorphisms of ungraded $\symm_n$-modules
    $$R(\MMM_{n,a})_{\leq d} \cong  \CC[\xxx_{n \times n}]_{\leq d}/(\II(\MMM_{n,a}) \cap \CC[\xxx_{n \times n}]_{\leq d}) \cong L_{n,a,d}.$$
\end{lemma}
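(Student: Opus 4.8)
The first isomorphism is immediate from Lemma~\ref{lem:refined-orbit-harmonics} (applied with $\Zpoints = \MMM_{n,a}$ and $m = d$), so the plan is to establish the second one, namely $\CC[\xxx_{n \times n}]_{\leq d}/(\II(\MMM_{n,a}) \cap \CC[\xxx_{n \times n}]_{\leq d}) \cong L_{n,a,d}$ as $\symm_n$-modules. First I would set up the evaluation map carefully. Since $\MMM_{n,a}$ is a single conjugacy class in $\symm_n$, its linear span $\spa\{u \,:\, u \in \MMM_{n,a}\} \subseteq \CC[\symm_n]$ is stable under conjugation, and identifying a function on $\MMM_{n,a}$ with the corresponding element of this span turns the Lagrange-interpolation isomorphism \eqref{eq:identify-ungrad} into an $\symm_n$-equivariant isomorphism
$$\Psi \colon \CC[\xxx_{n \times n}]/\II(\MMM_{n,a}) \xrightarrow{\ \sim\ } \spa\{u \,:\, u \in \MMM_{n,a}\}, \qquad \Psi(\mmm(w)) = \sum_{u \,:\, w \,\prec\, u} u,$$
where $\symm_n$ acts by conjugation on the target. (Equivariance holds because polynomial evaluation commutes with the $\symm_n$-actions, and because the relation $w \prec u$ is preserved under simultaneous conjugation of $w$ and $u$.) Restricting $\Psi$ to $\CC[\xxx_{n \times n}]_{\leq d}$, whose kernel inside is exactly $\II(\MMM_{n,a}) \cap \CC[\xxx_{n \times n}]_{\leq d}$, identifies the quotient $\CC[\xxx_{n \times n}]_{\leq d}/(\II(\MMM_{n,a}) \cap \CC[\xxx_{n \times n}]_{\leq d})$ with the $\symm_n$-submodule $\Psi(\CC[\xxx_{n \times n}]_{\leq d}) \subseteq \CC[\symm_n]$.

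Next I would identify this submodule with $L_{n,a,d}$. By Lemma~\ref{lem:d-quotient-spanning}, the quotient $\CC[\xxx_{n \times n}]_{\leq d}/(\II(\MMM_{n,a}) \cap \CC[\xxx_{n \times n}]_{\leq d})$ is spanned by the images of $\mmm(w)$ for $w$ ranging over involutions of $\MMM_n$ with exactly $d$ matched pairs, i.e. over the conjugacy class $\MMM_{n,n-2d}$; hence $\Psi(\CC[\xxx_{n \times n}]_{\leq d}) = \spa\{\Psi(\mmm(w)) \,:\, w \in \MMM_{n,n-2d}\}$. Taking the base point $w_0 := (1,2)(3,4)\cdots(2d-1,2d) \in \MMM_{n,n-2d}$, the condition $w_0 \prec u$ on $u \in \MMM_{n,a}$ says precisely that $u(i) = i+1$ for $i = 1, 3, \dots, 2d-1$, which is the defining condition on the terms of $\epsilon_{n,a,d}$; thus $\Psi(\mmm(w_0)) = \epsilon_{n,a,d}$. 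Since every $w \in \MMM_{n,n-2d}$ equals $g w_0 g^{-1}$ for some $g \in \symm_n$, equivariance of $\Psi$ together with the conjugation-stability of $\MMM_{n,a}$ gives $\Psi(\mmm(g w_0 g^{-1})) = g \cdot \epsilon_{n,a,d} \cdot g^{-1}$, so $\Psi(\CC[\xxx_{n \times n}]_{\leq d}) = \spa\{g \cdot \epsilon_{n,a,d} \cdot g^{-1} \,:\, g \in \symm_n\} = L_{n,a,d}$. Chaining this with Lemma~\ref{lem:refined-orbit-harmonics} yields the claim.

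This argument is essentially a careful tracking of identifications, so there is no deep obstacle; the one point requiring genuine care is that the image $\Psi(\CC[\xxx_{n \times n}]_{\leq d})$ is precisely the \emph{cyclic} $\symm_n$-module generated by $\epsilon_{n,a,d}$, rather than merely some submodule containing it. That is exactly what the combination of the spanning statement of Lemma~\ref{lem:d-quotient-spanning} (which controls the image from above) and the equivariant identity $\Psi(\mmm(g w_0 g^{-1})) = g \cdot \epsilon_{n,a,d} \cdot g^{-1}$ (which controls it from below) delivers. A secondary bookkeeping point is to confirm at the outset that the isomorphism \eqref{eq:identify-ungrad} is $\symm_n$-equivariant for the conjugation action on $\MMM_{n,a}$ and that $\spa\{u : u \in \MMM_{n,a}\}$ is conjugation-stable, both of which are routine.
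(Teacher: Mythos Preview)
Your proposal is correct and follows essentially the same route as the paper: invoke Lemma~\ref{lem:refined-orbit-harmonics} for the first isomorphism, use Lemma~\ref{lem:d-quotient-spanning} to span the middle quotient by matching monomials with exactly $d$ pairs, and then identify the image of each such $\mmm(w)$ under the evaluation map with a conjugate of $\epsilon_{n,a,d}$. Your handling of the conjugating element is in fact more careful than the paper's: you correctly write $\Psi(\mmm(g w_0 g^{-1})) = g \cdot \epsilon_{n,a,d} \cdot g^{-1}$ for $g \in \symm_n$, whereas the paper's shorthand ``$w \cdot \epsilon_{n,a,d} \cdot w^{-1}$'' (with $w$ the matching itself) is not literally correct, though the intended meaning and conclusion are the same.
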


\begin{proof}
    Lemma~\ref{lem:d-quotient-spanning} says that $\CC[\xxx_{n \times n}]_{\leq d}/(\II(\MMM_{n,a}) \cap \CC[\xxx_{n \times n}]_{\leq d})$ is spanned by matching monomials $\mmm(w)$ for matchings $w \in \MMM_n$ with exactly $d$ matched pairs. Regarding $\mmm(w)$ as a polynomial function $\MMM_{n,a} \to \CC$, we see that 
    \begin{equation}
        \mmm(w): u \mapsto \begin{cases} 1  & \text{if $w(i) \neq i$ implies $w(i) = u(i)$,} \\ 0 & \text{otherwise.} \end{cases}
    \end{equation}
    Therefore the element $\sum_{u \in \MMM_{n,a}} \mmm(w)(u) \cdot u \in \CC[\MMM_{n,a}]$ may be expressed in terms of $\epsilon_{n,a,d}$ as $\sum_{u \in \MMM_{n,a}} \mmm(w)(u) \cdot u = w \cdot \epsilon_{n,a,d} \cdot w^{-1}$ and the result follows.
\end{proof}

Lemma~\ref{lem:L-module-identification} gives rise to natural containments of $\symm_n$-modules
\begin{equation}
\label{eqn:filtration}
    \CC = L_{n,a,0} \subset L_{n,a,1} \subset L_{n,a,2} \subset \cdots \subset L_{n,a,(n-a)/2} = \CC[\MMM_{n,a}].
\end{equation}
Furthermore, Theorem~\ref{thm:matching-monomial-basis} implies that 
\begin{equation}
\label{eqn:identification}
    \CC[\MMM_{n,n-2d}] \cong R(\MMM_n)_d
\end{equation}
as $\symm_n$-modules under the association $w \mapsto \mmm(w)$ for $w \in \MMM_{n,n-2d}$. This identification gives the $\symm_n$-module surjection of Lemma~\ref{lem:surjection-lemma} the following combinatorial intepretation. For $1 \leq i \leq n-1$,  write $s_i := (i,i+1) \in \symm_n$ for the adjancent transposition interchanging $i$ and $i+1$.

\begin{lemma}
    \label{lem:locus-surjection}
    Under the identifications of Lemma~\ref{lem:L-module-identification} and the isomorphism \eqref{eqn:identification}, the $\symm_n$-module surjection of Lemma~\ref{lem:surjection-lemma} becomes the $\symm_n$-homomorphism
    \begin{equation}
        \varphi_{n,a,d}: \CC[\MMM_{n,n-2d}] \twoheadrightarrow L_{n,a,d}
    \end{equation}
    characterized by
    \begin{equation}
        \varphi_{n,a,d}: s_1 s_3 \cdots s_{2d-1} \mapsto \epsilon_{n,a,d}.
    \end{equation}
\end{lemma}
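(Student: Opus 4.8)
The plan is to trace through the chain of identifications that produced $\varphi_{n,a,d}$ and check where the element $s_1 s_3 \cdots s_{2d-1}$ goes. Recall from the proof of Lemma~\ref{lem:surjection-lemma} that $\varphi_{n,a,d}$ is the composite of three maps: the map $\mmm(w)\mapsto\mmm(w)$ from $R(\MMM_n)_d$ (with basis the matching monomials of matchings with exactly $d$ matched pairs, by Theorem~\ref{thm:matching-monomial-basis}) onto $\CC[\xxx_{n\times n}]_{\le d}/(\II(\MMM_{n,a})\cap\CC[\xxx_{n\times n}]_{\le d})$, followed by the orbit-harmonics isomorphism of Lemma~\ref{lem:refined-orbit-harmonics} with $R(\MMM_{n,a})_{\le d}$, followed by the identification of the latter with $L_{n,a,d}$ from Lemma~\ref{lem:L-module-identification}. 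Under \eqref{eqn:identification}, the involution $w_0 := (1,2)(3,4)\cdots(2d-1,2d) \in \MMM_{n,n-2d}$ corresponds to the matching monomial $\mmm(w_0) = x_{1,2}x_{3,4}\cdots x_{2d-1,2d} \in R(\MMM_n)_d$; and since $w_0$ as a permutation in $\symm_n$ is exactly the product of commuting adjacent transpositions $s_1 s_3 \cdots s_{2d-1}$, verifying the displayed formula amounts to computing the image of $\mmm(w_0)$ along the composite.

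First I would unwind the identification of Lemma~\ref{lem:L-module-identification}: its proof shows explicitly that the image of a matching monomial $\mmm(w)$ (for $w\in\MMM_n$ with $d$ matched pairs) in $L_{n,a,d}\subseteq\CC[\symm_n]$ is the group-algebra element $\sum_{u\in\MMM_{n,a}}\mmm(w)(u)\cdot u = w\cdot\epsilon_{n,a,d}\cdot w^{-1}$, obtained by viewing $\mmm(w)$ as the indicator-type function $u \mapsto \chi[\,w(i)\neq i \Rightarrow w(i)=u(i)\,]$ on $\MMM_{n,a}$ and using the multivariate Lagrange identification $\CC[\xxx_{n\times n}]/\II(\MMM_{n,a})\cong\CC[\MMM_{n,a}]$. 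Applying this to $w = w_0$ gives image $w_0\cdot\epsilon_{n,a,d}\cdot w_0^{-1}$. It therefore remains only to check that $w_0$ centralizes $\epsilon_{n,a,d}$, i.e.\ that $w_0\cdot\epsilon_{n,a,d}\cdot w_0^{-1} = \epsilon_{n,a,d}$. This is immediate from the definition $\epsilon_{n,a,d} = \sum_w w$ over involutions $w\in\MMM_{n,a}$ with $w(i)=i+1$ for $i=1,3,\dots,2d-1$: conjugation by $w_0 = (1,2)(3,4)\cdots(2d-1,2d)$ fixes each of $1,\dots,2d$ setwise in pairs $\{1,2\},\{3,4\},\dots$ and fixes $2d+1,\dots,n$ pointwise, so it preserves the defining condition on $w$ and merely permutes the summands; since each such $w$ already satisfies $w(2k-1)=2k$ for $k\le d$, one checks directly $w_0 w w_0^{-1}$ again has this property, so the sum is fixed. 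Hence the image of $\mmm(w_0) = \mmm(s_1 s_3\cdots s_{2d-1})$ is exactly $\epsilon_{n,a,d}$, as claimed.

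The one genuine point requiring care — and the likely main obstacle — is bookkeeping the direction of the identifications so that the claimed map is literally the one produced by Lemma~\ref{lem:surjection-lemma} rather than, say, its inverse transpose: one must confirm that under \eqref{eqn:identification} the generator of $R(\MMM_n)_d$ corresponding to $s_1 s_3\cdots s_{2d-1}\in\symm_n$ is the monomial $x_{1,2}x_{3,4}\cdots x_{2d-1,2d}$ (true by definition of $\mmm$, since the $2$-cycles of $s_1 s_3\cdots s_{2d-1}$ are $(1,2),(3,4),\dots,(2d-1,2d)$ and $\mmm$ takes the squarefree product of upper-triangular variables over $2$-cycles), and that $\varphi_{n,a,d}$ sends this basis element via $\mmm(w)\mapsto\mmm(w)$ into the quotient, where it is then identified with $w_0\cdot\epsilon_{n,a,d}\cdot w_0^{-1}$. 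Once the conjugation-invariance of $\epsilon_{n,a,d}$ under $w_0$ is observed, everything else is formal, and since both $\CC[\MMM_{n,n-2d}]$ and $L_{n,a,d}$ are cyclic $\symm_n$-modules generated (respectively) by $s_1 s_3\cdots s_{2d-1}$ (equivalently $w_0$, whose $\symm_n$-conjugates sweep out all of $\MMM_{n,n-2d}$) and by $\epsilon_{n,a,d}$, the value on this single generator determines $\varphi_{n,a,d}$ completely.
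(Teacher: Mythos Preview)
Your proposal is correct and follows essentially the same approach as the paper: trace the generator $w_0 = s_1 s_3 \cdots s_{2d-1}$ through the identifications, landing in $L_{n,a,d}$ via the evaluation of $\mmm(w_0)$ on $\MMM_{n,a}$. The paper is slightly more direct---it simply observes that $\mmm(w_0) = x_{1,2} x_{3,4} \cdots x_{2d-1,2d}$ evaluates to $1$ on the summands of $\epsilon_{n,a,d}$ and to $0$ elsewhere on $\MMM_{n,a}$, which immediately gives the image $\epsilon_{n,a,d}$---whereas you invoke the general formula $\mmm(w) \mapsto w \cdot \epsilon_{n,a,d} \cdot w^{-1}$ and then separately verify that $w_0$ centralizes $\epsilon_{n,a,d}$; but this is the same computation packaged differently.
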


\begin{proof}
    The matching monomial $\mmm(s_1 s_3 \cdots s_{2d-1}) = x_{1,2} x_{3,4} \cdots x_{2d-1,2d}$ evaluates to 1 on the summands of $\epsilon_{n,a,d}$ and evaluates to 0 on all other elements of $\MMM_{n,a,d}$.
\end{proof}

Since $\CC[\MMM_{n,n-2d}] = L_{n,n-2d,d}$, the filtration \eqref{eqn:filtration} allows us to consider the restriction of $\varphi_{n,a,d}$ to $L_{n,n-2d,k}$ for any $0 \leq k \leq d$. The image of this restriction is  as follows.

  \begin{lemma}\label{lem:induce-chain-surj}
      For all pairs of integers $0 \le k \le d\le (n-a)/2$, the $\symm_n$-module homomorphism $\varphi_{n,a,d}: \CC[\MMM_{n,n-2d}] \to L_{n,a,d}$ in Lemma~\ref{lem:locus-surjection} restricts to a surjective homomorphism between cyclic $\symm_n$-modules
      \begin{align*}
          L_{n,n-2d,k}&\twoheadrightarrow L_{n,a,k}\\
          \epsilon_{n,n-2d,k}&\mapsto c_k\epsilon_{n,a,k}
      \end{align*}
      for all $0\le k\le d$, where
      \[c_k\coloneqq\begin{cases}
          \prod_{i=k}^{d-1}\frac{n-2i-a}{n-2i-(n-2d)}, &\text{$k\le d-1$}\\
          1,&\text{$k=d$}.
      \end{cases}\]
  \end{lemma}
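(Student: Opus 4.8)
The plan is to reduce the statement to a single computation of $\varphi_{n,a,d}(\epsilon_{n,n-2d,k})$ and then propagate it by $\symm_n$-equivariance. The first step is to record an explicit combinatorial description of $\varphi_{n,a,d}$ on the natural basis of $\CC[\MMM_{n,n-2d}]$. By Lemma~\ref{lem:locus-surjection}, $\varphi_{n,a,d}$ is the $\symm_n$-equivariant map (for the conjugation action) sending $w_0 := s_1 s_3 \cdots s_{2d-1} = (1,2)(3,4)\cdots(2d-1,2d)$ to $\epsilon_{n,a,d} = \sum_{u \in \MMM_{n,a},\, w_0 \prec u} u$. Since $\MMM_{n,n-2d}$ is a single $\symm_n$-conjugacy class containing $w_0$, writing $w = g w_0 g^{-1}$ and using equivariance gives $\varphi_{n,a,d}(w) = g\,\epsilon_{n,a,d}\,g^{-1} = \sum_{u \in \MMM_{n,a},\, w_0 \prec u} g u g^{-1} = \sum_{u \in \MMM_{n,a},\, w \prec u} u$, where the last step is the bijection $u \mapsto g u g^{-1}$ between matchings extending $w_0$ and matchings extending $w$. (The right-hand side depends only on $w$, so this is independent of the choice of $g$; it also uses $d \le (n-a)/2$, which guarantees that such $u$ exist.)

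With this in hand, evaluate $\varphi_{n,a,d}$ on $\epsilon_{n,n-2d,k} = \sum_{w} w$, the sum over $w \in \MMM_{n,n-2d}$ containing the matched pairs $(1,2),(3,4),\dots,(2k-1,2k)$. Applying the formula above and interchanging the order of summation,
\[
\varphi_{n,a,d}(\epsilon_{n,n-2d,k}) = \sum_{u \in \MMM_{n,a}} \big| \{ w \in \MMM_{n,n-2d} : (1,2)\cdots(2k-1,2k) \prec w \prec u \} \big| \cdot u .
\]
The inner count vanishes unless $u$ itself contains $(1,2),\dots,(2k-1,2k)$, and when it does, specifying such a $w$ is the same as choosing which $d-k$ of the remaining $(n-a)/2 - k$ matched pairs of $u$ to retain; hence the count equals $\binom{(n-a)/2-k}{d-k}$, independently of $u$. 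Since the set of $u \in \MMM_{n,a}$ containing $(1,2),\dots,(2k-1,2k)$ is exactly the support of $\epsilon_{n,a,k}$, this yields $\varphi_{n,a,d}(\epsilon_{n,n-2d,k}) = \binom{(n-a)/2-k}{d-k}\,\epsilon_{n,a,k}$. A direct rewriting of this binomial coefficient as a product of ratios — dividing the $d-k$ numerators $n-2i-a$ (for $k \le i \le d-1$) and the $d-k$ denominators $2d-2i = n-2i-(n-2d)$ each by $2$, so that the falling factorial and $(d-k)!$ reappear — identifies it with $c_k$ as defined in the statement, the empty-product convention handling $k = d$.

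To finish, observe that $L_{n,n-2d,k}$ is by definition the cyclic $\symm_n$-submodule of $\CC[\symm_n]$ generated by $\epsilon_{n,n-2d,k}$, so $\varphi_{n,a,d}$ carries it onto the cyclic submodule generated by $\varphi_{n,a,d}(\epsilon_{n,n-2d,k}) = c_k\,\epsilon_{n,a,k}$. Because $0 \le k \le d \le (n-a)/2$, the integer $c_k = \binom{(n-a)/2-k}{d-k}$ is strictly positive, so this cyclic submodule equals $L_{n,a,k}$; thus $\varphi_{n,a,d}$ restricts to a surjection $L_{n,n-2d,k} \twoheadrightarrow L_{n,a,k}$ with $\epsilon_{n,n-2d,k} \mapsto c_k \epsilon_{n,a,k}$, as claimed. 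I expect the only delicate point to be the first step: justifying that $\varphi_{n,a,d}$, a priori only known abstractly through Lemmas~\ref{lem:surjection-lemma}, \ref{lem:L-module-identification}, and \ref{lem:locus-surjection}, really is the ``extend the matching in all possible ways'' map and is well defined on all of $\CC[\MMM_{n,n-2d}]$. Once that description is secured, the remaining steps — the double count and the binomial-coefficient manipulation — are routine.
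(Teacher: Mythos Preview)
Your proof is correct. Both you and the paper reduce to showing $\varphi_{n,a,d}(\epsilon_{n,n-2d,k}) = c_k\,\epsilon_{n,a,k}$ and then invoke cyclicity and $c_k \neq 0$ for surjectivity. The difference is in how that key identity is established. The paper argues by downward induction on $k$: the case $k=d$ is Lemma~\ref{lem:locus-surjection}, and the inductive step uses the recursion from Equation~\eqref{eq:ungrad-map} (expressing a generator at level $t-1$ as a scalar multiple of a sum of conjugates of level-$t$ generators) together with equivariance to pass from $c_t$ to $c_{t-1}$. You instead give a direct double-counting argument: after writing down the explicit formula $\varphi_{n,a,d}(w) = \sum_{u \in \MMM_{n,a},\, w \prec u} u$ for every $w \in \MMM_{n,n-2d}$, you sum over all $w$ in the support of $\epsilon_{n,n-2d,k}$, interchange summations, and read off the coefficient $\binom{(n-a)/2-k}{d-k} = c_k$ in one stroke. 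Your route is slightly more self-contained and avoids the induction, at the cost of first unpacking the map $\varphi_{n,a,d}$ pointwise; the paper's route reuses machinery already set up in the proof of Lemma~\ref{lem:d-quotient-spanning}, which makes the recursive constant visible but obscures the binomial interpretation.
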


  \begin{proof}
      Since $c_k \neq 0$ and $\epsilon_{n,n-2d,k}, \epsilon_{n,a,k}$ are generators of the cyclic $\symm_n$-modules $L_{n,n-2d,k}, L_{n,a,k}$ (respectively), surjectivity will follow if we can prove $\varphi_{n,a,d}: \epsilon_{n,n-2d,k} \mapsto c_k \epsilon_{n,a,k}$. We prove this fact by downward induction on $k$.
    For $k=d$, since $L_{n,n-2d,k}=L_{n,n-2d,d}=\CC[\MMM_{n,n-2d}]$ and $\epsilon_{n,n-2d,d}=s_1s_3\cdots s_{2d-1}$, we have that $\epsilon_{n,n-2d,k}\mapsto c_k\epsilon_{n,a,k}$ by Lemma~\ref{lem:locus-surjection}. 
    
    Assume $\varphi_{n,a,d}: \epsilon_{n,n-2d,k}\mapsto c_k\epsilon_{n,a,k}$  for $k=t$ where $0<t\le d$. 
    We prove  $\varphi_{n,a,d}: \epsilon_{n,n-2d,t-1}\mapsto c_{t-1}\epsilon_{n,a,t-1}$. In fact, we have 
      \begin{equation}\epsilon_{n,n-2d,t-1} = \frac{2}{n-2(t-1)-(n-2d)}\cdot \epsilon_{n,n-2d,t}\end{equation}
      and
      \begin{equation}\epsilon_{n,a,t-1} = \frac{2}{n-2(t-1)-a}\cdot \epsilon_{n,a,t}\end{equation}
      by Equations~\eqref{eq:identify-ungrad} and \eqref{eq:ungrad-map}. By induction, we have
      \begin{align}
      \epsilon_{n,n-2d,t-1} \mapsto &\frac{2}{n-2(t-1)-(n-2d)} \cdot c_t\epsilon_{n,a,t} \\ &= \frac{2}{n-2(t-1)-(n-2d)} \cdot c_t \cdot \frac{n-2(t-1)-a}{2} \cdot \epsilon_{n,a,t-1} \\
      &= c_{t-1}\epsilon_{n,a,t-1}
      \end{align}
      and the proof is complete.
  \end{proof}

  We use the surjections of Lemma~\ref{lem:induce-chain-surj} in the special case $d = \frac{n-a}{2} - 1$. These surjections fit into a commutative diagram of ungraded $\symm_n$-modules as follows.

  \begin{lemma}\label{lem:a-fix-diagram}
      Suppose $2 \mid (n-a)$ and write $\delta := n \mod 2 \in \{0,1\}$. We have the following commutative diagram of ungraded $\symm_n$-modules where the vertical lines are northward containments and the horizontal lines are eastward surjections.
      \begin{scriptsize}
      \begin{displaymath}\displaystyle \scalebox{0.93}{
        \xymatrix{
        &&&&&R(\MMM_n)_{\frac{n-\delta}{2}}\\
        &&&&R_n({\MMM}_n)_{\frac{n-\delta-2}{2}} &R_n(\MMM_{n,\delta})_{\le\frac{n-\delta}{2}}\ar@{=}[u]\\
        &&&R(\MMM_n)_{\frac{n-\delta-4}{2}} &R(\MMM_{n,\delta+2})_{\le\frac{n-\delta-2}{2}}\ar@{=}[u]\ar@{->>}[r] &R(\MMM_{n,\delta})_{\le\frac{n-\delta-2}{2}}\ar@{-}[u]\\
        &&\begin{sideways}$\ddots$\end{sideways}\quad  & R(\MMM_{n,\delta+4})_{\le\frac{n-\delta-4}{2}}
        \ar@{=}[u]\ar@{->>}[r] &R(\MMM_{n,\delta+2})_{\le\frac{n-\delta-4}{2}}\ar@{-}[u]\ar@{->>}[r] &R(\MMM_{n,\delta})_{\le\frac{n-\delta-4}{2}}\ar@{-}[u]\\
        &R(\MMM_n)_1 &\vdots &\vdots\ar@{-}[u] &\vdots\ar@{-}[u] &\vdots\ar@{-}[u]\\
        R(\MMM_n)_0 &R(\MMM_{n,n-2})_{\le 1}\ar@{=}[u]\ar@{->>}[r] &\cdots\ar@{->>}[r] &R(\MMM_{n,\delta+4})_{\le 1}\ar@{-}[u]\ar@{->>}[r] &R(\MMM_{n,\delta+2})_{\le 1}\ar@{-}[u]\ar@{->>}[r] &R(\MMM_{n,\delta})_{\le 1}\ar@{-}[u]\\
        R(\MMM_{n,n})_{\le 0}\ar@{=}[u]\ar@{->>}[r] &R(\MMM_{n,n-2})_{\le 0}\ar@{-}[u]\ar@{->>}[r] &\cdots\ar@{->>}[r] &R(\MMM_{n,\delta+4})_{\le 0}\ar@{-}[u]\ar@{->>}[r] &R(\MMM_{n,\delta+2})_{\le 0}\ar@{-}[u]\ar@{->>}[r] &R(\MMM_{n,\delta})_{\le 0}\ar@{-}[u]\\
        \CC\ar@{=}[u] &\CC\ar@{=}[u] &\cdots &\CC\ar@{=}[u] &\CC\ar@{=}[u] &\CC\ar@{=}[u]
        }}
      \end{displaymath}
      \end{scriptsize}
  \end{lemma}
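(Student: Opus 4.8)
The plan is to replace every object and arrow of the diagram by a structure already constructed in Lemmas~\ref{lem:R-degree-bound}--\ref{lem:induce-chain-surj}, so that commutativity of each elementary square reduces to the formal statement that one $\symm_n$-equivariant map is being restricted to nested submodules. First I would unwind the identifications. Theorem~\ref{thm:matching-monomial-basis}, in the form \eqref{eqn:identification}, gives $R(\MMM_n)_d \cong \CC[\MMM_{n,n-2d}] = L_{n,n-2d,d}$ via $\mmm(w) \leftrightarrow w$; Lemma~\ref{lem:L-module-identification} gives $R(\MMM_{n,a})_{\le d} \cong L_{n,a,d}$ for $0 \le d \le (n-a)/2$; and the degree bound of Lemma~\ref{lem:R-degree-bound} gives $R(\MMM_{n,a})_{\le d} = R(\MMM_{n,a})$ once $d \ge (n-a)/2$, so that $R(\MMM_{n,b})_{\le (n-b)/2} = L_{n,b,(n-b)/2} = \CC[\MMM_{n,b}] \cong R(\MMM_n)_{(n-b)/2}$. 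These last equalities are exactly what the ``$=$'' arrows at the top of each column record, while the ``$=$'' arrows along the bottom are the triviality $R(\MMM_{n,b})_{\le 0} = R(\MMM_{n,b})_0 = \CC = L_{n,b,0}$. Under these identifications the northward containment $R(\MMM_{n,b})_{\le k} \subseteq R(\MMM_{n,b})_{\le k+1}$ becomes the submodule inclusion $L_{n,b,k} \subseteq L_{n,b,k+1}$ from the filtration \eqref{eqn:filtration}, and the eastward surjection from column $b+2$ to column $b$ at level $k$ becomes the restriction to $L_{n,b+2,k}$ of the map $\varphi_{n,b,D}$ with $D := \tfrac{n-b}{2}-1$, chosen so that $n-2D = b+2$ and hence $\CC[\MMM_{n,b+2}]$ is the domain of $\varphi_{n,b,D}$. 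Lemma~\ref{lem:induce-chain-surj} asserts precisely that $\varphi_{n,b,D}$ carries $L_{n,b+2,k}$ onto $L_{n,b,k}$ for every $0 \le k \le D$; and at the leftmost position of each row, where $b+2 = n-2k$, this restriction is $\varphi_{n,b,k}$ on its whole domain $\CC[\MMM_{n,n-2k}] \cong R(\MMM_n)_k$, which is the surjection of Lemmas~\ref{lem:surjection-lemma} and \ref{lem:locus-surjection}.

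With the arrows so identified, commutativity is immediate. An interior square has horizontal edges $\varphi_{n,b,D}\colon L_{n,b+2,k}\to L_{n,b,k}$ and $\varphi_{n,b,D}\colon L_{n,b+2,k+1}\to L_{n,b,k+1}$, and vertical edges the filtration inclusions; since both horizontal edges are the restriction of the \emph{same} $\symm_n$-map $\varphi_{n,b,D}$, and Lemma~\ref{lem:induce-chain-surj} guarantees $\varphi_{n,b,D}(L_{n,b+2,k})=L_{n,b,k}$ and $\varphi_{n,b,D}(L_{n,b+2,k+1})=L_{n,b,k+1}$, both ways around the square equal $\varphi_{n,b,D}$ restricted to $L_{n,b+2,k}$. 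A square one of whose edges is a ``$=$'' arrow is an honest equality of submodules (or the fixed isomorphism \eqref{eqn:identification}), and so collapses to an interior square. In particular the composite of surjections across a fixed row,
\[
R(\MMM_n)_k = L_{n,n-2k,k} \twoheadrightarrow L_{n,n-2k-2,k} \twoheadrightarrow \cdots \twoheadrightarrow L_{n,\delta,k} = R(\MMM_{n,\delta})_{\le k},
\]
is well defined because each factor surjects one filtration piece onto the next.

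I expect the only real work to be organizational rather than mathematical: keeping the substitution $b \leftrightarrow n-2k$ straight (it is this that makes the left ``diagonal'' of the modules $R(\MMM_n)_k$ coincide with the tops of the columns), pinning down the value $D = \tfrac{n-b}{2}-1$ that governs the horizontal map into each column $b$, tracking the parity parameter $\delta = n \bmod 2$, and checking that the degree bound of Lemma~\ref{lem:R-degree-bound} forces every ``$=$'' arrow so that no column over-runs its stated top. All of the representation-theoretic substance — the existence of the surjections, their compatibility with the filtration, and the explicit scalars $c_k$ — has already been supplied by Lemma~\ref{lem:induce-chain-surj}, so no further computation is needed.
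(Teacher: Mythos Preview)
Your proposal is correct and follows essentially the same approach as the paper's own proof: both identify the top ``$=$'' arrows via \eqref{eqn:identification} and Lemma~\ref{lem:R-degree-bound}, the vertical containments via the filtration \eqref{eqn:filtration} through Lemma~\ref{lem:L-module-identification}, and the horizontal surjections as restrictions of $\varphi_{n,b,D}$ with $D=\tfrac{n-b}{2}-1$ from Lemma~\ref{lem:induce-chain-surj}. You are in fact more explicit than the paper about why the interior squares commute (both horizontal edges being restrictions of one and the same $\varphi_{n,b,D}$), a point the paper leaves implicit.
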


  \begin{proof}
      The vertical isomorphisms at the top of the diagram arise from the identification
      \begin{equation}
          R(\MMM_n)_d \cong \CC[\MMM_{n,n-2d}] \cong R(\MMM_{n,n-2d})_{\leq d}
      \end{equation}
      where the first isomorphism is \eqref{eqn:identification} and the second isomorphism follows from Lemma~\ref{lem:R-degree-bound}. The horizontal surjections are from the compositions
      \begin{equation}
          R(\MMM_{n,a})_{\leq d} \cong L_{n,a,d} \twoheadrightarrow L_{n,a-2,d} \cong R(\MMM_{n,a-2})_{\leq d}
      \end{equation}
      where the isomorphisms are those of Lemma~\ref{lem:L-module-identification} and the surjection is that of Lemma~\ref{lem:induce-chain-surj}.
  \end{proof}

The modules appearing in Lemma~\ref{lem:a-fix-diagram} are `bounded above' parts $R(\MMM_{n,a})_{\leq d}$ of the graded $\symm_n$-module $R(\MMM_{n,a})$. It will be more useful for us to have a corresponding result on the graded pieces $R(\MMM_{n,a})_d$ themselves. Fortunately, such a result is easily obtained from the commutativity of the diagram in Lemma~\ref{lem:a-fix-diagram}. The next proof uses the identification
\begin{equation}
   R(\MMM_{n,a})_d = R(\MMM_{n,a})_{\leq d}/R(\MMM_{n,a})_{\leq d-1}
\end{equation}
of $\symm_n$-modules.

\begin{lemma}
\label{lem:chain-of-surjections}
In the situation of Lemma~\ref{lem:a-fix-diagram}, we have chains of $\symm_n$-module surjections
    \label{lem:triangular-surjections}
     \begin{scriptsize}
      \begin{displaymath} \displaystyle \scalebox{1}{
        \xymatrix{
        &&&&&R_n(\MMM_{n,\delta})_{\frac{n-\delta}{2}}\\
        &&&&R(\MMM_{n,\delta+2})_{\frac{n-\delta-2}{2}}\ar@{->>}[r] &R(\MMM_{n,\delta})_{\frac{n-\delta-2}{2}}\\
        && & R(\MMM_{n,\delta+4})_{\frac{n-\delta-4}{2}}
        \ar@{->>}[r] &R(\MMM_{n,\delta+2})_{\frac{n-\delta-4}{2}}\ar@{->>}[r] &R(\MMM_{n,\delta})_{\frac{n-\delta-4}{2}}\\
        & &\begin{sideways}$\ddots$\end{sideways} &\vdots &\vdots &\vdots \\
         &R(\MMM_{n,n-2})_{1}\ar@{->>}[r] &\cdots\ar@{->>}[r] &R(\MMM_{n,\delta+4})_{1}\ar@{->>}[r] &R(\MMM_{n,\delta+2})_{ 1}\ar@{->>}[r] &R(\MMM_{n,\delta})_{1}\\
        R(\MMM_{n,n})_{0}\ar@{->>}[r] &R(\MMM_{n,n-2})_{0}\ar@{->>}[r] &\cdots\ar@{->>}[r] &R(\MMM_{n,\delta+4})_{0}\ar@{->>}[r] &R(\MMM_{n,\delta+2})_{0}\ar@{->>}[r] &R(\MMM_{n,\delta})_{0}
        }}
      \end{displaymath}
      \end{scriptsize}
\end{lemma}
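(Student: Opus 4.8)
The plan is to obtain the claimed diagram by passing to associated graded pieces, one column at a time, inside the commutative diagram of Lemma~\ref{lem:a-fix-diagram}. Recall that for each $a$ with $2 \mid (n-a)$ the $a$-th column of that diagram is the filtration $\CC = R(\MMM_{n,a})_{\leq 0} \subseteq R(\MMM_{n,a})_{\leq 1} \subseteq \cdots \subseteq R(\MMM_{n,a})_{\leq (n-a)/2}$, and that $R(\MMM_{n,a})_d = R(\MMM_{n,a})_{\leq d}/R(\MMM_{n,a})_{\leq d-1}$ by definition; the left edge of the diagram uses the identification $R(\MMM_n)_d \cong R(\MMM_{n,n-2d})_{\leq d}$ of \eqref{eqn:identification} together with Lemma~\ref{lem:R-degree-bound}, whose top graded piece is $R(\MMM_{n,n-2d})_d$. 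So the desired diagram should literally be the diagram of Lemma~\ref{lem:a-fix-diagram} with every column replaced by its sequence of consecutive subquotients.

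The key local step I would carry out is the following. For each admissible pair $(a,d)$, extract from Lemma~\ref{lem:a-fix-diagram} the commutative square of $\symm_n$-modules
\[
\begin{CD}
R(\MMM_{n,a})_{\leq d-1} @>>> R(\MMM_{n,a-2})_{\leq d-1}\\
@VVV @VVV\\
R(\MMM_{n,a})_{\leq d} @>>> R(\MMM_{n,a-2})_{\leq d}
\end{CD}
\]
in which the vertical arrows are the filtration inclusions and the horizontal arrows are the surjections of that lemma (which, through Lemmas~\ref{lem:L-module-identification} and~\ref{lem:induce-chain-surj}, are induced by $\epsilon_{n,a,\bullet}\mapsto c_\bullet\,\epsilon_{n,a-2,\bullet}$). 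Commutativity of this square says precisely that the bottom horizontal map carries $R(\MMM_{n,a})_{\leq d-1}$ into $R(\MMM_{n,a-2})_{\leq d-1}$ and restricts there to the top horizontal map, so it descends to a well-defined $\symm_n$-equivariant map on the cokernels of the two vertical inclusions, namely $R(\MMM_{n,a})_d \to R(\MMM_{n,a-2})_d$. Since the bottom map is onto, a one-line diagram chase (lift a class of $R(\MMM_{n,a-2})_d$ to $R(\MMM_{n,a-2})_{\leq d}$, pull it back along the surjection, project to $R(\MMM_{n,a})_d$) shows the induced map on graded pieces is again a surjection.

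It then remains to check that these induced maps fit together into the asserted staircase. This is immediate from functoriality: every four-term square of the diagram of Lemma~\ref{lem:a-fix-diagram} built out of the modules $R(\MMM_{n,\bullet})_{\leq\bullet}$ commutes, hence so do the induced maps on associated graded pieces; and after matching indices (using the top-graded-piece identification at the left edge), row $d$ of the resulting diagram is exactly the chain $R(\MMM_{n,n-2d})_d \twoheadrightarrow R(\MMM_{n,n-2d-2})_d \twoheadrightarrow \cdots \twoheadrightarrow R(\MMM_{n,\delta})_d$ of the statement. I do not expect a genuine obstacle: the argument is purely formal once the displayed square is available, and that square is part of the content of Lemma~\ref{lem:a-fix-diagram}. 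The only steps needing a moment's attention are the compatibility of the level-$d$ horizontal surjection with the level-$(d-1)$ one on the subobject $R(\MMM_{n,a})_{\leq d-1}$ — i.e.\ that ``associated graded of a surjection of filtered modules is a surjection'' applies uniformly — and the bookkeeping of the index ranges of $(a,d)$ so that the new diagram has precisely the shape drawn in the statement.
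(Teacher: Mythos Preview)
Your proposal is correct and follows essentially the same approach as the paper: both extract from the commutative diagram of Lemma~\ref{lem:a-fix-diagram} the fact that each horizontal surjection $R(\MMM_{n,a})_{\leq d} \twoheadrightarrow R(\MMM_{n,a-2})_{\leq d}$ carries the submodule $R(\MMM_{n,a})_{\leq d-1}$ into $R(\MMM_{n,a-2})_{\leq d-1}$, and then invoke the elementary fact that a surjection $\psi: V_1 \twoheadrightarrow V_2$ with $\psi(W_1)\subseteq W_2$ induces a surjection $V_1/W_1 \twoheadrightarrow V_2/W_2$. The paper states this abstract fact in one sentence and applies it directly, while you spell out the same reasoning with the explicit square and diagram chase; the content is the same.
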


\begin{proof}
    Let $G$ be a group, let $V_1, V_2$ be $\CC[G]$-modules, and let $W_i \subseteq V_i$ be a submodule for $i = 1,2$. Suppose there exists a surjective module homomorphism $\psi: V_1 \twoheadrightarrow V_2$ such that $\psi(W_1) \subseteq W_2$. Then the induced map $\overline{\psi}: V_1/W_1 \to V_2/W_2$ is a surjection. Applying this fact to Lemma~\ref{lem:a-fix-diagram} gives the result.
\end{proof}

\subsection{The graded character of $R(\MMM_{n,a})$} 
Our next lemmas describe the triangle of Lemma~\ref{lem:chain-of-surjections} in more detail. As an ungraded $\symm_n$-module, the direct sum over the column $R(\MMM_{n,a})_*$ of this triangle is simply $\CC[\MMM_{n,a}]$.

\begin{lemma}
    \label{lem:column-sums}
    If $a \equiv n \mod 2$, the direct sum of the modules in the column $R(\MMM_{n,a})_*$ of the triangle in Lemma~\ref{lem:chain-of-surjections} has ungraded Frobenius image
    \begin{equation}
        \sum_{d \, = \, 0}^{\frac{n-a}{2}} \Frob  \left( R(\MMM_{n,a})_d \right) = \Frob \left(  \bigoplus_{d \, = \, 0}^{\frac{n-a}{2}} R(\MMM_{n,a})_d \right) = \Frob( R(\MMM_{n,a} )) = s_{(n-a)/2}[s_2] \cdot s_a
    \end{equation}
\end{lemma}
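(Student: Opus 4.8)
The plan is to recognize the column sum as the whole module $R(\MMM_{n,a})$ and then evaluate its (ungraded) Frobenius image via orbit harmonics. First I would invoke Lemma~\ref{lem:R-degree-bound}, which gives $R(\MMM_{n,a})_d = 0$ for $d > (n-a)/2$; consequently
\[
\bigoplus_{d \, = \, 0}^{\frac{n-a}{2}} R(\MMM_{n,a})_d = \bigoplus_{d \, \geq \, 0} R(\MMM_{n,a})_d = R(\MMM_{n,a})
\]
as graded $\symm_n$-modules, and in particular as ungraded $\symm_n$-modules. Since $\Frob$ is additive over direct sums and forgets the grading, this yields the first two claimed equalities $\sum_d \Frob(R(\MMM_{n,a})_d) = \Frob(\bigoplus_d R(\MMM_{n,a})_d) = \Frob(R(\MMM_{n,a}))$.

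It then remains to identify $\Frob(R(\MMM_{n,a}))$ with $s_{(n-a)/2}[s_2] \cdot s_a$. By the orbit harmonics isomorphism \eqref{eq:orbit-harmonics-isomorphisms} applied to the $\symm_n$-stable locus $\MMM_{n,a} \subseteq \Mat_{n \times n}(\CC)$, we have an isomorphism of ungraded $\symm_n$-modules $R(\MMM_{n,a}) \cong \CC[\MMM_{n,a}]$, so $\Frob(R(\MMM_{n,a})) = \Frob(\CC[\MMM_{n,a}])$. Now $\CC[\MMM_{n,a}]$ is precisely the permutation module $V_{n,(n-a)/2}$ appearing in the proof of Theorem~\ref{thm:matching-frobenius} — the span of matchings on $[n]$ with exactly $(n-a)/2$ matched pairs — and there it was shown, via the induction-product decomposition $V_{n,(n-a)/2} \cong V_{n-a,(n-a)/2} \circ \one_{\symm_a}$ together with $\Frob(V_{n-a,(n-a)/2}) = s_{(n-a)/2}[s_2]$ (Lemma~\ref{lem:plethysm-interpretation}) and the correspondence between induction product and multiplication of symmetric functions, that $\Frob(V_{n,(n-a)/2}) = s_{(n-a)/2}[s_2] \cdot s_a$. (Equivalently, $\MMM_{n,a}$ is a single conjugation orbit with point stabilizer $\symm_2 \wr \symm_{(n-a)/2} \times \symm_a$, and the same two facts evaluate the induced trivial module.) Combining displays completes the proof.

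There is no real obstacle here: the content is entirely a bookkeeping assembly of Lemma~\ref{lem:R-degree-bound}, the orbit harmonics isomorphism, and the Frobenius computation already carried out for $R(\MMM_n)$ in Section~\ref{sec:Matching}. The only point requiring a moment's care is the appeal to Lemma~\ref{lem:R-degree-bound} to be sure the column of the triangle in Lemma~\ref{lem:chain-of-surjections} exhausts all nonzero graded pieces of $R(\MMM_{n,a})$, so that no degree is missed in the sum.
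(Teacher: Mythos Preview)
Your proposal is correct and follows essentially the same route as the paper: invoke Lemma~\ref{lem:R-degree-bound} for the degree bound, then use the orbit harmonics isomorphism $R(\MMM_{n,a}) \cong \CC[\MMM_{n,a}]$ together with the Frobenius computation from the proof of Theorem~\ref{thm:matching-frobenius}. One tiny notational slip in your parenthetical: in the paper's convention the point stabilizer is $\symm_{(n-a)/2} \wr \symm_2 \times \symm_a$, not $\symm_2 \wr \symm_{(n-a)/2} \times \symm_a$; this does not affect your main argument.
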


\begin{proof}
    The first equality is clear. By Lemma~\ref{lem:R-degree-bound} we have $R(\MMM_{n,a})_d = 0$ for $d > (n-a)/2$, so the second equality holds. Orbit harmonics gives an isomorphism of ungraded $\symm_n$-modules
    $R(\MMM_{n,a}) \cong \CC[\MMM_{n,a}]$. As in the proof of Theorem~\ref{thm:matching-frobenius}, we have
    \begin{equation}
    \label{eqn:induction-product-identification}
        \Frob(\CC[\MMM_{n,a}]) = s_{(n-a)/2}[s_2] \cdot s_a,
    \end{equation}
    so the third equality holds.
\end{proof}

The proof of Lemma~\ref{lem:column-sums} only used the degree bound of Lemma~\ref{lem:R-degree-bound}, not the surjective maps of Lemma~\ref{lem:chain-of-surjections}. Our next lemma compares the Frobenius images of various entries of the triangle of Lemma~\ref{lem:chain-of-surjections}. Proving this lemma will use the surjectivity result.

\begin{lemma}
    \label{lem:surjection-chain-inequalities}
    Suppose $2 \mid (n-a)$ and $0 \leq d \leq (n-a)/2$. 
    \begin{enumerate}
    \item If $\frac{n-a}{2} \equiv d \mod 2$ we have the inequality of symmetric functions
    \begin{equation*}
        \Frob (R(\MMM_{n,a})_d) \leq \left\{ \Frob( R(\MMM_{n,a'})_d ) \right\}_{\lambda_1 \leq n - 2d + a}
    \end{equation*}
    where 
    $$a' := \frac{n+a}{2} - d.$$
    \item If $\frac{n-a}{2} \not\equiv d \mod 2$ we have the inequality of symmetric functions
    \begin{equation*}
        \Frob (R(\MMM_{n,a})_d) \leq \left\{ \Frob( R(\MMM_{n,a'+1})_d ) \right\}_{\lambda_1 \leq n - 2d + a}.
    \end{equation*}
    \end{enumerate}
\end{lemma}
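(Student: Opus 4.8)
The plan is to read off the desired inequalities from the triangle of $\symm_n$-module surjections in Lemma~\ref{lem:chain-of-surjections}, combined with the $\eta$-annihilation bound of Lemma~\ref{lem:conjugacy-module-annihilation}. The two elementary facts I would use are: (i) a surjection $V \twoheadrightarrow W$ of $\symm_n$-modules forces $\Frob(W) \leq \Frob(V)$, since semisimplicity of $\CC[\symm_n]$ splits the surjection and $\Frob(V) - \Frob(W)$ is the Frobenius image of the kernel, hence Schur-positive; and (ii) each truncation operator $\{-\}_{\lambda_1 \leq m}$ is monotone for the order $\leq$, because the truncation of a Schur-positive symmetric function is again Schur-positive.

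Next I would identify the correct source module. In case (1), where $\frac{n-a}{2} \equiv d \bmod 2$, set $a' := \frac{n+a}{2} - d$; in case (2), where $\frac{n-a}{2} \not\equiv d \bmod 2$, work with $a'+1 = \frac{n+a}{2} - d + 1$. The key bookkeeping check --- and really the only non-formal point --- is that this number is a legitimate fixed-point count occurring in row $d$ of the triangle of Lemma~\ref{lem:chain-of-surjections}, weakly to the left of the column indexed by $a$. Concretely one checks it is congruent to $n \bmod 2$ (this is exactly where the parity hypotheses enter: $\frac{n-a'}{2}$, resp.\ $\frac{n-(a'+1)}{2}$, is an integer precisely under the stated congruence), that it is $\geq a$, and that it is $\leq n - 2d$; the last two use $d \leq \frac{n-a}{2}$, where $d < \frac{n-a}{2}$ holds automatically in case (2) since $d = \frac{n-a}{2}$ would place us in case (1). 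Since in row $d$ the surjections of Lemma~\ref{lem:chain-of-surjections} connect $R(\MMM_{n,b})_d$ to $R(\MMM_{n,b-2})_d$ for all valid $b$, composing them yields a surjection $R(\MMM_{n,a'})_d \twoheadrightarrow R(\MMM_{n,a})_d$ (resp.\ from $R(\MMM_{n,a'+1})_d$), so that by (i) we get $\Frob(R(\MMM_{n,a})_d) \leq \Frob(R(\MMM_{n,a'})_d)$ (resp.\ with $a'+1$).

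Finally I would invoke Lemma~\ref{lem:conjugacy-module-annihilation}: every irreducible $V^\lambda$ appearing in $R(\MMM_{n,a})_d$ satisfies $\lambda_1 \leq n - 2d + a$, so $\Frob(R(\MMM_{n,a})_d)$ is fixed by the truncation $\{-\}_{\lambda_1 \leq n - 2d + a}$. Applying this truncation to both sides of the Frobenius inequality from the previous paragraph and using monotonicity (ii) then gives
\[
\Frob(R(\MMM_{n,a})_d) = \{\Frob(R(\MMM_{n,a})_d)\}_{\lambda_1 \leq n - 2d + a} \leq \{\Frob(R(\MMM_{n,a'})_d)\}_{\lambda_1 \leq n - 2d + a}
\]
in case (1), and the analogous statement with $a'+1$ in case (2), which are precisely the claimed inequalities. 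The main (modest) obstacle is thus purely the arithmetic verification that the chosen $a'$ or $a'+1$ lands in the triangle; the representation-theoretic content is entirely carried by Lemmas~\ref{lem:chain-of-surjections} and \ref{lem:conjugacy-module-annihilation}.
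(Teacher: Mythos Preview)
Your proposal is correct and follows essentially the same route as the paper: use the row surjections of Lemma~\ref{lem:chain-of-surjections} to get $\Frob(R(\MMM_{n,a})_d) \leq \Frob(R(\MMM_{n,a'})_d)$ (resp.\ with $a'+1$), then combine with the $\eta$-annihilation bound of Lemma~\ref{lem:conjugacy-module-annihilation}. In fact your version is slightly cleaner than the paper's write-up: you correctly apply Lemma~\ref{lem:conjugacy-module-annihilation} to $R(\MMM_{n,a})_d$ and then use monotonicity of the truncation, whereas the paper's printed proof asserts the bound $\lambda_1 \leq n-2d+a$ directly for $R(\MMM_{n,a'})_d$, which does not follow from that lemma (e.g.\ $n=6$, $a=0$, $d=1$, $a'=2$ gives $s_{51}+s_{42}$) --- your formulation is the intended argument.
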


\begin{proof}
    We start with the proof of (1). The parity assumption $(n-a)/2 \equiv d \mod 2$ means that $R(\MMM_{n,a'})_d$ is weakly west of (and in the same row as) $R(\MMM_{n,a})_d$ in the triangle of Lemma~\ref{lem:chain-of-surjections}. Lemma~\ref{lem:chain-of-surjections} applies to show that
    \begin{equation}
        \Frob (R(\MMM_{n,a})_d) \leq \Frob( R(\MMM_{n,a'})_d ).
    \end{equation}
    By Lemma~\ref{lem:conjugacy-module-annihilation} we have
    \begin{equation}
        \Frob( R(\MMM_{n,a'})_d ) = \{ \Frob( R(\MMM_{n,a'})_d ) \}_{\lambda_1 \leq n - 2d + a}.
    \end{equation}
    The desired inequality follows.

    For (2), the parity assumption $(n-a)/2 \not\equiv d \mod 2$ implies that $R(\MMM_{n,a'+1})_d$ is weakly west of (and in the same row as) $R(\MMM_{n,a})_d$. The desired inequality is derived in the same way as in (1).
\end{proof}

A picture should help in understanding Lemma~\ref{lem:surjection-chain-inequalities} and its proof. Replacing modules with dots, a southwest trapezoidal portion of the triangle in Lemma~\ref{lem:chain-of-surjections} looks as follows, where the horizontal surjections are suppressed.  
\begin{scriptsize}
\begin{center}
    \begin{tikzpicture}[scale = 0.2]

    \node (n,0) at (0,0) {$\circ$};

    \node (n-2,0) at (2,0) {$\circ$};
    \node (n-2,1) at (2,2) {$\circ$};

    \node (dots) at (6,0) {$\dots$};
    \node (dots) at (6,6) {\begin{sideways} $\ddots$ \end{sideways}};

    \node at (10,0) {$\circ$};
    \node at (10,2) {$\circ$};
    \node at (10,4) {$\circ$};
    \node at (10,6) {$\circ$};
    \node at (10,8.5) {$\vdots$};
    \node at (10,10){$\circ$};

    \node at (12,0) {$\circ$};
    \node at (12,2) {$\circ$};
    \node at (12,4) {$\star$};
    \node at (12,6) {$\circ$};
    \node at (12,8.5) {$\vdots$};
    \node at (12,10){$\circ$};
    \node at (12,12){$\blacktriangle$};

    \node at (15,0) {$\dots$};
    \node at (18,0) {$\circ$};
    \node at (18,2) {$\circ$};
    \node at (18,4) {$\circ$};
    \node at (18,6) {$\circ$};
    \node at (18,8.5) {$\vdots$};
    \node at (18,10){$\circ$};
    \node at (18,12){$\circ$};

    \node at (20,0) {$\circ$};
    \node at (20,2) {$\circ$};
    \node at (20,4) {$\bullet$};
    \node at (20,6) {$\circ$};
    \node at (20,8.5) {$\vdots$};
    \node at (20,10){$\circ$};
    \node at (20,12){$\circ$};

    \draw [-] (20,4) -- (12,12);
        
    \end{tikzpicture}
\end{center}
\end{scriptsize}
The symbol $\bullet$ represents the module $R(\MMM_{n,a})_d$. We draw a line segment of slope $-1$ starting at $\bullet$ and moving northwest. In the above picture, the endpoint $\blacktriangle$ of this line segment is a northwest corner of the big triangle; this means we are in Case 1 of Lemma~\ref{lem:surjection-chain-inequalities}. The $\star$ below the $\blacktriangle$ is in the same row as the $\bullet$; the corresponding module is $R(\MMM_{n,a'})_d$. 

It can happen that the northwest diagonal emanating from $\bullet = R(\MMM_{n,a})_d$ does not end on a northwest corner of the big triangle. An example of this is shown below, with the endpoint of the diagonal denoted with $\blacktriangle$.
\begin{scriptsize}
\begin{center}
    \begin{tikzpicture}[scale = 0.2]

    \node (n,0) at (0,0) {$\circ$};

    \node (n-2,0) at (2,0) {$\circ$};
    \node (n-2,1) at (2,2) {$\circ$};

    \node (dots) at (6,0) {$\dots$};
    \node (dots) at (6,6) {\begin{sideways} $\ddots$ \end{sideways}};

    \node at (10,0) {$\circ$};
    \node at (10,2) {$\star$};
    \node at (10,4) {$\circ$};
    \node at (10,6) {$\circ$};
    \node at (10,8.5) {$\vdots$};
    \node at (10,10){$\circ$};
    
    \node at (12,0) {$\circ$};
    \node at (12,2) {$\circ$};
    \node at (12,4) {$\circ$};
    \node at (12,6) {$\circ$};
    \node at (12,8.5) {$\vdots$};
    \node at (12,10){$\blacktriangle$};
    \node at (12,12){$\circ$};

    \node at (15,0) {$\dots$};
    \node at (18,0) {$\circ$};
    \node at (18,2) {$\circ$};
    \node at (18,4) {$\circ$};
    \node at (18,6) {$\circ$};
    \node at (18,8.5) {$\vdots$};
    \node at (18,10){$\circ$};
    \node at (18,12){$\circ$};

    \node at (20,0) {$\circ$};
    \node at (20,2) {$\bullet$};
    \node at (20,4) {$\circ$};
    \node at (20,6) {$\circ$};
    \node at (20,8.5) {$\vdots$};
    \node at (20,10){$\circ$};
    \node at (20,12){$\circ$};

    \draw [-] (20,2) -- (12,10);
        
    \end{tikzpicture}
\end{center}
\end{scriptsize}
The above picture corresponds to Case 2 of Lemma~\ref{lem:surjection-chain-inequalities}. This time, the $\star$ is one column west of the $\blacktriangle$, but still in the same row as the $\bullet$. The module corresponding to $\star$ is $R(\MMM_{n,a'+1})_d$. 

In either case, Lemma~\ref{lem:surjection-chain-inequalities} asserts the inequality of symmetric functions $$\Frob(\bullet) \leq \{ \Frob(\star) \}_{\lambda_1 \leq n-2d+a}.$$
The weaker inequality $\Frob(\bullet) \leq \Frob(\star)$ follows from the surjection $\star \twoheadrightarrow \bullet$ of Lemma~\ref{lem:chain-of-surjections}. Lemma~\ref{lem:conjugacy-module-annihilation} states that the terms $s_\lambda$ in the Schur expansion of  $\Frob(\star)$ satisfy $\lambda_1 \leq n-2a+d$, and the desired inequality $\Frob(\bullet) \leq \{ \Frob(\star) \}_{\lambda_1 \leq n-2d+a}$ follows.

Surprisingly, the inequalities of Lemma~\ref{lem:surjection-chain-inequalities} turn out to be equalities in all cases. Proving this fact involves  an interplay between the `column sum' equality of Lemma~\ref{lem:column-sums} and the inequalities of Lemma~\ref{lem:surjection-chain-inequalities}. The crucial idea is to partition the triangle of Lemma~\ref{lem:chain-of-surjections} according to its northwest-to-southeast diagonals. Lemma~\ref{lem:surjection-chain-inequalities} compares these diagonals with columns, and columns are understood by Lemma~\ref{lem:column-sums}.

\begin{lemma}
    \label{lem:surjection-chain-equalities}
    Suppose $a \equiv n \mod 2$ and $0 \leq d \leq (n-a)/2$. 
    \begin{enumerate}
    \item If $(n-a)/2 \equiv d \mod 2$ we have the equality of symmetric functions
    \begin{equation*}
        \Frob (R(\MMM_{n,a})_d) = \left\{ \Frob( R(\MMM_{n,a'})_d ) \right\}_{\lambda_1 \leq n - 2d + a}
    \end{equation*}
    where 
    $$a' := \frac{n+a}{2} - d.$$
    \item If $(n-a)/2 \not\equiv d \mod 2$ we have the equality of symmetric functions
    \begin{equation*}
        \Frob (R(\MMM_{n,a})_d) = \left\{ \Frob( R(\MMM_{n,a'+1})_d ) \right\}_{\lambda_1 \leq n - 2d + a}.
    \end{equation*}
    \end{enumerate}
\end{lemma}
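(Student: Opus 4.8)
The plan is to sum the Schur-positive inequalities of Lemma~\ref{lem:surjection-chain-inequalities} over \emph{all} entries $R(\MMM_{n,a})_d$ (with $a \equiv n \bmod 2$ and $0 \le d \le (n-a)/2$) of the triangle of Lemma~\ref{lem:chain-of-surjections}, and to show that the two resulting total sums of symmetric functions coincide. Since each summand of the difference is Schur-positive and a sum of Schur-positive symmetric functions vanishes only when every summand does, this upgrades each inequality of Lemma~\ref{lem:surjection-chain-inequalities} to an equality.

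To evaluate the left-hand total $\sum \Frob(R(\MMM_{n,a})_d)$ I would group the entries by columns. Lemma~\ref{lem:R-degree-bound} lets us take the $a$-column to consist exactly of $R(\MMM_{n,a})_0, \dots, R(\MMM_{n,a})_{(n-a)/2}$, and Lemma~\ref{lem:column-sums} identifies its $\Frob$-sum as $s_{(n-a)/2}[s_2] \cdot s_a$. Writing $a = n - 2c$ and letting $c$ range over $0, \dots, \lfloor n/2 \rfloor$, the left-hand total becomes $\sum_{c=0}^{\lfloor n/2 \rfloor} s_c[s_2] \cdot s_{n-2c}$.

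To evaluate the right-hand total I would instead group by the northwest-to-southeast diagonals $D_k := \{ R(\MMM_{n,a})_d : (n-a)/2 + d = k \}$. The key observation is that along $D_k$ the module appearing on the right side of Lemma~\ref{lem:surjection-chain-inequalities} always lies in the single column with fixed-point count $a'' := n - 2\lfloor k/2 \rfloor$, and the truncation threshold $n - 2d + a$ is constant on $D_k$, equal to $2n - 2k$; this is $2(n - 2\lfloor k/2 \rfloor)$ when $k$ is even and $2(n - 2\lfloor k/2 \rfloor) - 2$ when $k$ is odd. Hence the diagonals $D_{2m}$ and $D_{2m+1}$ both feed column $m$, contributing (respectively) truncations at thresholds $2(n-2m)$ and $2(n-2m) - 2$ of partial column sums. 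Using the two-sided estimate $n - 2d \le \lambda_1 \le n - 2d + a$ for the irreducibles $V^\lambda$ occurring in $R(\MMM_{n,a})_d$ — the upper bound is Lemma~\ref{lem:conjugacy-module-annihilation}, and the lower bound follows from $\Frob(R(\MMM_n)_d) = s_d[s_2]\cdot s_{n-2d}$ (Theorem~\ref{thm:matching-frobenius}), the Pieri rule, and the surjection $R(\MMM_n)_d \twoheadrightarrow R(\MMM_{n,a})_{\le d}$ of Lemma~\ref{lem:surjection-lemma} — I would check that the degrees $d$ lying in column $m$ but \emph{not} on $D_{2m}$ (resp.\ not on $D_{2m+1}$) contribute nothing after truncation at $2(n-2m)$ (resp.\ at $2(n-2m)-2$), because for them the lower bound $\lambda_1 \ge n - 2d$ already exceeds the threshold. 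Consequently the $D_{2m}$-contribution is $\{ s_m[s_2]\cdot s_{n-2m} \}_{\lambda_1 \le 2(n-2m)}$ and the $D_{2m+1}$-contribution is $\{ s_m[s_2]\cdot s_{n-2m} \}_{\lambda_1 \le 2(n-2m)-2}$, once more by Lemma~\ref{lem:column-sums}. Summing over $m = 0, \dots, \lfloor n/2 \rfloor$ (the $m = \lfloor n/2 \rfloor$ term of the second family vanishes, since $\lambda_1 \ge 1$), the right-hand total becomes $\sum_{m=0}^{\lfloor n/2 \rfloor} \big( \{ s_m[s_2]\cdot s_{n-2m} \}_{\lambda_1 \le 2(n-2m)} + \{ s_m[s_2]\cdot s_{n-2m} \}_{\lambda_1 \le 2(n-2m)-2} \big)$, which by Lemma~\ref{lem:s-identity-two} equals $\sum_{m=0}^{\lfloor n/2 \rfloor} s_m[s_2] \cdot s_{n-2m}$ — exactly the left-hand total.

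The main obstacle I anticipate is the combinatorial bookkeeping of the diagonal regrouping: pinning down which column each diagonal feeds, checking that the truncation threshold is genuinely constant along a diagonal, and — the most delicate point — confirming that the ``missing'' small-degree entries of each column are annihilated by the relevant truncation. That last step is a short parity-and-inequality argument that splits into the cases $n$ even and $n$ odd, and it is precisely where the lower bound $\lambda_1 \ge n - 2d$ must be combined with the upper bound $\lambda_1 \le n - 2d + a$; everything else is a formal consequence of Lemmas~\ref{lem:column-sums} and~\ref{lem:s-identity-two}.
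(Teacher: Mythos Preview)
Your proposal is correct and follows essentially the same strategy as the paper: partition the triangle of Lemma~\ref{lem:chain-of-surjections} into northwest-to-southeast diagonals (your $D_k$ are precisely the paper's $P_a$ and $Q_a$ for $k$ even and odd, respectively), sum the inequalities of Lemma~\ref{lem:surjection-chain-inequalities}, and use Lemma~\ref{lem:s-identity-two} together with Lemma~\ref{lem:column-sums} to force the totals to match. The only real difference is that you explicitly invoke the lower bound $\lambda_1 \ge n-2d$ to argue that the ``missing'' small-degree rows of each column vanish after truncation, whereas the paper sidesteps this by simply bounding $\grFrob(P_a;q) \le \{\grFrob(R(\MMM_{n,a});q)\}_{\lambda_1 \le 2a}$ (which is automatic for the missing degrees since the left side is zero there) and then forcing equality after setting $q=1$; this makes the paper's version slightly slicker but yours is perfectly valid.
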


\begin{proof}
    For any $a \equiv n \mod 2$ we define two graded $\symm_d$-modules $P_a, Q_a$ by
    \begin{align}
        P_a &:= R(\MMM_{n,a})_{\frac{n-a}{2}} \oplus R(\MMM_{n,a-2})_{\frac{n-a}{2} - 1} \oplus R(\MMM_{n,a-4})_{\frac{n-a}{2} - 2} \oplus \cdots \\
        Q_a &:=R(\MMM_{n,a-2})_{\frac{n-a}{2}} \oplus R(\MMM_{n,a-4})_{\frac{n-a}{2} - 1} \oplus  R(\MMM_{n,a-6})_{\frac{n-a}{2} - 2} \oplus \cdots 
    \end{align}
    where the summands form complete northwest-to-southeast diagonals in the triangle of Lemma~\ref{lem:chain-of-surjections}. The cases of $n = 10$ and $n = 11$, these modules look as follows, with the $P$-modules in red and the $Q$-modules in blue.

    \begin{center}
    \begin{scriptsize}
        \begin{tikzpicture}[scale = 0.4]
                \draw [fill=red!50] (20,0.5) -- (20.5,0) -- (20,-0.5) -- (19.5,0) -- (20,0.5);
                \draw [fill=blue!50] (22,0.5) -- (22.5,0) -- (22,-0.5) -- (21.5,0) -- (22,0.5);
                \draw [fill=red!50] (21.5,2) -- (22,2.5) -- (24.5,0) -- (24,-0.5) -- (21.5,2);
                \draw [fill=blue!50] (23.5,2) -- (24,2.5) -- (26.5,0) -- (26,-0.5) -- (23.5,2);
                \draw [fill=red!50] (23.5,4) -- (24,4.5) -- (28.5,0) -- (28,-0.5) -- (23.5,4);
                \draw [fill=blue!50] (25.5,4) -- (26,4.5) -- (30.5,0) -- (30,-0.5) -- (25.5,4);
                \draw [fill=red!50] (25.5,6) -- (26,6.5) -- (30.5,2) -- (30,1.5) -- (25.5,6);
                \draw [fill=blue!50] (27.5,6) -- (28,6.5) -- (30.5,4) -- (30,3.5) -- (27.5,6);
                \draw [fill=red!50] (27.5,8) -- (28,8.5) -- (30.5,6) -- (30,5.5) -- (27.5,8);
                \draw [fill=blue!50] (29.5,8) -- (30,8.5) -- (30.5,8) -- (30,7.5) -- (29.5,8);
                \draw [fill=red!50] (29.5,10) -- (30,10.5) -- (30.5,10) -- (30,9.5) -- (29.5,10);
                
                \node at (20,0) {$\circ$};

                \node at (22,0) {$\circ$};
                \node at (22,2) {$\circ$};

                \node at (24,0) {$\circ$};
                \node at (24,2) {$\circ$};
                \node at (24,4) {$\circ$};

                \node at (26,0) {$\circ$};
                \node at (26,2) {$\circ$};
                \node at (26,4) {$\circ$};
                \node at (26,6) {$\circ$};

                \node at (28,0) {$\circ$};
                \node at (28,2) {$\circ$};
                \node at (28,4) {$\circ$};
                \node at (28,6) {$\circ$};
                \node at (28,8) {$\circ$};

                \node at (30,0) {$\circ$};
                \node at (30,2) {$\circ$};
                \node at (30,4) {$\circ$};
                \node at (30,6) {$\circ$};
                \node at (30,8) {$\circ$};
                \node at (30,10) {$\circ$};

                \node at (20,-1.5) {{\color{red} $P_{11}$}};
                \node at (22,-1.5) {{\color{blue} $Q_{11}$}}; 
                \node at (24,-1.5) {{\color{red} $P_9$}};
                \node at (26,-1.5) {{\color{blue} $Q_9$}};
                \node at (28,-1.5) {{\color{red} $P_7$}};
                \node at (31,-1) {{\color{blue} $Q_7$}};
                \node at (31,1.5) {{\color{red} $P_6$}};
                \node at (31,3.5) {{\color{blue} $Q_6$}};
                \node at (31,5.5) {{\color{red} $P_3$}};
                \node at (31,7.5) {{\color{blue} $Q_3$}};
                \node at (31,9.5) {{\color{red} $P_1$}};

                \draw [fill=red!50] (0,0.5) -- (0.5,0) -- (0,-0.5) -- (-0.5,0) -- (0,0.5);
                \draw [fill=blue!50] (2,0.5) -- (2.5,0) -- (2,-0.5) -- (1.5,0) -- (2,0.5);
                \draw [fill=red!50] (1.5,2) -- (2,2.5) -- (4.5,0) -- (4,-0.5) -- (1.5,2);
                \draw [fill=blue!50] (3.5,2) -- (4,2.5) -- (6.5,0) -- (6,-0.5) -- (3.5,2);
                \draw [fill=red!50] (3.5,4) -- (4,4.5) -- (8.5,0) -- (8,-0.5) -- (3.5,4);
                \draw [fill=blue!50] (5.5,4) -- (6,4.5) -- (10.5,0) -- (10,-0.5) -- (5.5,4);
                \draw [fill=red!50] (5.5,6) -- (6,6.5) -- (10.5,2) -- (10,1.5) -- (5.5,6);
                \draw [fill=blue!50] (7.5,6) -- (8,6.5) -- (10.5,4) -- (10,3.5) -- (7.5,6);
                \draw [fill=red!50] (7.5,8) -- (8,8.5) -- (10.5,6) -- (10,5.5) -- (7.5,8);
                \draw [fill=blue!50] (9.5,8) -- (10,8.5) -- (10.5,8) -- (10,7.5) -- (9.5,8);
                \draw [fill=red!50] (9.5,10) -- (10,10.5) -- (10.5,10) -- (10,9.5) -- (9.5,10);
                
                \node at (0,0) {$\circ$};

                \node at (2,0) {$\circ$};
                \node at (2,2) {$\circ$};

                \node at (4,0) {$\circ$};
                \node at (4,2) {$\circ$};
                \node at (4,4) {$\circ$};

                \node at (6,0) {$\circ$};
                \node at (6,2) {$\circ$};
                \node at (6,4) {$\circ$};
                \node at (6,6) {$\circ$};

                \node at (8,0) {$\circ$};
                \node at (8,2) {$\circ$};
                \node at (8,4) {$\circ$};
                \node at (8,6) {$\circ$};
                \node at (8,8) {$\circ$};

                \node at (10,0) {$\circ$};
                \node at (10,2) {$\circ$};
                \node at (10,4) {$\circ$};
                \node at (10,6) {$\circ$};
                \node at (10,8) {$\circ$};
                \node at (10,10) {$\circ$};

                \node at (0,-1.5) {{\color{red} $P_{10}$}};
                \node at (2,-1.5) {{\color{blue} $Q_{10}$}}; 
                \node at (4,-1.5) {{\color{red} $P_8$}};
                \node at (6,-1.5) {{\color{blue} $Q_8$}};
                \node at (8,-1.5) {{\color{red} $P_6$}};
                \node at (11,-1) {{\color{blue} $Q_6$}};
                \node at (11,1.5) {{\color{red} $P_4$}};
                \node at (11,3.5) {{\color{blue} $Q_4$}};
                \node at (11,5.5) {{\color{red} $P_2$}};
                \node at (11,7.5) {{\color{blue} $Q_2$}};
                \node at (11,9.5) {{\color{red} $P_0$}};

                \node at (6,-3) {\begin{small}$n = 10$\end{small}};
                \node at (26,-3) {\begin{small}$n = 11$\end{small}};
        \end{tikzpicture}
    \end{scriptsize} 
    \end{center}
    For any $n$, the $P$-modules contain a summand on the diagonal of the triangle while the $Q$-modules do not. If $\delta = n \mod 2 \in \{0,1\}$ we have $Q_\delta = 0$.

    The summands of $P_a, Q_a$ for $a \equiv n \mod 2$ form a partition of the triangle of Lemma~\ref{lem:chain-of-surjections}. Another partition of this triangle separates it into columns. Lemma~\ref{lem:column-sums} implies
    \begin{equation}
    \label{eqn:PQ-sum}
        \sum_a \grFrob(P_a;1) + \sum_a \grFrob(Q_a;1) = \sum_a s_{(n-a)/2}[s_2] \cdot s_{a}
    \end{equation}
    where the sums range over all $0 \leq a \leq n$ with $a \equiv n \mod 2$.

    Lemma~\ref{lem:surjection-chain-inequalities} gives upper bounds on the graded $\symm_n$-modules $P_a$ and
    $Q_a$:
    \begin{align}
        \grFrob(P_a;q) &\leq \{ \grFrob(R(\MMM_{n,a});q) \}_{\lambda_1 \leq 2a}, \label{eqn:q-P-inequality} \\
        \grFrob(Q_a;q) &\leq \{ \grFrob(R(\MMM_{n,a});q) \}_{\lambda_1 \leq 2a-2}. \label{eqn:q-Q-inequality}
    \end{align}
    These upper bounds may  be visualized as follows. The inequality \eqref{eqn:q-P-inequality} compares a (red) $P$-module with the column whose top coincides with the northwest end of that module. Similarly, the inequality \eqref{eqn:q-Q-inequality} compares a (blue) $Q$-module with the column whose top is one unit west of the northwest end of that module. The case of the $P$-modules is shown on the left below while the case of the $Q$-modules is shown on the right. In the top two examples, the graded module $R(\MMM_{n,a})$ has the same lowest degree (i.e. degree 0) as the $P/Q$-modules. In the bottom two examples, the $P/Q$-modules vansh in degree 0, so the $R(\MMM_{n,a})$-module has smaller nonvanishing degrees.
    \begin{center}
    \begin{scriptsize}
        \begin{tikzpicture}[scale = 0.4]
        \draw [fill=gray!50] (23.65,4.35) -- (24.35,4.35) -- (24.35,-0.35) -- (23.65,-0.35) -- (23.65,4.35);
                \draw [fill=blue!50] (25.5,4) -- (26,4.5) -- (30.5,0) -- (30,-0.5) -- (25.5,4);

                \node at (20,0) {$\circ$};

                \node at (22,0) {$\circ$};
                \node at (22,2) {$\circ$};

                \node at (24,0) {$\circ$};
                \node at (24,2) {$\circ$};
                \node at (24,4) {$\circ$};

                \node at (26,0) {$\circ$};
                \node at (26,2) {$\circ$};
                \node at (26,4) {$\circ$};
                \node at (26,6) {$\circ$};

                \node at (28,0) {$\circ$};
                \node at (28,2) {$\circ$};
                \node at (28,4) {$\circ$};
                \node at (28,6) {$\circ$};
                \node at (28,8) {$\circ$};

                \node at (30,0) {$\circ$};
                \node at (30,2) {$\circ$};
                \node at (30,4) {$\circ$};
                \node at (30,6) {$\circ$};
                \node at (30,8) {$\circ$};
                \node at (30,10) {$\circ$};

                \draw [fill=gray!50] (3.65,4.35) -- (4.35,4.35) -- (4.35,-0.35) -- (3.65,-0.35) -- (3.65,4.35);
                \draw [fill=red!50] (3.5,4) -- (4,4.5) -- (8.5,0) -- (8,-0.5) -- (3.5,4);

                \node at (0,0) {$\circ$};

                \node at (2,0) {$\circ$};
                \node at (2,2) {$\circ$};

                \node at (4,0) {$\circ$};
                \node at (4,2) {$\circ$};
                \node at (4,4) {$\circ$};

                \node at (6,0) {$\circ$};
                \node at (6,2) {$\circ$};
                \node at (6,4) {$\circ$};
                \node at (6,6) {$\circ$};

                \node at (8,0) {$\circ$};
                \node at (8,2) {$\circ$};
                \node at (8,4) {$\circ$};
                \node at (8,6) {$\circ$};
                \node at (8,8) {$\circ$};

                \node at (10,0) {$\circ$};
                \node at (10,2) {$\circ$};
                \node at (10,4) {$\circ$};
                \node at (10,6) {$\circ$};
                \node at (10,8) {$\circ$};
                \node at (10,10) {$\circ$};

                \node at (0,-3) {};
        \end{tikzpicture}
        \begin{tikzpicture} [scale = 0.4]
                \draw [fill=blue!50] (27.5,6) -- (28,6.5) -- (30.5,4) -- (30,3.5) -- (27.5,6);
                \draw [fill=gray!50] (25.65,6.35) -- (26.35,6.35) -- (26.35,-0.35) -- (25.65,-0.35) -- (25.65,6.35);
                
                \node at (20,0) {$\circ$};

                \node at (22,0) {$\circ$};
                \node at (22,2) {$\circ$};

                \node at (24,0) {$\circ$};
                \node at (24,2) {$\circ$};
                \node at (24,4) {$\circ$};

                \node at (26,0) {$\circ$};
                \node at (26,2) {$\circ$};
                \node at (26,4) {$\circ$};
                \node at (26,6) {$\circ$};

                \node at (28,0) {$\circ$};
                \node at (28,2) {$\circ$};
                \node at (28,4) {$\circ$};
                \node at (28,6) {$\circ$};
                \node at (28,8) {$\circ$};

                \node at (30,0) {$\circ$};
                \node at (30,2) {$\circ$};
                \node at (30,4) {$\circ$};
                \node at (30,6) {$\circ$};
                \node at (30,8) {$\circ$};
                \node at (30,10) {$\circ$};

                \draw [fill=gray!50] (5.65,6.35) -- (6.35,6.35) -- (6.35,-0.35) -- (5.65,-0.35) -- (5.65,6.35);
                \draw [fill=red!50] (5.5,6) -- (6,6.5) -- (10.5,2) -- (10,1.5) -- (5.5,6);

                \node at (0,0) {$\circ$};

                \node at (2,0) {$\circ$};
                \node at (2,2) {$\circ$};

                \node at (4,0) {$\circ$};
                \node at (4,2) {$\circ$};
                \node at (4,4) {$\circ$};

                \node at (6,0) {$\circ$};
                \node at (6,2) {$\circ$};
                \node at (6,4) {$\circ$};
                \node at (6,6) {$\circ$};

                \node at (8,0) {$\circ$};
                \node at (8,2) {$\circ$};
                \node at (8,4) {$\circ$};
                \node at (8,6) {$\circ$};
                \node at (8,8) {$\circ$};

                \node at (10,0) {$\circ$};
                \node at (10,2) {$\circ$};
                \node at (10,4) {$\circ$};
                \node at (10,6) {$\circ$};
                \node at (10,8) {$\circ$};
                \node at (10,10) {$\circ$};
        \end{tikzpicture}
    \end{scriptsize} 
    \end{center}
    Setting $q = 1$ in \eqref{eqn:q-P-inequality}, \eqref{eqn:q-Q-inequality} and applying Lemma~\ref{lem:column-sums}, we see that
    \begin{align}
        \grFrob(P_a;1) &\leq \{ \grFrob(R(\MMM_{n,a});1) \}_{\lambda_1 \leq 2a} = \{ s_{(n-a)/2}[s_2] \cdot s_{a} \}_{\lambda_1 \leq 2a}, \label{eqn:P-bound} \\
        \grFrob(Q_a;1) &\leq \{ \grFrob(R(\MMM_{n,a});1) \}_{\lambda_1 \leq 2a-2} =  \{ s_{(n-a)/2}[s_2] \cdot s_{a} \}_{\lambda_1 \leq 2a-2}. \label{eqn:Q-bound}
    \end{align}
    Lemma~\ref{lem:s-identity-two} says that 
    \begin{equation}
    \label{eqn:s-sum-over-a}
        \sum_a \{ s_{(n-a)/2}[s_2] \cdot s_{a} \}_{\lambda_1 \leq 2a} + \sum_a \{ s_{(n-a)/2}[s_2] \cdot s_{a} \}_{\lambda_1 \leq 2a-2} = \sum_a s_{(n-a)/2}[s_2] \cdot s_{a}.
    \end{equation}
    Combining Equation~\eqref{eqn:PQ-sum} with the inequalities \eqref{eqn:P-bound}, \eqref{eqn:Q-bound}, we get the ungraded Frobenius images
    \begin{align}
        \grFrob(P_a;1) &= \{ s_{(n-a)/2}[s_2] \cdot s_{a} \}_{\lambda_1 \leq 2a} = \{ \grFrob(R(\MMM_{n,a});1) \}_{\lambda_1 \leq 2a}, \label{eqn:P-equality} \\
        \grFrob(Q_a;1) & = \{ s_{(n-a)/2}[s_2] \cdot s_{a} \}_{\lambda_1 \leq 2a-2} = \{ \grFrob(R(\MMM_{n,a});1) \}_{\lambda_1 \leq 2a-2}. \label{eqn:Q-equality}
    \end{align}
    Combining Equations~\eqref{eqn:P-equality}, \eqref{eqn:Q-equality} with the inequalities \eqref{eqn:q-P-inequality}, \eqref{eqn:q-Q-inequality} yields the equalities
    \begin{align}
        \grFrob(P_a;q) &= \{ \grFrob(R(\MMM_{n,a});q) \}_{\lambda_1 \leq 2a}, \label{eqn:q-P-equality} \\
        \grFrob(Q_a;q) &= \{ \grFrob(R(\MMM_{n,a});q) \}_{\lambda_1 \leq 2a-2}. \label{eqn:q-Q-equality}
    \end{align}
    Statement (1) of the lemma follows from \eqref{eqn:q-P-equality} and statement (2) follows from Equation~\eqref{eqn:q-Q-equality}.
\end{proof}

Lemma~\ref{lem:surjection-chain-equalities} is a powerful result which asserts isomorphisms between truncated versions of modules appearing in the triangle of Lemma~\ref{lem:chain-of-surjections}. Informally, it may be stated as follows.
\begin{quote}
    {\em Suppose $R(\MMM_{n,a})_d = \bullet$ is a module in the triangle of Lemma~\ref{lem:chain-of-surjections}. Find the modules $\blacktriangle$ and $\star$ as in the discussion before Lemma~\ref{lem:surjection-chain-equalities}. Then $\star$ and $\bullet$ are in the same row, and the Schur expansion of $\Frob(\bullet)$ may be obtained from $\Frob(\star)$ by removing any terms $s_\lambda$ for which $\lambda_1 > n - 2d + a$.}
\end{quote}
In terms of Lemma~\ref{lem:chain-of-surjections}, the bound $\lambda_1 \leq n-2d+a$ depends only on the northwest-to-southeast diagonal of the triangle containing $R(\MMM_{n,a})_d$, and weakens as one moves from east to west. The following consequence of Lemma~\ref{lem:surjection-chain-equalities} is what we will need to calculate $\grFrob(R(\MMM_{n,a});q)$.

\begin{lemma}
    \label{lem:module-transfer}
    Suppose $a \equiv n \mod 2$ and $0 \leq d \leq (n-a)/2$. We have the equality of symmetric functions
    \begin{equation}
        \Frob (R(\MMM_{n,a})_d) = \{ \Frob ( R(\MMM_{n,n-2d})_d ) \}_{\lambda_1 \leq n-2d+a}.
    \end{equation}
\end{lemma}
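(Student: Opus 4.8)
The plan is to obtain this identity as a ``telescoping'' of Lemma~\ref{lem:surjection-chain-equalities} along the single row $d$ of the triangle in Lemma~\ref{lem:chain-of-surjections}. Recall that Lemma~\ref{lem:surjection-chain-equalities} writes $\Frob(R(\MMM_{n,a})_d)$ as a truncation $\{\Frob(R(\MMM_{n,b})_d)\}_{\lambda_1 \le n-2d+a}$, where, setting $a' := \tfrac{n+a}{2}-d$, we have $b = a'$ in case (1) and $b = a'+1$ in case (2); in both cases the module $R(\MMM_{n,b})_d$ lies in the same row $d$ of the triangle but is strictly nearer its hypotenuse, i.e. nearer to $R(\MMM_{n,n-2d})_d$. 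So the idea is simply to keep feeding the new module back into Lemma~\ref{lem:surjection-chain-equalities} until the column index reaches $n-2d$, and then check that nothing is lost along the way because the truncation bound stays pinned at $n-2d+a$.

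Concretely, I would induct on the nonnegative even integer $g := n-2d-a$, which records the distance from $R(\MMM_{n,a})_d$ to the hypotenuse within row $d$. The base case $g=0$ is $a=n-2d$, where the assertion reads $\Frob(R(\MMM_{n,n-2d})_d) = \{\Frob(R(\MMM_{n,n-2d})_d)\}_{\lambda_1 \le 2(n-2d)}$, which is immediate from Lemma~\ref{lem:conjugacy-module-annihilation}. For the inductive step with $g>0$, I would first verify that the new index $b$ produced by Lemma~\ref{lem:surjection-chain-equalities} satisfies $b \equiv n \bmod 2$ and $0 \le d \le (n-b)/2$, and that $n-2d-b$ equals $g/2$ in case (1) and $g/2-1$ in case (2); in both cases this simplifies to $2\lfloor g/4 \rfloor$, a smaller nonnegative even integer, so the induction is well-founded and terminates at the hypotenuse. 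The inductive hypothesis then gives $\Frob(R(\MMM_{n,b})_d) = \{\Frob(R(\MMM_{n,n-2d})_d)\}_{\lambda_1 \le n-2d+b}$, and since $b \ge a$ the bound $n-2d+a$ is the tighter of the two, so substituting into Lemma~\ref{lem:surjection-chain-equalities} and using $\{\{F\}_{\lambda_1 \le m'}\}_{\lambda_1 \le m} = \{F\}_{\lambda_1 \le \min(m,m')}$ collapses the composite truncation to exactly $\{\Frob(R(\MMM_{n,n-2d})_d)\}_{\lambda_1 \le n-2d+a}$.

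I do not expect a conceptual obstacle: all of the substance is already contained in Lemma~\ref{lem:surjection-chain-equalities}, and what remains is the bookkeeping that makes the iteration legitimate. The computations are one-liners from the definitions: in case (1) one has $n-b = \tfrac{n-a}{2}+d \equiv 2d \equiv 0 \bmod 2$, and in case (2) one has $n-b = \tfrac{n-a}{2}+d-1$, which is even precisely because case (2) means $\tfrac{n-a}{2}\not\equiv d$; the inequality $b \ge a$ is $g/2 \ge 0$ (resp. $g/2+1 > 0$), and $n-2d-b < g$ follows from $2\lfloor g/4\rfloor \le g/2 < g$. The only mild care point is case (2), where the hypothesis $(n-a)/2 \not\equiv d \bmod 2$ together with $d \le (n-a)/2$ forces $d < (n-a)/2$, hence $g \ge 2$, which is exactly what is needed to keep $g/2 - 1 \ge 0$ and stay inside the range where Lemma~\ref{lem:surjection-chain-equalities} applies to $b = a'+1$.
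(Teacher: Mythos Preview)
Your proposal is correct and follows essentially the same approach as the paper: iterate Lemma~\ref{lem:surjection-chain-equalities} along row $d$ until reaching the hypotenuse entry $R(\MMM_{n,n-2d})_d$, using that the truncation bound $n-2d+a$ is always the tightest one encountered. The paper phrases this as an informal iteration producing a sequence $\bullet, \star_1, \star_2, \dots$ while you package it as a formal induction on $g = n-2d-a$ with explicit parity bookkeeping; the content is identical.
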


\begin{proof}
    If $d = (n-a)/2$, i.e. if $R(\MMM_{n,a})_d$ lies on the main diagonal of the triangle in Lemma~\ref{lem:chain-of-surjections}, we are done by Lemma~\ref{lem:conjugacy-module-annihilation}. Otherwise,
    consider iterating Lemma~\ref{lem:surjection-chain-equalities} in the form of the italicized quote above. 
    
    Starting with the module $\bullet := R(\MMM_{n,a})_d$, we obtain a module $\star =: \star_1$ in the same row as $R(\MMM_{n,a})_d$. The module $\star_1$ is strictly closer to the main diagonal of the triangle and there holds the symmetric function equality
    $$ \Frob(R(\MMM_{n,a})_d) = \{ \Frob(\star_1) \}_{\lambda_1 \leq n-2d+a}.$$
    If $\star_1$ lies on the main diagonal of the triangle in Lemma~\ref{lem:chain-of-surjections} we are done. Otherwise, we apply Lemma~\ref{lem:chain-of-surjections} to $\star_1$ to get a new module $\star_2$ which is strictly closer to the main diagonal than $\star_1$ and lies in the same row as $R(\MMM_{n,a})_d$. We have
     $$ \Frob(R(\MMM_{n,a})_d) = \{ \Frob(\star_1) \}_{\lambda_1 \leq n-2d+a} = \{ \Frob(\star_2) \}_{\lambda_1 \leq n-2d+a},$$
     where the second equality is justified by Lemma~\ref{lem:surjection-chain-equalities} since $\star_2$ lies on a northwest-to-southeast diagonal to the west of that containing $\star_1$. If $\star_2$ lies on the main diagonal of the triangle we are done. Otherwise, we form $\star_3$ from $\star_2$ using Lemma~\ref{lem:surjection-chain-equalities}, and so on.
\end{proof}

One possible sequence $\bullet, \star_1, \star_2, \dots $ as in the proof of Lemma~\ref{lem:module-transfer} is depicted below; this sequence terminates with $\star_3$ on the main diagonal. 
\begin{center}
\begin{scriptsize}
    \begin{tikzpicture}[scale = 0.3]
            \draw[-] (10,5) -- (8,7);
            \draw[-]  (7,5) -- (6,6);
            
            \node at (0,0) {$\circ$};
            
            \node at (1,0) {$\circ$};
            \node at (1,1) {$\circ$};

            \node at (2,0) {$\circ$};
            \node at (2,1) {$\circ$};
            \node at (2,2) {$\circ$};

            \node at (3,0) {$\circ$};
            \node at (3,1) {$\circ$};
            \node at (3,2) {$\circ$};
            \node at (3,3) {$\circ$};

            \node at (4,0) {$\circ$};
            \node at (4,1) {$\circ$};
            \node at (4,2) {$\circ$};
            \node at (4,3) {$\circ$};
            \node at (4,4) {$\circ$};

            \node at (5,0) {$\circ$};
            \node at (5,1) {$\circ$};
            \node at (5,2) {$\circ$};
            \node at (5,3) {$\circ$};
            \node at (5,4) {$\circ$};
            \node at (5.25,5) {$\star_3$};

            \node at (6,0) {$\circ$};
            \node at (6,1) {$\circ$};
            \node at (6,2) {$\circ$};
            \node at (6,3) {$\circ$};
            \node at (6,4) {$\circ$};
            \node at (6.25,5) {$\star_2$};
            \node at (6,6) {$\circ$};

            \node at (7,0) {$\circ$};
            \node at (7,1) {$\circ$};
            \node at (7,2) {$\circ$};
            \node at (7,3) {$\circ$};
            \node at (7,4) {$\circ$};
            \node at (7.25,5) {$\star_1$};
            \node at (7,6) {$\circ$};
            \node at (7,7) {$\circ$};

            \node at (8,0) {$\circ$};
            \node at (8,1) {$\circ$};
            \node at (8,2) {$\circ$};
            \node at (8,3) {$\circ$};
            \node at (8,4) {$\circ$};
            \node at (8,5) {$\circ$};
            \node at (8,6) {$\circ$};
            \node at (8,7) {$\circ$};
            \node at (8,8) {$\circ$};

            \node at (9,0) {$\circ$};
            \node at (9,1) {$\circ$};
            \node at (9,2) {$\circ$};
            \node at (9,3) {$\circ$};
            \node at (9,4) {$\circ$};
            \node at (9,5) {$\circ$};
            \node at (9,6) {$\circ$};
            \node at (9,7) {$\circ$};
            \node at (9,8) {$\circ$};
            \node at (9,9) {$\circ$};

            \node at (10,0) {$\circ$};
            \node at (10,1) {$\circ$};
            \node at (10,2) {$\circ$};
            \node at (10,3) {$\circ$};
            \node at (10,4) {$\circ$};
            \node at (10,5) {$\bullet$};
            \node at (10,6) {$\circ$};
            \node at (10,7) {$\circ$};
            \node at (10,8) {$\circ$};
            \node at (10,9) {$\circ$};
            \node at (10,10) {$\circ$};

            \node at (11,0) {$\circ$};
            \node at (11,1) {$\circ$};
            \node at (11,2) {$\circ$};
            \node at (11,3) {$\circ$};
            \node at (11,4) {$\circ$};
            \node at (11,5) {$\circ$};
            \node at (11,6) {$\circ$};
            \node at (11,7) {$\circ$};
            \node at (11,8) {$\circ$};
            \node at (11,9) {$\circ$};
            \node at (11,10) {$\circ$};
            \node at (11,11) {$\circ$};
    \end{tikzpicture}
\end{scriptsize}
\end{center}

We have all of the necessary tools to calculate the graded $\symm_n$-module structure of $R(\MMM_{n,a})$. The representation-theoretic Lemma~\ref{lem:module-transfer} plays a key role in the proof, as does the symmetric function identity in Lemma~\ref{lem:s-identity-one}.

\begin{theorem}
    \label{thm:conjugacy-module-character}
    Suppose $a \equiv n \mod 2$. The graded Frobenius image of $R(\MMM_{n,a})$ is given by
    \begin{equation}
        \grFrob(R(\MMM_{n,a});q) = \sum_{d \, = \, 0}^{(n-a)/2} \{
            s_d[s_2] \cdot s_{n-2d} - s_{d-1}[s_2] \cdot s_{n-2d+2}
        \}_{\lambda_1 \leq n-2d+a} \cdot q^d
    \end{equation}  
    where we interpret $s_{-1} := 0$.
\end{theorem}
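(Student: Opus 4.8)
The plan is to treat Lemma~\ref{lem:module-transfer} as the engine and reduce the entire computation to the ``diagonal'' modules, i.e.\ the top nonvanishing graded pieces $R(\MMM_{n,b})_{(n-b)/2}$ for $b \equiv n \bmod 2$. For such $b$ set $G_b := \Frob\big(R(\MMM_{n,b})_{(n-b)/2}\big)$ (nonvanishing by Lemma~\ref{lem:R-degree-bound}). Lemma~\ref{lem:module-transfer} asserts that for every admissible pair $(a,d)$ (that is, $a\equiv n\bmod 2$ and $0\le d\le (n-a)/2$) one has $\Frob(R(\MMM_{n,a})_d) = \{G_{n-2d}\}_{\lambda_1 \le n-2d+a}$. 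Since $d\le (n-a)/2$ forces $n-2d+a \le 2(n-2d)$, and since $G_{n-2d}$ only involves $s_\lambda$ with $\lambda_1\le 2(n-2d)$ by Lemma~\ref{lem:conjugacy-module-annihilation}, it is enough to prove the single identity
\[
G_{n-2d} \;=\; \{\, s_d[s_2]\cdot s_{n-2d} - s_{d-1}[s_2]\cdot s_{n-2d+2}\,\}_{\lambda_1 \le 2(n-2d)}
\]
for each $d$; the statement of the theorem then follows by applying the truncation $\{-\}_{\lambda_1\le n-2d+a}$ to both sides (the iterated truncation collapses because $n-2d+a\le 2(n-2d)$) and multiplying by $q^d$ and summing.

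To establish that diagonal identity I would induct on $d$. The base case $d=0$ is immediate: $\MMM_{n,n}=\{\mathrm{id}\}$, so $G_n = s_n$, matching the right-hand side. For the inductive step, orbit harmonics together with the degree vanishing of Lemma~\ref{lem:R-degree-bound} and the identification $\Frob(\CC[\MMM_{n,n-2d}]) = s_d[s_2]\cdot s_{n-2d}$ (as used in the proof of Theorem~\ref{thm:matching-frobenius} and in Lemma~\ref{lem:column-sums}) give the ungraded total
\[
\sum_{j=0}^{d} \Frob\big(R(\MMM_{n,n-2d})_j\big) \;=\; s_d[s_2]\cdot s_{n-2d}.
\]
For each $j<d$, Lemma~\ref{lem:module-transfer} rewrites $\Frob(R(\MMM_{n,n-2d})_j) = \{G_{n-2j}\}_{\lambda_1\le 2n-2j-2d}$, and the inductive hypothesis (valid since $2n-2j-2d\le 2(n-2j)$ when $j\le d$) turns this into $\{\,s_j[s_2]s_{n-2j}-s_{j-1}[s_2]s_{n-2j+2}\,\}_{\lambda_1\le 2n-2j-2d}$. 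Subtracting the $j<d$ terms from the ungraded total isolates $G_{n-2d}$, and one checks this leftover equals the claimed truncation by invoking Lemma~\ref{lem:s-identity-one} with its parameter $a$ specialized to $n-2d$: then $(n-a)/2=d$ there, its truncation bounds $n-2d'+a$ become exactly $2n-2d'-2d$, and the lemma reads as precisely the telescoping identity needed. This closes the induction.

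The representation-theoretic substance of the theorem is already packaged into Lemmas~\ref{lem:module-transfer}, \ref{lem:chain-of-surjections}, and \ref{lem:conjugacy-module-annihilation}, and the combinatorial substance into Lemma~\ref{lem:s-identity-one}, so the argument above is in essence careful bookkeeping. The one place demanding real attention is the recognition that Lemma~\ref{lem:s-identity-one}, superficially a statement about one value of $a$, is exactly the telescoping identity dictated by the induction after the specialization $a\mapsto n-2d$; relatedly, one must verify throughout that the nested first-row truncations $\lambda_1\le n-2d+a$ and $\lambda_1\le 2n-2j-2d$ are always weaker than the ambient bound $\lambda_1\le 2(n-2d)$ (using $a\le n-2d$ and $j\le d$), so that composing truncations loses nothing and the $q$-graded refinement is genuine rather than an artifact of over-truncation.
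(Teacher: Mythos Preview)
Your proposal is correct and essentially mirrors the paper's own proof: both arguments reduce via Lemma~\ref{lem:module-transfer} to the diagonal top-degree pieces, determine those by subtracting lower-degree contributions from the ungraded total $s_d[s_2]\cdot s_{n-2d}$, and close with Lemma~\ref{lem:s-identity-one}. Your upward induction on $d$ is equivalent to the paper's downward induction on $a$ (since $d=(n-a)/2$ along the diagonal), with your separation of ``compute $G_{n-2d}$, then truncate'' making the logic slightly more explicit than the paper's direct descent on $a$.
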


\begin{proof}
    We use downwards induction on $a$. If $a = n$, then $R(\MMM_{n,n}) = \CC$ is the trivial representation in degree 0 which indeed has graded Frobenius image $s_0[s_2] \cdot s_n \cdot q^0 = s_n \cdot q^0$.

    Suppose $a < n$. Lemma~\ref{lem:R-degree-bound} implies that $R(\MMM_{n,a})_d = 0$ for $d > (n-a)/2$. By Lemma~\ref{lem:module-transfer} we have
    \begin{align}
        \grFrob( R(\MMM_{n,a}); q) &= \sum_{d \, = \, 0}^{\frac{n-a}{2}} \Frob(R(\MMM_{n,a})_d) \cdot q^d \\ 
        &= \sum_{d \, = \, 0}^{\frac{n-a}{2}} \{\Frob(R(\MMM_{n,n-2d})_d)\}_{\lambda_1 \leq n - 2d + a} \cdot q^d.
    \end{align}
    Induction on $a$ gives every term of this sum except for the one indexed by $d = (n-a)/2$; we have
    \begin{multline}
    \label{eqn:graded-frobenius-inductive-step}
        \sum_{d \, = \, 0}^{\frac{n-a}{2}} \{\Frob(R(\MMM_{n,n-2d})_d)\}_{\lambda_1 \leq n - 2d + a} \cdot q^d = \\
        \Frob( R(\MMM_{n,a} )_{\frac{n-a}{2}} ) \cdot q^{\frac{n-a}{2}} + 
        \sum_{d \, = \, 0}^{\frac{n-a}{2} - 1} \{ \{  s_d[s_2] \cdot s_{n-2d} - s_{d-1}[s_2] \cdot s_{n-2d+2} \}_{\lambda_1 \leq 2(n-2d)} \}_{\lambda_1 \leq n - 2d + a} \cdot q^d = \\
        \Frob( R(\MMM_{n,a} )_{\frac{n-a}{2}} ) \cdot q^{\frac{n-a}{2}} + 
        \sum_{d \, = \, 0}^{\frac{n-a}{2} - 1} \{  s_d[s_2] \cdot s_{n-2d} - s_{d-1}[s_2] \cdot s_{n-2d+2}  \}_{\lambda_1 \leq n - 2d + a} \cdot q^d
    \end{multline}
    where the second equality used $n - 2d + a \leq 2(n-2d)$ for $d < (n-a)/2$. Orbit harmonics gives the ungraded Frobenius image
    \begin{equation}
        \label{eqn:induction-ungraded-frobenius}
        \grFrob( R(\MMM_{n,a}); 1) = \Frob (R(\MMM_{n,a})) = \Frob( \CC[\MMM_{n,a}]) = s_{(n-a)/2}[s_2] \cdot s_a
    \end{equation}
    where the last equality is Equation~\eqref{eqn:induction-product-identification}. Combining Equations~\eqref{eqn:graded-frobenius-inductive-step} and \eqref{eqn:induction-ungraded-frobenius} gives
    \begin{equation}
        \label{eqn:top-determination}
        \Frob( R(\MMM_{n,a} )_{\frac{n-a}{2}} )  = s_{(n-a)/2}[s_2] \cdot s_a - \sum_{d \, = \, 0}^{\frac{n-a}{2} - 1} \{  s_d[s_2] \cdot s_{n-2d} - s_{d-1}[s_2] \cdot s_{n-2d+2}  \}_{\lambda_1 \leq n - 2d + a},
    \end{equation}
    from which Lemma~\ref{lem:s-identity-one} forces
    \begin{equation}
    \label{eqn:forced-consequence}
        \Frob( R(\MMM_{n,a} )_{\frac{n-a}{2}} )  = 
        \{  s_{(n-a)/2}[s_2] \cdot s_{a} - s_{(n-a-2)/2}[s_2] \cdot s_{a+2}  \}_{\lambda_1 \leq 2a}.
    \end{equation}
    Combining Equations~\eqref{eqn:graded-frobenius-inductive-step} and \eqref{eqn:forced-consequence} completes the proof.
\end{proof}

The authors do not know a combinatorial interpretation of the Hilbert series of $R(\MMM_{n,a})$. It would be interesting to find a statistic $\mathrm{stat}: \MMM_{n,a} \to \ZZ_{\geq 0}$ such that 
\begin{equation}\Hilb(R(\MMM_{n,a});q) = \sum_{w \, \in \, \MMM_{n,a}} q^{\mathrm{stat}(w)}.
\end{equation}

\section{Conclusion}
\label{sec:Conclusion}

If $\Zpoints \subseteq \Mat_{n \times n}(\CC)$ is any set of permutation matrices closed under the conjugation action of $\symm_n$, the orbit harmonics quotient $R(\Zpoints)$ is a $\symm_n$-module. The minimal such loci are the conjugacy classes \begin{equation}
    \KKK_\lambda := \{ w \in \symm_n \,:\, \text{$w$ has cycle type $\lambda$} \}.
\end{equation}
We regard $\KKK_\lambda$ as a set of permutation matrices in $\Mat_{n \times n}(\CC)$.

\begin{problem}
    \label{prob:other-conjugacy-classes}
    For arbitrary $\lambda \vdash n$, calculate the graded $\symm_n$-structure of $R(\KKK_\lambda)$.
\end{problem}

Section~\ref{sec:Conjugacy} solves Problem~\ref{prob:other-conjugacy-classes} when the parts of $\lambda$ have size $\leq 2$.
 Problem~\ref{prob:other-conjugacy-classes} is likely to be extremely difficult in general. Indeed, for $p > 0$ define a symmetric function $L_p \in \Lambda_p$ by 
\begin{equation}
    L_p := \sum_{p \, \mid \, \maj(T)} s_{\lambda(T)}
\end{equation}
where 
\begin{itemize}
    \item the {\em major index} $\maj(T)$ of a standard tableau $T$ with $p$ boxes is the sum over all $1 \leq i \leq p-1$ such that $i$ appears in a higher row of $T$ than $i+1$, 
    \item the sum is over all standard tableaux $T$ with $p$ boxes with major index divisible by $p$, and
    \item $\lambda(T) \vdash p$ is the shape of the tableau $T$.
\end{itemize}
The symmetric function $L_p$ is the Frobenius image of the coset module $\CC[\symm_p/C_p]$ where $C_p = \langle (1,\dots,p)\rangle \subseteq \symm_p$. It follows that if $\lambda$ has part multiplicities  $\lambda = (1^{a_1} 2^{a_2} 3^{a_3} \cdots )$ we have
\begin{equation}
    \Frob(\CC[\KKK_\lambda]) = \prod_{p \, \geq \, 1} s_{a_p}[L_p].
\end{equation}
The Schur expansion of $s_{a_p}[L_p]$ is not known in general, so the Schur $\grFrob(R(\KKK_\lambda);q)$ is likely beyond current technology. There is a combinatorial monomial expansion of $s_{a_p}[L_p]$ defined in terms of necklaces. It may be interesting to see if there is a nice monomial expansion of $\grFrob(R(\KKK_\lambda);q)$ involving a $q$-statistic on necklaces.

Another open problem concerns log-concavity. Recall that a sequence $(a_1, \dots, a_m)$ of positive real numbers is {\em log-concave} if for all $1 < i < m$ we have $a_i^2 \geq a_{i-1} a_{i+1}$. Chen conjectured \cite{Chen} that the sequenece $(a_{n,1}, \dots, a_{n,n})$ is log-concave, where $a_{n,k}$ counts permutations in $\symm_n$ whose longest increasing subsequence has length $k$. B\'ona, Lackner, and Sagan conjectured \cite{BLS} that $(b_{n,1}, \dots, b_{n,n})$ is log-concave where $b_{n,k}$ counts involutions in $\symm_n$ with longest decreasing subsequence of length $k$. Both of these conjectures are open as of this writing. In terms of orbit harmonics, Chen's conjecture states that the Hilbert series of the permutation matrix orbit harmonics ring $R(\symm_n)$ is log-concave.

One way to decorate the notion of log-concavity is to incorporate equivariance. If $G$ is a group, a sequence $(V_1, \dots, V_m)$ of $G$-modules is {\em  $G$-log-concave} if for all $1 < i < m$ there exists a $G$-equivariant embedding $V_{i-1} \otimes V_{i+1} \hookrightarrow V_i \otimes V_i$. Here $G$ acts diagonally on the tensor products $V_{i-1} \otimes V_{i+1}$ and $V_i \otimes V_i$, i.e. $g \cdot (v \otimes v') = (g \cdot v) \otimes (g \cdot v')$. Finally, a graded $G$-module $V = \bigoplus_{d = 0}^m V_d$ is called  $G$-log-concave if the sequence $(V_0, V_1, \dots, V_m)$ of graded pieces has this property. Equivariant log-concavity has attracted increasing attention in recent years \cite{GX, KR, Li, NR}.

The orbit harmonics ring $R(\symm_n)$ for the full set of permutation matrices carries an action of the product group $\symm_n \times \symm_n$; Rhoades conjectured \cite{RhoadesViennot} that $R(\symm_n)$ is  $(\symm_n \times \symm_n)$-log-concave. We conjecture that our modules are also equivariantly log-concave.

\begin{conjecture}
    \label{conj:equivariantly-log-concave}
    The graded $\symm_n$-modules $R(\MMM_n)$ and $R(\MMM_{n,a})$ are $\symm_n$-log-concave.
\end{conjecture}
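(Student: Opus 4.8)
We sketch an approach, beginning with $R(\MMM_n)$.  By the isomorphism~\eqref{eqn:identification}, the degree-$d$ component $R(\MMM_n)_d$ is, as an $\symm_n$-module, the permutation module $\CC[\MMM_{n,n-2d}]$ on the matchings of $[n]$ with exactly $d$ matched pairs.  Since the tensor product of permutation modules is the permutation module on the Cartesian product of the underlying $\symm_n$-sets, the embeddings demanded by $\symm_n$-log-concavity,
\[
R(\MMM_n)_{d-1}\otimes R(\MMM_n)_{d+1}\ \hookrightarrow\ R(\MMM_n)_d\otimes R(\MMM_n)_d \qquad\bigl(1\le d\le\lfloor n/2\rfloor-1\bigr),
\]
are exactly embeddings of permutation modules $\CC[\MMM_{n,n-2d+2}\times\MMM_{n,n-2d-2}]\hookrightarrow\CC[\MMM_{n,n-2d}\times\MMM_{n,n-2d}]$.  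By semisimplicity of $\CC[\symm_n]$, such an embedding exists if and only if, writing $\chi_{n,d}$ for the permutation character of $\CC[\MMM_{n,n-2d}]$, the class function $\chi_{n,d}^2-\chi_{n,d-1}\chi_{n,d+1}$ is an effective character, i.e. a nonnegative integer combination of irreducibles.  The plan is to prove this by constructing an explicit $\symm_n$-equivariant injective linear map realizing the difference, built from the symmetric-difference structure of pairs of matchings: given a pair $(M,N)$ with $|M|=d-1$ and $|N|=d+1$, the edge sets $M\setminus N$ and $N\setminus M$ form a disjoint union of alternating paths and even cycles carrying exactly two more $N$-edges than $M$-edges, and one ``rebalances'' by sliding a single matched pair along these components so as to land in $\MMM_{n,n-2d}\times\MMM_{n,n-2d}$.

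A natural first step is to verify the pointwise inequality $\chi_{n,d}(w)^2\ge\chi_{n,d-1}(w)\,\chi_{n,d+1}(w)$ for every $w\in\symm_n$.  The value $\chi_{n,d}(w)$ counts $w$-invariant matchings of $[n]$ with $d$ matched pairs; as $d$ varies this is, after reindexing, the matching-generating sequence of a weighted multigraph built from the cycles of $w$ (with loops recording ``diametral'' matchings inside even cycles), so pointwise log-concavity should follow from the Heilmann--Lieb real-rootedness theorem, or an extension of it allowing loops and edge weights; in any case it can be checked in small cases.  This establishes that the candidate character difference is pointwise nonnegative — a necessary condition and strong numerical evidence.

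The essential obstacle is upgrading pointwise nonnegativity to effectivity of $\chi_{n,d}^2-\chi_{n,d-1}\chi_{n,d+1}$, since $\chi^\lambda(w)$ is not sign-definite and the relevant multiplicities are Kronecker-type coefficients that resist direct evaluation.  It also seems unlikely that a ``monomial'' realizing map — one sending each basis pair of matchings to a single basis pair — will suffice in general, as this would force a compatibility between the $\symm_n$-orbit types of $\MMM_{n,n-2d+2}\times\MMM_{n,n-2d-2}$ and $\MMM_{n,n-2d}\times\MMM_{n,n-2d}$ that need not hold.  Thus the rebalancing procedure above should be implemented as an honest linear combination — summing over all rebalanced pairs produced along the symmetric-difference components — and arranged so that the resulting columns form a block-triangular, hence full-rank, system.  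The delicate point is to make the rebalancing canonical, i.e. independent of the labels $1,\dots,n$, on the even-cycle components, where the two $2$-colorings of a cycle create a genuine ambiguity; summing over both colorings of each cycle with a sign chosen so that the ambiguous contributions cancel is one promising device.  A more structural alternative would be an equivariant Hard Lefschetz / Hodge--Riemann package for a suitable graded $\symm_n$-algebra refining $R(\MMM_n)$; however $R(\MMM_n)$ itself is not Gorenstein, so the correct ambient object would first have to be identified.

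Finally, for $R(\MMM_{n,a})$ with $a<n$ (the case $a=n$ being the trivial module concentrated in degree $0$, and the fixed-point-free case $a=0$ included here), the graded pieces are no longer permutation modules, so the reduction above is unavailable.  Here one would start from Theorem~\ref{thm:conjugacy-module-character} together with Lemma~\ref{lem:module-transfer}, which present $\Frob(R(\MMM_{n,a})_d)$ as the first-row truncation $\{\Frob(R(\MMM_{n,n-2d})_d)\}_{\lambda_1\le n-2d+a}$ of a permutation character already analyzed, and from the $\symm_n$-module surjections along the filtration~\eqref{eqn:filtration} furnished by Lemma~\ref{lem:chain-of-surjections}.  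The hope is that the equivariant embeddings constructed for $R(\MMM_n)$ are compatible with these truncations and descend along the surjections, yielding the embeddings for $R(\MMM_{n,a})$.  The reason we state this only as a conjecture is that the truncation operator $\{-\}_{\lambda_1\le c}$ does not interact simply with Kronecker products: even granting the result for $R(\MMM_n)$, transferring it to $R(\MMM_{n,a})$ appears to require a genuinely new argument adapted to the truncated modules rather than a formal consequence.
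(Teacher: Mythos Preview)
The statement you are attempting is a \emph{conjecture} in the paper, not a theorem: the authors do not prove it and report only a computer verification for $n \leq 15$. There is therefore no ``paper's own proof'' to compare against, and any correct proof would constitute a new result.

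Your proposal is not a proof but a reasonable research outline with several explicitly acknowledged gaps. The reduction for $R(\MMM_n)$ to effectivity of the virtual character $\chi_{n,d}^2 - \chi_{n,d-1}\chi_{n,d+1}$ is correct and well-motivated, and the Heilmann--Lieb idea for pointwise nonnegativity is a sensible first check. However, you yourself identify the essential obstruction: pointwise nonnegativity of a virtual character does not imply effectivity, and the ``rebalancing'' map is described only heuristically, with the even-cycle ambiguity left unresolved. The phrases ``should follow,'' ``one promising device,'' and ``the hope is that'' mark precisely the places where the argument is incomplete. For $R(\MMM_{n,a})$ you go further and concede outright that ``transferring it \dots\ appears to require a genuinely new argument,'' which is an honest assessment but means no proof is offered for that case at all. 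In short: the strategy is plausible and the difficulties you flag are the real ones, but as written this is a proposal for future work, not a proof, and the conjecture remains open.
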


Conjecture~\ref{conj:equivariantly-log-concave} has been checked for $n \leq 15$. In particular, Conjecture~\ref{conj:equivariantly-log-concave} would imply that the graded rings in question have Hilbert series with log-concave coefficient sequences. It is not hard to check this non-equivariant log-concavity in the case of $R(\MMM_n)$ using Corollary~\ref{cor:matching-hilb}.

Another possible direction for future studies is to extend the results to colored permutation groups. Let $n,r$ be positive integers, the {\em $r$-colored permutation group} $\symm_{n,r}$ is the wreath product $\ZZ_r \wr \symm_n$, where $\ZZ_r$ is the cyclic group of size $r$. In $\Mat_{n \times n}(\CC)$, we have
\begin{equation}
    \symm_{n,r} = \left\{ A \in \Mat_{n \times n}(\CC) \,:\, \begin{array}{c} \text{$A$ has exactly one nonzero entry in each row and column,} \\ \text{and nonzero entries of $A$ are $r^{th}$ roots-of-unity}  \end{array} \right\}.
\end{equation}
In particular, the group $\symm_{n,2}$ is the type $B$ Coxeter group of signed permutation matrices. We define 
\begin{equation}\MMM_n^r := \{ w \in \symm_{n,r} \,:\, w^2=1 \} \subseteq \Mat_{n \times n}(\CC).
\end{equation} Note that $\MMM_n^r$ is the collection of Hermitian matrices in $\symm_{n,r}$. 
\begin{problem}\label{prob: colored matching}
    Find the graded $\symm_{n,r}$-structure of $R(\MMM_n^r)$.
\end{problem}
More generally, conjugacy classes in $\symm_{n,r}$ are labelled by {\em $r$-partitions} $\bl = (\lambda^0, \lambda^1, \dots , \lambda^{r-1})$, where each $\lambda^i$ is a partition and $\sum_{i=0}^{r-1} |\lambda^i| = n$. Let
\begin{equation}
    \KKK_{\bl} := \{w \in \symm_{n,r} : w \text{ has cycle type } \bl\}
\end{equation}
where the cycle type of a colored permutation is explicitly
described in Macdonald's book \cite[Cpt. I, Appendix B]{Macdonald}. Regarding $\KKK_{\bl}$ as matrices in $\Mat_{n \times n}(\CC)$, we extend Problem~\ref{prob: colored matching} as follows.
\begin{problem}\label{prob: colored cycle type}
    For an $r$-partition $\bl$ of $n$, calculate the $\symm_{n,r}$ structure of $R(\KKK_{\bl})$.
\end{problem}
The locus $\MMM_n^r$ in Problem~\ref{prob: colored matching} is the disjoint union of the $\KKK_{\bl}$'s where the parts of $\lambda^0$ have size $\leq 2$ and the parts of all other $\lambda^i$'s have size $\leq 1$. Similar to Problem~\ref{prob:other-conjugacy-classes}, it is expected that Problem~\ref{prob: colored cycle type} will be extremely difficult in general. Problem~\ref{prob: colored cycle type} may be more approachable for conjugacy classes of involutions.

\section{Acknowledgements}

B. Rhoades was partially supported by NSF Grant DMS-2246846.

\end{document}